\documentclass[reqno,11pt,letterpaper]{amsart}

\usepackage[marginratio=1:1, total={6in, 8.5in}]{geometry}

\usepackage[utf8]{inputenc}

\usepackage{amsmath}
\usepackage{amssymb}
\usepackage{amsthm}
\usepackage{mathtools}
\usepackage{graphicx}
\usepackage{comment}
\usepackage[title]{appendix}
\usepackage{hyperref}
\usepackage{csquotes}
\usepackage{enumitem}
\setenumerate[0]{label=(\roman*)}

\newtheorem{thm}{Theorem}[section]
\newtheorem{prop}[thm]{Proposition}
\newtheorem{lemma}[thm]{Lemma}

\theoremstyle{definition}

\theoremstyle{remark}
\newtheorem{rmk}[thm]{Remark}

\newcommand{\R}{\mathbb{R}}
\newcommand{\C}{\mathbb{C}}
\newcommand{\N}{\mathbb{N}}

\newcommand{\A}{\mathcal{A}}

\newcommand{\I}{\mathcal{I}}
\newcommand{\ind}{\mathfrak{i}}
\newcommand{\V}{\mathcal{V}}
\newcommand{\W}{\mathcal{W}}
\newcommand{\M}{\mathcal{M}}
\newcommand{\bigO}{\mathcal{O}}
\newcommand{\lilO}{o}
\newcommand{\Sph}{\mathbb{S}}
\newcommand{\D}{\mathcal{D}'}
\newcommand{\h}{\hbar}
\newcommand{\p}{\partial}

\newcommand{\brac}[1]{\langle #1 \rangle}
\newcommand{\pair}[2]{\bigl\langle #1,\, #2 \bigr\rangle}
\newcommand{\WF}{\operatorname{WF}'}

\newcommand{\hor}{\textnormal{(hor)}}
\newcommand{\bdf}{\rho}
\newcommand{\scf}{\textnormal{scf}}
\newcommand{\tf}{\textnormal{tf}}
\newcommand{\zf}{\textnormal{zf}}
\newcommand{\scb}{\textnormal{sc-b}}
\newcommand{\hscb}{{\hbar,\textnormal{sc-b}}}
\newcommand{\scbnorm}{{\textnormal{sc-b},h,\sigma}}
\DeclareMathOperator{\supp}{supp}
\DeclareMathOperator{\Ell}{Ell}

\DeclareMathOperator{\Diff}{Diff}
\DeclareMathOperator{\Op}{Op}
\DeclareMathOperator{\QNM}{QNM}

\usepackage[backend=biber, style=alphabetic, sorting=nyt, giveninits=true, isbn=false, url=false, doi=false, date=year, maxalphanames=5, maxbibnames=99]{biblatex}
\title[Schwarzschild QNM]{Semiclassical estimates near threshold energies and resonance counting on Schwarzschild black holes}

\author{Thomas Stucker}
\address{Department of Mathematics, ETH Zurich, R\"amistrasse 101, 8092, Z\"urich, Switzerland}
\email{thomas.stucker@math.ethz.ch}

\begin{document}

\begin{abstract}
We prove a Weyl law for the number of quasinormal modes (QNM) of a Schwarzschild black hole contained in a sector below the real axis.
This requires introducing a new pseudodifferential operator calculus tailored to the study of semiclassical spectral problems near threshold energies. Elliptic theory in this calculus can be combined with the method of complex scaling to give uniform resolvent estimates near zero energy for operators that behave at infinity like a semiclassical Schrödinger operator with a repulsive inverse-square potential. Applied to the Regge-Wheeler potential, our methods imply the absence of high angular momentum QNM from a disc whose radius grows linearly with the angular momentum. Together with the asymptotic description of Schwarzschild QNM recently obtained by Hitrik and Zworski, this shows that the number of QNM contained in a small sector below the real axis and with modulus bounded by $\lambda$ grows as $C\lambda^3$. We also study the effect of cutting off the Schwarzschild resolvent away from the event horizon and show that such a cutoff does not lead to any pole cancellations.
\end{abstract}


\maketitle

\tableofcontents

\section{Introduction}

The main goal of this paper is to prove a Weyl law for the quasinormal modes of a Schwarzschild black hole. Quasinormal modes (QNM) can be thought of as the resonances of the black hole. They occur at a discrete set of complex frequencies determined by the spacetime geometry. These correspond to exponentially damped oscillating solutions to the linear wave equation on the stationary black hole background. The study of QNM on black hole spacetimes has a long history in the physics literature, see for instance the review articles \cite{berti_cardoso,kokkotas_physicsQNM}.

Mathematically, quasinormal modes can be understood as the resonances of the spectral family $P_g(\sigma)$, that is, poles of the meromorphically continued resolvent operator $P_g(\sigma)^{-1}$. Here, $P_g(\sigma) = e^{i\sigma t}\Box_ge^{-i\sigma t}$ is the Fourier transform in time of the wave operator for the corresponding stationary metric $g$.
The theory of QNM is particularly well understood in the context of asymptotically de Sitter black holes (positive cosmological constant), see for instance \cite{dyatlov_QNM_KdS_2,dyatlov_QNM_KdS,dyatlov_QNM_quantization,vasy_KdS}. 
More recently, a rigorous characterization of QNM for the asymptotically flat family of Kerr black holes was obtained in \cite{gajic_warnick_QNM_Kerr,stucker_QNM_Kerr}.
They correspond to poles of the cutoff resolvent $\chi P_g(\sigma)^{-1}\chi$, where $\chi$ cuts off away from asymptotically flat infinity.

In the spherically symmetric Schwarzschild case, this construction was already achieved in \cite{bachelot,saBarreto_zworski} using slightly different methods.
Thanks to spherical symmetry, one can decompose the spectral family into spherical harmonics. Working with the tortoise coordinate, the resulting operator can be transformed into the spectral family of a Schr\"odinger operator on $\R$:
\begin{equation}
\label{regge-wheeler_op_intro}
    P_\ell(\sigma) = D_x^2 + V_\ell(x) - \sigma^2,
\end{equation}
where $V_\ell(x)$ is the Regge-Wheeler potential. Note that $x\to\infty$ corresponds to asymptotically flat infinity, whereas $x\to-\infty$ corresponds to the black hole horizon. The resonances of this Schr\"odinger operator provide a notion of QNM at fixed angular momentum $\ell$. 

A particularly salient feature of the Schwarzschild geometry is the presence of trapped null geodesics forming a photon sphere around the black hole. Accordingly, in the large angular momentum limit, the Schwarzschild spectral family at angular momentum $\ell$ exhibits trapping at energy $\sigma = \frac{\sqrt{\ell(\ell+1)}}{\sqrt{27}m}$, where $m$ is the mass of the black hole. This is reflected in the Regge-Wheeler potential by the presence of a unique critical point, a non-degenerate maximum. Recently, Hitrik and Zworski \cite{hitrik_zworski} gave a precise description, asymptotically in $\ell\to\infty$, of the quasinormal modes generated by the photon sphere in a full sector $\arg(\sigma) > -\theta$ below the real axis. They approach a distorted lattice in the lower half-plane. This improves on earlier results of S\'a Barreto-Zworski \cite{saBarreto_zworski} describing QNM asymptotically in strips $\Im(\sigma)>-C$. 

The asymptotic distribution of QNM, in particular, implies lower bounds on the resonance counting function: the number of QNM in the sector $\arg(\sigma) > -\theta$ with modulus $|\sigma|\leq \lambda$ grows at least as fast as $C\lambda^3$.
In the Schwarzschild-de Sitter case, where asymptotically flat infinity is replaced by a cosmological horizon, uniform resolvent estimates at the non-trapping energies show that the result of Hitrik-Zworski covers all but finitely many resonances, i.e.\ essentially all QNM are generated by the photon sphere. In this way, one can improve on the lower bound to obtain the exact order of growth of the resonance counting function, see \cite[Thm.\ 1]{hitrik_zworski}. Here, we will extend this result to the Schwarzschild case:

\begin{thm}
\label{thm_counting}
    For a small $\theta > 0$, denote by 
    \[N_\theta(\lambda) = \bigl|\QNM\cap\bigl\{|\sigma|\leq\lambda,\,\arg(\sigma)\in [-\theta,0]\bigr\}\bigr|\]
    the number of quasinormal modes -- counted with multiplicty -- of a Schwarzschild black hole of mass $m$ contained in a sector of angle $\theta$ below the positive real axis and with modulus bounded by $\lambda$. Then
    \[N_\theta(\lambda) = C\lambda^3 + \lilO(\lambda^3), \quad\text{as }\, \lambda\to\infty,\]
    where the constant $C = C_{\theta,m}$ is given by $54\sqrt{3}m^3\theta$ up to errors of order $\theta^2$.
\end{thm}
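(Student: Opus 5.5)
The plan is to reduce the count to one over angular momenta and then combine two inputs. The first is the Hitrik--Zworski asymptotics for QNM generated by the photon sphere. The second is a new uniform statement: for large $\ell$, $P_\ell(\sigma)$ has no resonances in a region $|\sigma|\le c\,\ell$ of the sector other than those near the photon sphere energy. Concretely, I will show that all QNM of $P_\ell$ in $\{\arg\sigma\in[-\theta,0]\}$ lie in a region where the Hitrik--Zworski description applies, up to $O(1)$ exceptional ones per $\ell$ at most.

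\textbf{Step 1: separation of variables.} By spherical symmetry, $\QNM=\bigcup_\ell \QNM(P_\ell)$, and each $\ell$ contributes with multiplicity $2\ell+1$. Rescale by $h=(\ell(\ell+1))^{-1/2}$ and $\sigma = z/h$. Then $h^2P_\ell(\sigma)=(hD_x)^2+h^2V_\ell - z^2$ is a semiclassical Schrödinger operator. Its potential $h^2V_\ell\to V_0(x)=(1-2m/r)/r^2$ has a nondegenerate maximum $1/(27m^2)$. It decays like $x^{-2}$ at $+\infty$, a repulsive inverse-square tail, and exponentially at $-\infty$.

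\textbf{Step 2: uniform resolvent bounds away from the top of the barrier.} In the sector, fix a small $\theta$ and consider $z$ with $|z|\le C$. Energies $z^2$ away from $1/(27m^2)$ are non-trapping. For $|z|\ge \delta$ I use complex scaling near $x=+\infty$ and the black box/horizon treatment at $-\infty$, with standard semiclassical propagation, to get invertibility. The delicate part is $|z|\to 0$, and in particular the disc $|\sigma|\le c\ell$ (i.e.\ $|z|\le c$). Here I will invoke the paper's threshold calculus: it gives uniform estimates near zero energy for operators that behave at infinity like $(hD_x)^2+h^2\ell^2/x^2$. The expected output is that no QNM of $P_\ell$ lie in $\{|\sigma|\le c\ell\}$ for large $\ell$. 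Combined with the non-trapping bounds, this yields: for large $\ell$, every QNM in the sector with $|\sigma|\le C\ell$ lies within $o(\ell)$ of the Hitrik--Zworski lattice. For $|\sigma|\ge C\ell$ (large $|z|$) we are above the barrier and there are no QNM, again by non-trapping complex scaling. This step, the uniformity near $z=0$ where the inverse-square tail and the low energy interact, is the main obstacle.

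\textbf{Step 3: counting.} Hitrik--Zworski describe the QNM in the sector as $\sigma\approx \frac{1}{\sqrt{27}m}\bigl(\ell+\tfrac12 - i(k+\tfrac12)\bigr)+\dots$, $k\ge 0$, valid for $k\le c'\ell$. They include the curvature of the lattice in a full sector, with an $O(\theta^2)$ distortion. Count pairs $(\ell,k)$ with multiplicity $2\ell+1$ subject to $|\sigma|\le\lambda$ and $k\lesssim \theta\ell$. In the linear approximation, with $\lambda'=\sqrt{27}m\lambda$, this is
\[
N_\theta(\lambda)\approx \sum_{\ell\le \lambda'} 2\ell\cdot\theta\ell\approx \tfrac{2}{3}\theta\lambda'^3 = \tfrac23\theta\,(27)^{3/2}m^3\lambda^3=54\sqrt3\,m^3\theta\lambda^3.
\]
The exact constant $C_{\theta,m}$ comes from the exact sector region in the Hitrik--Zworski lattice, and it differs from this by $O(\theta^2)$. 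Boundary effects near the rays and the finitely many small $\ell$ contribute $o(\lambda^3)$, which gives the stated asymptotic.
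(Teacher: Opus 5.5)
Your Steps 1 and 3 are fine and match the paper. Once every QNM at angular momentum $\ell\ge\ell_0$ in the sector is known to be one of the Hitrik--Zworski points $\lambda_{\ell,n}$, the $C\lambda^3$ law and its constant follow as in the Schwarzschild--de Sitter case. Their Thm.~2 already covers all of $|\sigma|\ge\epsilon\ell$, so your separate non-trapping analysis for $|z|\ge\delta$ is not needed.

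The gap is in Step 2, for $|\sigma|\le c\ell$. The threshold (sc-b) calculus handles only the end $x\to+\infty$, where $h^2V_\ell\sim x^{-2}$. That is exactly what Lemma~\ref{lemma_infty_estimates} gives, uniformly down to $\sigma\to 0$. At the horizon end $x\to-\infty$ the potential decays like $e^{x/2m}$, so assumption \ref{assumption_asymptotics} fails. The sc-b calculus does not resolve the degeneration of $\xi^2+V_0(x)-z^2$ there. A fixed complex scaling does not rescue this either. Along $e^{i\beta}\R$ the modulus of $V_0(e^{i\beta}x)$ decays exponentially while its phase keeps rotating. On the stretch where $|z|^2\ll|V_0|\ll 1$, whose length grows like $\log(1/|z|)$, the scaled symbol therefore acquires zeros once $|z|$ is small. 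Your proposal has no argument for this end in the low-energy range, and the horizon treatment you sketch for $|z|\ge\delta$ does not extend down.

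The paper closes this gap with two further ingredients.
\begin{enumerate}
\item For $K_0\le|\sigma|\le c_0\ell$ it uses Zworski's large-parameter scaling \cite{zworski_large_parameter_scaling}, with semiclassical parameter $|\sigma|^{-1}$ and large parameter $\lambda=\sqrt{\ell(\ell+1)}/|\sigma|$ (Lemma~\ref{lemma_horizon_estimates}). The contour stays real until $x\approx-2m\log\lambda^2$, and the method needs a small scaling angle and a small sector. This estimate is glued to the sc-b estimate at $+\infty$ on a single $(\sigma,\ell)$-dependent contour (Prop.~\ref{prop_annulus_exclusion}).
\item The disc $|\sigma|\le K_0$ is not semiclassical in either sense and needs genuinely low-energy input. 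From \cite{stucker_QNM_Kerr}, the full Schwarzschild resolvent in horizon-penetrating coordinates has uniform bounds near $\sigma=0$ and only finitely many finite-rank poles in $|\sigma|\le K_0$. Lemma~\ref{lemma_resolvent_relation} transfers this to the Regge--Wheeler cutoff resolvent, so only finitely many $\ell$ can have QNM there.
\end{enumerate}

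This is not a technicality. A QNM at bounded $|\sigma|$ for infinitely many $\ell$ would make $N_\theta(\lambda)$ infinite. For the same reason, your fallback of ``$O(1)$ exceptional QNM per $\ell$'' is harmless only if those QNM sit at $|\sigma|\gtrsim\ell$. Some such input is also needed to rule out accumulation at $\sigma=0$ for the finitely many small $\ell$ that you discard as $o(\lambda^3)$.
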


Notice that the order of growth of the counting function coincides with that given by Weyl's law for the number of $|\sigma|\leq\lambda$, counted with multiplicity, such that $\sigma^2$ is an eigenvalue of the Laplacian on a compact three-dimensional Riemannian manifold. In the context of scattering theory, similar Weyl laws for the resonance counting function have been established in certain situations. In fact, there is a rich literature on this subject. We mention the pioneering work of Melrose \cite{melrose_res_counting}, who proved polynomial upper bounds on the number of resonances for scattering by compactly supported potentials in odd-dimensional $\R^n$. This was improved by Zworski \cite{zworski_res_counting} to a sharp upper bound of order $\lambda^n$. In dimension one, Zworski also proved precise asymptotics for the resonance counting function \cite{zworski_res_counting_1D}. Vodev \cite{vodev_res_counting_general,vodev_res_counting_even} extended the $\bigO(\lambda^n)$ upper bound to even dimensions and more general compactly supported perturbations of the Laplacian. Hislop and Christiansen \cite{christiansen_hislop_res_counting_generic}  then showed that the $\lambda^n$ order of growth is attained for generic potentials of compact support.
See the monograph of Dyatlov-Zworski \cite{dyatlov_zworski_scattering_book} for further references to the literature.

The study of the asymptotic distribution of black hole quasinormal modes was initiated by S\'a Barreto-Zworski \cite{saBarreto_zworski}, who showed that the high energy QNM of Schwarzschild and Schwarzschild-de Sitter black holes are well approximated by a lattice in a strip below the real axis. A similar asymptotic description in strips of QNM for slowly rotating Kerr-de Sitter black holes was provided by Dyatlov in \cite{dyatlov_QNM_quantization}. Dyatlov also showed how an $r$-normally hyperbolic trapped set leads to a Weyl law for resonances in strips \cite{dyatlov_resonance_projector}, and applied this to slowly rotating Kerr-de Sitter black holes in \cite{dyatlov_ringdown}. More recently, progress has been made towards understanding the asymptotic distribution of QNM beyond strips. J\'ez\'equel \cite{jezequel_res_counting} proved an upper bound of order $\lambda^3$ for the number of Schwarzschild-de Sitter QNM contained in a disc of radius $\lambda$. This was complemented by lower bounds in a sector below the real axis proved by Hitrik-Zworski \cite{hitrik_zworski} for the Schwarzschild and Schwarzschild-de Sitter black holes.

The additional difficulty in establishing upper bounds for the number of resonances on Schwarzschild compared to Schwarzschild-de Sitter is related to the low energy behavior of the resolvent. In the de Sitter setting, the resolvent possesses a meromorphic continuation to the entire complex plane with a discrete set of poles. In contrast, for asymptotically flat black holes the resolvent exhibits a logarithmic singularity at zero energy. Thus, the discreteness of its poles is not automatic and there could, in principle, be infinitely many quasinormal modes accumulating at the origin. That this phenomenon does not occur for the Kerr, and hence Schwarzschild, black hole was established in \cite{stucker_QNM_Kerr}, where uniform resolvent estimates were proved in a small disc around the origin. In particular, this implies that there are no Kerr QNM of modulus $|\sigma|<\delta$ for some small $\delta>0$. Note that this result concerns poles of the full resolvent. On the other hand, Hitrik and Zworski examine the resolvent after decomposition into spherical harmonics and investigate the location of poles at a fixed angular momentum $\ell$. Their result captures all QNM at high angular momentum $\ell$, which satisfy $|\sigma|>\epsilon\ell$ for a small $\epsilon>0$, see \cite[Thm.\ 2]{hitrik_zworski}. Thus, there remains a gap between the exclusion of QNM from a disc with fixed radius and the region outside a disc of radius growing linearly with $\ell$, where the description of Hitrik-Zworski applies. We will close this gap by proving uniform estimates in $|\sigma|\leq c_0\ell$ for the resolvent of the Regge-Wheeler operator in \eqref{regge-wheeler_op_intro}.

\begin{thm}
\label{thm_exclusion}
    There exist $c_0,\ell_0,\theta_0 > 0$ such that the Schwarzschild black hole has no quasinormal modes at angular momentum $\ell\geq\ell_0$ within the region 
    \[\bigl\{\sigma\in\C\setminus\{0\},\, |\sigma|\leq c_0\ell,\, \arg(\sigma) \in [-\theta_0,0]\bigr\}.\]
    In fact, for a fixed cutoff function $\chi \in C_c^\infty(\R)$, the cutoff resolvent for the Regge-Wheeler operator is bounded uniformly in $\ell\geq\ell_0$ and $\sigma$ in the region above:
    \[\|\chi P_\ell(\sigma)^{-1}\chi u\|_{L^2(\R)} \leq C\|u\|_{L^2(\R)}, \quad \forall\, u\in L^2(\R).\]
\end{thm}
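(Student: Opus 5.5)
Rescale semiclassically with $h = (\ell(\ell+1))^{-1/2}$ and $z = h\sigma$, so that $|z|\le c_0'$ and $\arg z\in[-\theta_0,0]$. Then
\[
h^2P_\ell(\sigma)=P_h(z)= (hD_x)^2 + W(x) + h^2\tilde V(x) - z^2,
\]
where $W(x)=(1-2m/r)/r^2$ and $\tilde V=(1-2m/r)2m/r^3$, with $r=r(x)$ given by the tortoise coordinate. The bound to prove becomes
\[
\|\chi P_h(z)^{-1}\chi\|\le Ch^{-2}.
\]
The structure of $W$ drives the argument. It is positive with a single nondegenerate maximum $W_{\max}=1/(27m^2)$. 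As $x\to-\infty$ (the horizon) it decays exponentially, like $e^{x/2m}$. As $x\to+\infty$ it behaves like $x^{-2}$, a repulsive inverse-square potential. For $|z|^2\le c_0'^2\ll W_{\max}$ the energy $z^2$ therefore lies far below the barrier top, so trapping plays no role. Instead the problem becomes a threshold problem at both ends, since the classically allowed regions $\{W<\Re z^2\}$ recede to infinity as $z\to0$. I would split into two regimes: $|z|\ge h^{1-\epsilon}$ (more precisely $|z|\ge Ch$) and $|z|\lesssim h$. For fixed-size $|\sigma|$ the second regime should reduce to the known low-energy estimates from \cite{stucker_QNM_Kerr}, although uniformity in $\ell$ is still needed there.

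\emph{Near the horizon.} I would use the exponential decay of $W$ to complex scale (or deform contours) in $x\to-\infty$. Equivalently, I would return to the $r$ coordinate near $r=2m$, where $P$ becomes a regular-singular operator whose outgoing solutions are smooth in $r$. The model $(hD_x)^2-z^2$ on the region where $W\ll|z|^2$ admits an explicit outgoing resolvent. I would glue this to an elliptic parametrix in the barrier region, where $W-\Re z^2\ge c>0$, and here I would use a nonuniform semiclassical calculus at scale $|x|\sim\log(1/|z|)$.

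\emph{At infinity.} This is the threshold part. I would rescale $x=y/|z|$. Then
\[
P_h(z)=|z|^2\bigl((\tilde h D_y)^2+|z|^{-2}W(y/|z|)-\hat z^2\bigr),\qquad \tilde h = h/|z|,
\]
and $|z|^{-2}W(y/|z|)\approx y^{-2}$ for $y\gtrsim|z|$. The model operator is $(\tilde hD_y)^2 + y^{-2}-\hat z^2$ with $|\hat z|=1$, a nontrapping semiclassical Schrödinger operator with repulsive inverse-square potential. For $\tilde h\le 1$ I would complex scale in $y$ at large $y$ to obtain a Fredholm estimate. The uniform-in-$(h,z)$ analysis where $y\to0$ meets the barrier region requires a calculus joining the scales $x\sim1$ (a "b"/cone-type behaviour in $y$) and $y\sim1$ (scattering behaviour), with $\tilde h$ semiclassical. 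This is exactly the sc-b calculus with parameters $(h,\sigma)$ hinted at by the macros. In it, elliptic estimates hold on the barrier face because $y^{-2}>0$ there. Propagation or positive-commutator estimates along the nontrapped flow handle the scattering face, with complex absorption or complex scaling at infinity. The resulting inverse has norm $O(|z|^{-2}\tilde h^{-1})$ on the relevant weighted spaces. When $\chi$ is compactly supported in $x$, pulling back gives $O(h^{-2})$, because the cutoff only sees the elliptic region, and that is what is needed.

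Finally, I would glue the horizon, barrier and infinity parametrices by a standard partition-of-unity argument. The errors are small because supports separate in the semiclassical or threshold sense, and a Neumann series then gives invertibility, hence no poles. The regime $|z|\lesssim h$ (bounded $\sigma$) I would handle the same way with $\tilde h\gtrsim1$, where the $y$-problem is non-semiclassical but has the positive potential $\ell(\ell+1)y^{-2}$, which dominates uniformly. The main obstacle is the uniform estimate across threshold at infinity, namely building the calculus and proving symbolic plus normal-operator invertibility of the inverse-square model uniformly as $z\to0$. The key inputs are positivity of the repulsive $y^{-2}$ term, which gives b-face ellipticity, and nontrapping of $\eta^2+y^{-2}=1$.
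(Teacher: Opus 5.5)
The gap is the bounded-frequency regime $|\sigma|\le K_0$ (your $|z|\lesssim h$) at large $\ell$. You note that uniformity in $\ell$ is needed when invoking \cite{stucker_QNM_Kerr}, but you never supply it. Your fallback also rests on a scaling error. With $x=y/|z|$ one gets $(hD_x)^2=|z|^2(hD_y)^2$, so $\tilde h=h$, not $h/|z|$. The inverse-square problem is dilation invariant, so the infinity side never loses semiclassicality as $\sigma\to0$. Your claim that for bounded $\sigma$ the potential ``dominates uniformly'' also fails at the turning point $x\approx\sqrt{\ell(\ell+1)}/|\sigma|$, unless that region has been complex scaled.

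The parameter $h/|z|=|\sigma|^{-1}$ is the right one at the \emph{horizon}, not at infinity. Shifting $x'=x-2m\log|z|^2$ turns $(hD_x)^2+W-z^2$ into $|z|^2\bigl((h/|z|)^2D_{x'}^2+Ce^{x'/2m}-\hat z^2\bigr)$. So for bounded $\sigma$ the horizon turning region is a non-semiclassical problem, and your plan does not treat it.

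The paper closes this with a soft argument you are missing:
\begin{enumerate}
\item \cite[Thm.~1.6]{stucker_QNM_Kerr} bounds the full horizon-penetrating cutoff resolvent $\chi P^\hor(\sigma)^{-1}\chi$ on $|\sigma|\le\epsilon$. This bound restricts to every $Y_{\ell,m}$ with the same constant.
\item On the compact set $\epsilon\le|\sigma|\le K_0$, $\arg\sigma\in[-\theta_0,0]$, this resolvent has finitely many poles, each of finite rank, so only finitely many $\ell$ carry poles.
\item For $\ell\ge\ell_0$ the restricted family is therefore holomorphic on a compact set, hence bounded.
\item Lemma~\ref{lemma_resolvent_relation} transfers this bound to $\chi P_\ell(\sigma)^{-1}\chi$.
\end{enumerate}

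Your horizon step for $|\sigma|\ge K_0$ is also underdetermined in a way that matters. The regular-singular alternative (outgoing means smooth at $r=2m$) fails in a sector. The radial-point threshold $s>\frac12-4m\Im\sigma$ grows like $\ell$ when $\Im\sigma\sim-\theta_0c_0\ell$. Gluing an explicit free outgoing resolvent to an elliptic parametrix on $\{W-\Re z^2\ge c\}$ leaves the turning region $W\sim|z|^2$ uncovered. That free resolvent also grows exponentially as $x\to-\infty$ when $\Im\sigma<0$. A fixed scaling contour fails as well, because along it $|\sigma|^{-2}V_\ell\sim\lambda^2e^{z/2m}$, with $\lambda=\sqrt{\ell(\ell+1)}/|\sigma|$, is large and rotating.

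What works is Zworski's contour, which starts the deformation at the $(\sigma,\ell)$-dependent point $x\approx-2m\log(\ell(\ell+1)/|\sigma|^2)$ and uses semiclassical parameter $|\sigma|^{-1}$. This is the real reason a split at $|\sigma|=K_0$ is needed: your split is the right one, but the reason lies at the horizon, not at infinity.

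At infinity the paper needs no propagation estimates. After a fixed-angle scaling the $x^{-2}$ term rotates together with $\xi^2$. Hence the sc-b symbol $e^{-2i\beta}(\xi^2+\rho_\scf^2)-e^{2i\arg\sigma}\rho_\zf^2$ is elliptic uniformly down to $\sigma=0$. Your propagation-plus-late-scaling route there is heavier but viable once you use $\tilde h=h$.
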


\begin{rmk}
    Note that the positivity of the Regge-Wheeler potential $V_\ell$ implies the absence of QNM in the upper half-plane, i.e.\ $\arg(\sigma)\in (0,\pi)$. Moreover, the self-adjointness of $D_x^2 + V_\ell(x)$ leads to symmetry of the cutoff resolvent, and thus QNM, with respect to the imaginary axis: $\chi P_\ell(\sigma)^{-1}\chi = \chi P_\ell(-\Bar{\sigma})^{-1}\chi$. We formulate our results for $\sigma$ in a sector below the positive real axis, but the same statements hold in a corresponding sector below the negative real axis.
\end{rmk}

Following \cite{saBarreto_zworski,hitrik_zworski}, we apply complex scaling to the operator $P_\ell(\sigma)$, see \cite[\S 7]{sjostrand_lectures_resonances} for an introduction to this method. The Regge-Wheeler potential possesses a holomorphic extension to a complex sector around the real axis, see \S \ref{section_regge-wheeler}. Restricting the extended operator to a contour $\Gamma_\beta \subset \C_z$, which coincides with $\R$ for $|z|$ in some large ball and approaches $e^{i\beta}\R$ as $|z|\to\infty$, gives a complex scaled operator
\[P_{\beta,\ell}(\sigma) = P_\ell(\sigma)|_{\Gamma_\beta}, \quad\text{with}\quad P_{\beta,\ell}(\sigma) = e^{-2i\beta}D_x^2 + V_\ell(e^{i\beta}x) - \sigma^2 \,\text{ near infinity}.\]
Note that, in contrast to \cite{stucker_QNM_Kerr}, we are using complex scaling both at asymptotically flat infinity and at the black hole horizon. Quasinormal modes can now be studied in terms of the spectrum of the complex scaled operator. 
Indeed, in $\arg(\sigma)\in(-\beta,\pi-\beta)$, the poles of the meromorphically continued cutoff resolvent correspond to the eigenvalues of $P_{\beta,\ell}(\sigma)$.

The potential grows as $\ell(\ell+1)$. We can transform the large $\ell$ behavior into a semiclassical problem by setting $h = (\ell(\ell+1))^{-\frac{1}{2}}$ and defining
\begin{equation}
\label{semiclassical_rescaing_intro}
    P_\beta(h,\sigma) = h^2P_{\beta,\ell}\Bigl(\frac{\sigma}{h}\Bigr), \quad\text{with }\, P_\beta(h,\sigma) = e^{-2i\beta}(hD_x)^2 + V(h,e^{i\beta}x) - \sigma^2 \,\text{ near infinity}.
\end{equation}
Here, $V(h,z) = h^2V_\ell(z)$ is a rescaling of the Regge-Wheeler potential.
On the real line, $V(0,x)$ has a unique non-degenerate critical point, which leads to normally hyperbolic trapping of the Hamiltonian flow of the semiclassical principal symbol at energy $\sigma = \frac{1}{\sqrt{27}m}$ corresponding to the potential maximum. This trapping generates the distorted lattice of resonances described by Hitrik-Zworski. At all other energies, the Hamiltonian flow is non-trapping. Thus, for non-zero $\sigma$ away from the trapped energy, standard semiclassical methods lead to non-trapping estimates for the complex scaled resolvent in a small sector $\arg(\sigma)>-\theta$ below the real axis and thus to the absence of spectrum for $h$ small enough, see for instance \cite[\S 12.5]{sjostrand_lectures_resonances}.
Undoing the semiclassical rescaling in \eqref{semiclassical_rescaing_intro}, this implies that for any $\delta,\epsilon>0$, we can choose $\theta>0$ small enough and $\ell_0$ large enough so that there are no quasinormal modes at angular momentum $\ell\geq\ell_0$ in
\[\bigl\{\sigma\in\C\setminus\{0\},\, \Re(\sigma)\in \bigl[\epsilon\ell,(\tfrac{1}{\sqrt{27}m}-\delta)\ell\bigr]\cup\bigl[(\tfrac{1}{\sqrt{27}m}+\delta)\ell,\infty\bigr),\, \arg(\sigma)>-\theta\bigr\},\]
see \cite[\S 2.3]{hitrik_zworski}.

The difficulty arises at the threshold energy $\sigma=0$. Because the potential decays at infinity, the semiclassical principal symbol of $P_\beta(h,0)$ is not uniformly elliptic near infinity and more sophisticated methods are needed. In the Schwarzschild-de Sitter case, where $x\to\infty$ corresponds to the cosmological horizon, the potential is exponentially decaying as $|x|\to\infty$. Thus, the approach developed by Zworski in \cite{zworski_large_parameter_scaling} is applicable. This requires using a different semiclassical parameter and a complex scaling contour that depends on $|\sigma|,\ell$. It leads to estimates for $P_{\beta,\ell}(\sigma)^{-1}$ in $K_0\leq |\sigma| \leq \epsilon\ell$, $\arg(\sigma)>-\theta$ for some $K_0>0$ and $\ell$ large enough. Together with the discreteness of the resolvent poles this provides the necessary exclusion of QNM from $|\sigma|\leq\epsilon\ell$ for large $\ell$.

In the Schwarzschild case, we use a local version of Zworski's estimates to deal with the black hole horizon, i.e. $x\to-\infty$, and introduce a new method to obtain elliptic estimates near asymptotically flat infinity, i.e. as $x\to+\infty$. This is based on the development of a new pseudodifferential calculus, which captures the transition from $|\sigma|>0$  to $|\sigma|=0$.

At asymptotically flat infinity, the analytically continued Regge-Wheeler potential decays as $z^{-2}$. Thus, the semiclassical operator in \eqref{semiclassical_rescaing_intro} behaves near infinity as
\[P_\beta(h,\sigma) \sim e^{-2i\beta}(hD_x)^2 + e^{-2i\beta}x^{-2} - \sigma^2.\]
For all fixed $|\sigma|>0$, $\arg(\sigma)\in(-\beta,\pi-\beta)$, this is uniformly elliptic as $x\to\infty$ in the usual semiclassical calculus. However, at $\sigma=0$ this differential operator should really be thought of as a semiclassical b-operator. Indeed, modulo an overall $x^{-2}$ weight,
\[P_\beta(h,0) \sim e^{-2i\beta}((hD_x)^2 + x^{-2}) = e^{-2i\beta}x^{-2}((hxD_x)^2 + ih^2xD_x + 1)\]
is uniformly elliptic in the semiclassical b-calculus, see for instance \cite[\S A.3]{hintz_vasy_nonlinear-stability} for this notion. In order to obtain uniform elliptic estimates, one should thus, in some sense, patch together the usual (or scattering) semiclassical calculus at $|\sigma|>0$ with the semiclassical b-calculus at $|\sigma|=0$.

In \S \ref{section_hscb_calculus}, we introduce a semiclasical version of the scattering-b transition calculus of \cite{guillarmou_hassell}, which is designed for this purpose. The differential operators in this calculus are built from the vector field
\[h\frac{\brac{x}}{1+|\sigma|\brac{x}}D_x,\]
which behaves like $hD_x$ for non-zero $\sigma$ and like $h\brac{x}D_x$ at $\sigma=0$. On the other hand, the coefficients of operators are conormal at infinity, i.e. enjoy regularity under repeated applications of $\brac{x}\p_x$ uniformly in $\sigma$.
Using the framework of scaled bounded geometry developed by Hintz in \cite{hintz_scaled_bounded_geometry}, we obtain a corresponding algebra of pseudodifferential operators along with a well-behaved principal symbol map. This provides microlocal elliptic estimates on certain $(|\sigma|,h)$-dependent Sobolev spaces. We will see that, near infinity, the operator $P_\beta(h,\sigma)$ above defines an elliptic element of this calculus.

The semiclassical scattering-b transition calculus can be applied more generally to semiclassical spectral problems for operators that behave near infinity like a Schr\"odinger operator with a repulsive inverse-square potential. Indeed, let $P_h$ be a second order semiclassical differential operator on $\R^n$, which is non-trapping at zero energy and approaches $h^2\Delta + \mu|x|^{-2}$ as $|x|\to\infty$ with $\mu>0$. Assume further that outside some ball $B_{R_0}$ the coefficients of $P_h$ extend to holomorphic functions in a neighborhood of $e^{i[0,\beta_0]}(R_0,\infty)\Sph^{n-1}\subset\C^n$ for some $\beta_0>0$. Then complex scaling provides a meromorphic continuation of the cutoff resolvent to the sector $\arg(\sigma) > -\beta_0$ below the positive real axis. In \S \ref{section_thresholds}, we prove uniform estimates for the cutoff resolvent of such an operator near the threshold energy $\sigma=0$, see \ref{assumption_elliptic}-\ref{assumption_asymptotics} for the precise assumptions on $P_h$. Note that we formulate our result in terms of a positive scaling angle $\beta_0$, but a corresponding estimate holds if we take $\beta_0<0$, see Remark \ref{rmk_negative_beta}.

\begin{thm}
\label{thm_threshold_estimates}
    Fix a cutoff function $\chi\in C_c^\infty(\R^n)$. There exist $h_0,\sigma_0>0$ such that for any $s\in\R$, $\delta>0$ the cutoff resolvent satisfies the estimate
    \[\|\chi(P_h-\sigma^2)^{-1}\chi f\|_{H^{s}_h(\R^n)} \leq C\|f\|_{H^{s-2}_h(\R^n)}, \quad \forall\, f\in H^{s-2}_h(\R^n),\]
    uniformly in $h\in(0,h_0]$, $\sigma\in\C\setminus\{0\}$ with $|\sigma|\leq\sigma_0$ and $\arg(\sigma)\in[-\beta_0+\delta,\pi-\delta]$.
\end{thm}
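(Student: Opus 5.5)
The plan is to replace $P_h-\sigma^2$ by its complex scaled version $P_{h,\beta}(\sigma)=(P_h-\sigma^2)|_{\Gamma_\beta}$. I would take a scaling angle $\beta\in(\beta_0-\delta/2,\beta_0]$, and choose the contour $\Gamma_\beta$ to agree with $\R^n$ on a ball $B_{R_1}$ with $R_1>R_0$ and $\supp\chi\subset B_{R_1}$. By the standard complex scaling identity, $\chi(P_h-\sigma^2)^{-1}\chi=\chi P_{h,\beta}(\sigma)^{-1}\chi$ for $\arg(\sigma)\in(-\beta,\pi-\beta)$. For $\arg\sigma$ near $\pi-\delta$ I would use a smaller angle (or $\beta=0$ with the physical resolvent) so that $\arg(\sigma)\in[-\beta_0+\delta,\pi-\delta]$ is always strictly inside the allowed range. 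It then suffices to prove
\[
\|u\|_{H^s_h}\le C\|P_{h,\beta}(\sigma)u\|_{H^{s-2}_h}
\]
uniformly for $0<|\sigma|\le\sigma_0$, together with Fredholm index zero, which comes from a deformation in $\sigma$. The estimate on $\Gamma_\beta$ is obtained by gluing an interior estimate and an estimate near infinity.

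\emph{Interior region.} On a large compact set $|x|\le R$, the operator $P_{h,\beta}(\sigma)$ differs from $P_{h,\beta}(0)$ by $\sigma^2$. This is small in norm, so it can be absorbed once $\sigma_0$ is small. The zero energy operator is semiclassically elliptic away from its characteristic set $\{p=0\}$. Non-trapping at zero energy means every bicharacteristic in $\{p=0\}$ escapes to $|x|\ge R$, where the scaling has turned on. There $\Im(e^{2i\beta}p)$ has a definite sign on the characteristic set, so each bicharacteristic enters a region where the operator is elliptic (complex absorption). Standard propagation of singularities plus elliptic estimates in the ordinary semiclassical calculus then control $\chi_R u$ by $P_{h,\beta}(\sigma)u$, plus an error localized in $|x|\ge R/2$ and an $\bigO(h^\infty)$ remainder.

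\emph{Near infinity.} This is the key step. For $|x|\ge R/2$ on the scaled part of the contour the operator is
\[
e^{-2i\beta}\bigl(h^2\Delta+\mu|x|^{-2}\bigr)-\sigma^2+\text{(lower order decaying terms)}.
\]
I would show it is elliptic in the semiclassical scattering-b transition calculus, with symbol computed in the variables $\xi$ dual to $h\brac{x}(1+|\sigma|\brac{x})^{-1}D_x$. After factoring out the weight $\brac{x}^{-2}(1+|\sigma|\brac{x})^2$, the principal symbol is roughly
\[
\frac{e^{-2i\beta}(|\xi|^2+\mu)-\sigma^2\brac{x}^2}{(1+|\sigma|\brac{x})^2}.
\]
This is nonvanishing uniformly: for $|\sigma|\brac{x}\ll1$ because $\mu>0$, and in general because $e^{-2i\beta}(|\xi|^2+\mu)$ and $\sigma^2\brac{x}^2$ have arguments $-2\beta$ and $2\arg\sigma\in[-2\beta_0+2\delta,2\pi-2\delta]$. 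These cannot coincide mod $2\pi$, given the choice of $\beta$ (which is where $\delta$ enters). Here I must check that the subleading terms, which are conormal with extra decay, do not destroy ellipticity uniformly in $\sigma$. Elliptic parametrix construction in the calculus gives $\|(1-\chi_R)u\|\le C\|P_{h,\beta}(\sigma)u\|+Ch\|u\|$ in the weighted transition Sobolev spaces. The weights are bounded above and below on compact sets, so they match $H^s_h$ near $\supp\chi$.

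\emph{Gluing.} Combining the two estimates via a partition of unity produces commutator errors supported in $R/2\le|x|\le R$, which lies in the elliptic region of both estimates. The remaining $\bigO(h)$ errors are absorbed for $h\le h_0$. This yields the a priori estimate. Applying the same argument to the adjoint gives invertibility, and cutting off with $\chi$ gives the stated bound for all $s$.

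The main obstacle I expect is the uniform ellipticity and parametrix near infinity across the transition $|\sigma|\brac{x}\sim1$. Specifically, one must verify that the lower order terms and the imaginary part $ih^2xD_x$ behave uniformly in the scaled bounded geometry framework. I also expect some difficulty in matching the resulting $(|\sigma|,h)$-dependent spaces to $H^s_h$ without losing uniformity as $\sigma\to0$.
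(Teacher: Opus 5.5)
Your route is the paper's: complex scale so that $\Gamma_\beta=\R^n$ on $\supp\chi$, show $P_{\beta,h}-\sigma^2\in\rho_\tf^2\Diff^2_\hscb(\R^n)$ is semiclassically sc-b elliptic near infinity, and use ordinary semiclassical ellipticity on a compact set, with $\sigma_0$ chosen after the radius. Then glue, absorb the $h^N$ error, get invertibility, and cut off, letting $\beta$ vary in $[0,\beta_0]$ to cover the range of $\arg(\sigma)$.

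However, your stated reduction is false: it does not suffice to prove $\|u\|_{H^s_h}\le C\|P_{h,\beta}(\sigma)u\|_{H^{s-2}_h}$ uniformly in $0<|\sigma|\le\sigma_0$, because that estimate cannot hold. Take $u$ an $L^2$-normalized bump of width $L=|\sigma|^{-1}$ centred at distance $2L$ from the origin. Then $h^2\Delta u$, $\mu|x|^{-2}u$, the first order terms and $\sigma^2u$ all have norm $\bigO(|\sigma|^2)$, so any such constant is at least $c|\sigma|^{-2}$. Since your estimate near infinity lives in weighted spaces, what your gluing actually yields is the estimate the paper proves in Proposition \ref{prop_threshold_estimates}:
\[
\|u\|_{H^s_\scbnorm(\R^n)}\le C\|(P_{\beta,h}-\sigma^2)u\|_{\rho_\tf^2H^{s-2}_\scbnorm(\R^n)}.
\]
You pass to $H^s_h$ only after both cutoffs are in place, via Remark \ref{rmk_norm_equiv} and $\rho_\tf^{-1}\le\brac{x}\le C$ on $\supp\chi$. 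Staying in the weighted sc-b spaces until that last step is exactly what removes the ``matching'' obstacle you anticipate.

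Two smaller corrections. First, after factoring out $\rho_\tf^2$ the symbol is
\[
e^{-2i\beta}(|\xi|^2+\mu\rho_\scf^2)-e^{2i\arg(\sigma)}\rho_\zf^2+\lilO_{|x|\to\infty}(|\xi|^2+\rho_\scf^2).
\]
The $|\xi|^2$ term is not divided by $(1+|\sigma|\brac{x})^2$; with your formula, ellipticity at fiber infinity would degenerate at the scattering face. With the correct form, your comparison of the arguments $-2\beta$ and $2\arg(\sigma)$, together with $\rho_\scf+\rho_\zf=1$, gives $|a|\ge c\brac{\xi}^2$.

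Second, no propagation or complex absorption is needed in the interior. Assumption \ref{assumption_elliptic} says $\sigma_\h(P_h)>0$ on $\overline{T}^*\R^n$, so there is no zero-energy characteristic set. The paper uses purely elliptic estimates on $|x|\le R_2+2$, with $\sigma_0^2<\frac12\mu(R_2+2)^{-2}$. This includes the transition region, where $|p_{h,\beta}|\ge\frac12(|\xi|^2+\mu|x|^{-2})$.
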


\begin{rmk}
    By modifying the pseudodifferential operator calculus of \S \ref{section_hscb_calculus}, one could prove similar uniform resolvent estimates for semiclassical operators that behave near infinity like a Schrödinger operator with a different long-range repulsive potential, i.e.\ $P_h \sim h^2\Delta + \mu|x|^{-2\gamma}$ with $\mu>0$ and $\gamma\in (0,1)$. We do not further pursue this here, but see Remark \ref{rmk_gamma_calculus} for a more extensive discussion.
\end{rmk}


Returning to our discussion of Schwarzschild quasinormal modes, it is not immediately clear that the QNM defined through the Regge-Wheeler operator agree with the QNM constructed in \cite{stucker_QNM_Kerr}. Indeed, the operator in \eqref{regge-wheeler_op_intro} lives on the exterior region $(2m,\infty)_r=\R_x$. We applied complex scaling both at infinity and at the black hole horizon and obtained quasinormal modes as the poles of $\chi P_\ell(\sigma)^{-1}\chi$, where $\chi$ cuts off to a compact subset of $(2m,\infty)_r$. On the other hand, in \cite{stucker_QNM_Kerr} we used coordinates $(t_*,r,\omega)$ for which the Schwarzschild metric extends beyond the horizon and considered the spectral family $P^\hor(\sigma) = e^{i\sigma t_*}\Box_ge^{-i\sigma t_*}$ on a spatial slice extending into the interior region. Complex scaling was only used at asymptotically flat infinity and QNM were obtained as the poles of $\chi^\hor P^\hor(\sigma)^{-1}\chi^\hor$, where $\chi^\hor$ cuts off away from infinity, but is identically one across the horizon. In \S \ref{section_qnm_comparison}, we will show that the poles of these two cutoff resolvents do in fact coincide. Thus, cutting off away from the horizon does not lead to any pole cancellations. Note that this contrasts with the case of exact de Sitter space, where \cite{hintz_xie_dual_resonances} showed that such cancellation can occur.

\begin{thm}
\label{thm_pole_cancellation}
    The resonances of the Regge-Wheeler operator $P_\ell(\sigma)$ for various $\ell$ agree with the resonances of the full spectral family $P^\hor(\sigma)$ with respect to a time-coordinate extending beyond the horizon.
\end{thm}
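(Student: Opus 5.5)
We prove Theorem~\ref{thm_pole_cancellation} by comparing the two cutoff resolvents directly: we relate $P^\hor(\sigma)$ and the Regge--Wheeler operators through the change of time coordinate, and then use uniqueness of outgoing solutions in both directions to exclude cancellations.

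\emph{Conjugation.} The coordinate $t_* = t + F(r)$ (with $F(r)\sim r_*$ near the horizon, $r_*=x$ the tortoise coordinate) gives $e^{i\sigma t_*}=e^{i\sigma t}e^{i\sigma F(r)}$. On the exterior $r>2m$ we therefore have $P^\hor(\sigma) = e^{-i\sigma F}P(\sigma)e^{i\sigma F}$, where $P(\sigma)=e^{i\sigma t}\Box_g e^{-i\sigma t}$. Decomposing into spherical harmonics and conjugating by $r$ and the tortoise change of variables yields $P_\ell(\sigma)$, up to a smooth nonvanishing weight $r^{-1}(1-2m/r)$. Both families have the same complex scaling at infinity, so they agree after scaling there.

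\emph{Poles of $\chi P_\ell^{-1}\chi$ are poles of $\chi^\hor (P^\hor)^{-1}\chi^\hor$.} Suppose $\sigma_0$ is a pole of $\chi P_\ell(\sigma)^{-1}\chi$ with $\arg\sigma_0\in(-\beta,\pi-\beta)$. Then there is a resonant state $u$, an eigenfunction of the scaled operator $P_{\beta,\ell}(\sigma_0)$. Restricted to the real line near $x\to-\infty$, $u$ is outgoing at the horizon: by the convergent Frobenius-type expansion of the Regge--Wheeler equation at the regular singular point $r=2m$, $u = e^{-i\sigma_0 x}a(r)$ with $a$ analytic in $r-2m$. (Complex scaling selects exactly this solution: the incoming one, $e^{i\sigma_0 x}$, grows along $e^{i\beta}\R$.) Then $v=e^{-i\sigma_0F}r^{-1}u\,Y_{\ell m}$ extends smoothly, in fact analytically, across $r=2m$, because $e^{-i\sigma_0(x-F)}$ is smooth there. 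This $v$ solves $P^\hor(\sigma_0)v=0$ and is outgoing at infinity, so it is a resonant state of $P^\hor$. This includes extension into the interior region, where the spacelike slice makes the equation solvable as a hyperbolic/ODE continuation.

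\emph{The converse.} A resonant state of $P^\hor(\sigma_0)$ is smooth across the horizon. Decomposing it in spherical harmonics, some component is nonzero and has the form $e^{-i\sigma_0 F}r^{-1}u_\ell Y_{\ell m}$ on the exterior. Here $u_\ell$ solves the Regge--Wheeler equation and equals $e^{-i\sigma_0 x}\times(\text{smooth in } r)$, so it is outgoing at the horizon. It therefore continues along the scaling contour near $x=-\infty$ to a decaying eigenfunction of $P_{\beta,\ell}(\sigma_0)$. One must check that the component is nonzero on the exterior: by unique continuation for the ODE, a component vanishing on the exterior vanishes everywhere.

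\emph{Multiplicities.} Multiplicities should match by the same correspondence applied to the Jordan chains, equivalently to the singular parts of the Laurent expansions, since the conjugation $e^{i\sigma F}$ is holomorphic in $\sigma$. The zero-energy point is excluded by the earlier results.

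The main obstacle is rigorously justifying the horizon step. We need the exact identification of scaled-decaying eigenfunctions near $x=-\infty$ with solutions of the form $e^{-i\sigma x}\times$ (analytic in $r$), uniformly for $\sigma$ in the relevant region, including the lower half-plane where $e^{-i\sigma x}$ grows as $x\to-\infty$. The plan for this step is to write $r-2m$ as an analytic function of $e^{x/2m}$ and do the analysis in the complex $r$ variable. The Regge--Wheeler equation has indicial roots $\pm 2mi\sigma$ at $r=2m$. When these differ by an integer, that is for $\sigma\in -\tfrac{i}{4m}\N$, logarithmic terms could appear, and these special frequencies will need separate care.
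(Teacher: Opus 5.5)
Your argument is correct in outline, but it follows a genuinely different route from the paper.

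\emph{The paper's route.} It fixes a pole $\sigma_0$ of $P^\hor_{\ell,\beta}(\sigma)^{-1}$ and writes the leading Laurent coefficient as $\sum_j u_j\pair{\varphi_j}{\cdot}$. It then shows that cutting off with $\chi\in C_c^\infty((2m,\infty))$ cannot kill this coefficient:
\begin{itemize}
\item $\chi u_j\neq 0$, because resonant states are analytic at $r=2m$ (Lemma~\ref{lemma_keldysh});
\item $\chi\varphi_j\neq0$, because finite speed of propagation gives $\supp\varphi_j\subset\{r\geq 2m\}$, and an explicit computation excludes $\varphi_j=\sum_k a_k\delta^{(k)}(r-2m)$.
\end{itemize}
The easy inclusion (Regge--Wheeler resonances are $P^\hor$-resonances) comes for free from Lemma~\ref{lemma_resolvent_relation}.

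\emph{Your route.} You never touch dual states. You match kernels, $\ker P^\hor_{\ell,\beta}(\sigma_0)\leftrightarrow\ker P_{\beta,\ell}(\sigma_0)$. The link is that ``smooth across $r=2m$'' is the same as ``a multiple of the Frobenius solution $e^{-i\sigma_0x}a(r)$ with $a$ analytic'', which is exactly the solution decaying along the horizon end of $\Gamma_\beta$. No-cancellation is then inherited from the doubly scaled Regge--Wheeler operator, where it is elementary. Its resonant and dual states solve a regular second order ODE along the contour, so they cannot vanish on the real segment where $\chi=1$. You should state this step explicitly: it is exactly where your direction ``$P^\hor$-resonance $\Rightarrow$ Regge--Wheeler resonance'' passes to the cutoff resolvent.

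\emph{Trade-offs.} Your route is more elementary and isolates the only possible obstruction, namely frequencies where the indicial roots differ by an integer. The paper's route stays at the level of cutoff resolvents and uses soft global input (finite speed of propagation, analyticity) plus a local computation at the horizon. It never has to compare the $r$-scaling and the $x$-scaling at infinity, and it parallels the de Sitter analysis of Hintz--Xie.

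\emph{Details to fix.}
\begin{itemize}
\item \textbf{Sign of the conjugation.} Since $e^{i\sigma t_*}=e^{i\sigma t}e^{i\sigma F}$, you get $P^\hor(\sigma)=e^{i\sigma F}P(\sigma)e^{-i\sigma F}$. Hence $v=e^{i\sigma_0 F}r^{-1}uY_{\ell m}$ and $u_\ell=re^{-i\sigma_0F}v_\ell$. As you wrote it, $v$ would contain $e^{-i\sigma_0(x+F)}$, which is singular at the horizon. Your remark that $e^{-i\sigma_0(x-F)}$ is smooth already uses the correct sign.
\item \textbf{Exceptional frequencies.} The set $-\frac{i}{4m}\N$ needs no separate care. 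It lies on $i\R_{\le 0}$, outside $\bigcup_{|\beta|<\pi/2}\Lambda_\beta=\C\setminus i\R_{\le0}$, which is the only region where Regge--Wheeler resonances are defined (the contours must stay in $\Omega$). These are exactly the frequencies $-i\frac{N+1}{4m}$ singled out in the proof of Lemma~\ref{lemma_no_delta_sol} as the only candidates for horizon-supported dual states. This is consistent with de Sitter space, whose resonances sit at the analogous frequencies and where cancellation does occur.
\item \textbf{Continuation across $r=2m$.} Your ``unique continuation for the ODE'' across $r=2m$ is not plain ODE uniqueness, since $r=2m$ is a regular singular point. You need smoothness of $v$ together with the indicial roots $0$ and $4mi\sigma_0\notin\N_0$. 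These rule out the non-smooth branch $|r-2m|^{4mi\sigma_0}\times(\text{analytic})$, which is what would allow a solution vanishing on one side. This is the content of Lemma~\ref{lemma_keldysh} and follows from the same Frobenius analysis you already plan.
\item \textbf{Behaviour at infinity.} The two problems are scaled in different variables there ($r$ versus $x$). Replace ``same complex scaling'' by the standard fact that the outgoing solution decays along every ray of the admissible sector.
\item \textbf{Multiplicities.} Your multiplicity remark is heuristic, but it is not needed: the theorem, as proved in the paper, asserts equality of pole sets only.
\end{itemize}
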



\subsection{Outline of the paper}

\begin{itemize}
    \item In \S \ref{section_hscb_calculus}, we introduce the semiclassical sc-b calculus. This pseudodifferential operator algebra is constructed in \S \ref{section_scaled_bounded} using the framework of scaled bounded geometry developed in \cite{hintz_scaled_bounded_geometry}. In \S \ref{section_hscb_ellipticity}, we state uniform elliptic estimates in this calculus, which are used in the rest of the paper.
    \item In \S \ref{section_thresholds}, we use the semiclassical sc-b calculus and complex scaling to prove Thm.\ \ref{thm_threshold_estimates}, establishing uniform cutoff resolvent estimates near zero energy for a class of semiclassical differential operators on $\R^n$, which behave near infinity like a Schrödinger operator with a repulsive inverse-square potential. The main ingredient is the uniform bound on the complex scaled resolvent acting between semiclassical sc-b Sobolev spaces, proved in Prop.\ \ref{prop_threshold_estimates}.
    \item In \S \ref{section_schwarzschild}, we turn to the study of quasinormal modes on a Schwarzschild black hole. In \S \ref{section_regge-wheeler}, we state requisite properties of the analytically continued Regge-Wheeler operator at angular momentum $\ell$. In \S \ref{section_infty_estimates}, we use elliptic theory in the semiclassical sc-b calculus to prove estimates on the complex scaled Regge-Wheeler operator near asymptotically flat infinity, which are uniform in $\ell\to\infty$. The semiclassical parameter is essentially the inverse of the angular momentum. In \S \ref{section_horizon_estimates}, we use the method of \cite{zworski_large_parameter_scaling} to prove similar uniform estimates near the black hole horizon. These estimates are used in \S \ref{section_weyl_law} to prove the resonance exclusion result of Thm.\ \ref{thm_exclusion}, which combined with the results of \cite{hitrik_zworski} gives the Weyl law of Thm.\ \ref{thm_counting}.
    \item In \S \ref{section_qnm_comparison}, we compare the resonances of the Regge-Wheeler operator to the poles of the cutoff resolvent for the Schwarzschild spectral family in coordinates that extend beyond the event horizon. We show that cutting the resolvent off away from the event horizon does not lead to pole cancellations, proving Thm.\ \ref{thm_pole_cancellation}. The key result is Lemma \ref{lemma_no_delta_sol}, showing that there are no dual resonant states supported on the event horizon.
\end{itemize}

\subsection*{Acknowledgements}
I wish to thank Maciej Zworski for suggesting the problem of resonance counting on Schwarzschild and for valuable discussions.
I am also grateful to Peter Hintz for many helpful suggestions and for carefully reading parts of the manuscript.

\section{Semiclassical scattering-b transition calculus}
\label{section_hscb_calculus}
In this section, we define an algebra of parameter-dependent pseudodifferential operators on $\R^n$. It is a semiclassical version of the scattering-b transition (sc-b) algebra, which was first introduced in \cite{guillarmou_hassell} and used in \cite{stucker_QNM_Kerr} to obtain uniform low energy resolvent estimates for the Kerr spectral family; see also \cite[\S A.3]{hintz_mode_stability}. We will see that this semiclassical sc-b calculus provides the right framework for studying the low energy behavior of certain semiclassical spectral problems.

The differential operators we wish to study depend both on a complex parameter $\sigma$ (the spectral parameter) and a small parameter $h$ (the semiclassical parameter). Thus, we take as our parameter space $(h,\sigma) \in P = (0,1]\times\Sigma$, where
\begin{equation}
\label{Sigma}
    \Sigma = \{\sigma \in \C,\, 0 < |\sigma| \leq c,\, \phi_- \leq \arg(\sigma) \leq \phi_+\},
\end{equation}
for some $c>0$ and $0 < \phi_+ - \phi_- < 2\pi$, is a bounded sector in the complex plane. We will be particularly interested in the limit $\sigma\to 0$, $h\to 0$.

Denote by $\rho_\scf\in C^\infty(P\times\R^n)$ the weight function
\[\rho_\scf = \frac{1}{1+|\sigma|\brac{x}},\]
where $\brac{x} = (1+|x|^2)^{\frac{1}{2}}$.
Semiclassical sc-b differential operators, $\Diff_\hscb(\R^n)$, will be built from the vector fields
\begin{equation*}
    h\rho_\scf\brac{x}\p_{x^j}, \quad j\in\{1,\dots,n\},
\end{equation*}
with parameter-dependent coefficients that are conormal at infinity, i.e.\ enjoy regularity upon repeated application of $\brac{x}\p_x$, uniformly in $\sigma,h$. That is,
\begin{equation}
\begin{split}
\label{diff_ops}
    \Diff^m_\hscb(\R^n) = \Bigl\{&\sum_{|\alpha|\leq m} a_\alpha(x,h,\sigma)\Bigr(h\frac{\brac{x}}{1+|\sigma|\brac{x}}\p_{x}\Bigr)^\alpha,\,\, \exists\,C_{\alpha,\beta}>0 \text{ such that }\\ 
    &|\p_x^\beta a_\alpha(x,h,\sigma)| \leq C_{\alpha,\beta}\brac{x}^{-|\beta|}, \,\,\forall\, (h,\sigma)\in P,\, x\in\R^n,\,\alpha,\beta\in\N_0^n\Bigr\}.
\end{split}
\end{equation}
Notice that for $\sigma=\sigma_0 \neq 0$ fixed we have $\rho_\scf \sim C\brac{x}^{-1}$, so an element $A\in\Diff_\hscb(\R^n)$ reduces upon restriction to $\sigma=\sigma_0$ to a semiclassical scattering operator $A|_{\sigma=\sigma_0}$, see \cite{vasy_zworski_hsc-calculus} for this notion. On the other hand, at $\sigma=0$ we have $\rho_\scf = 1$, so $A|_{\sigma=0}$ gives rise to a semiclassical b-operator, see for instance \cite[\S A.3]{hintz_vasy_nonlinear-stability}. The semiclassical scattering-b transition algebra can be viewed as patching together these more well-known operator algebras in a continuous manner, hence the name.

Due to the conormal regularity of the coefficients, we have
\[[\Diff^m_\hscb(\R^n),\Diff^{m'}_\hscb(\R^n)] \subset h\rho_\scf\Diff^{m+m'-1}_\hscb(\R^n).\]
Thus, by mapping $h\rho_\scf\brac{x}\p_{x^j} \to \xi^j$, the $j$-th coordinate of $(\R^n)^*$ we obtain a multiplicative principal symbol map $\sigma_\hscb$ fitting into a short exact sequence
\[0 \to h\rho_\scf\mathrm{Diff}_\hscb^{m-1}(\R^n) \hookrightarrow \mathrm{Diff}_\hscb^m(\R^n) \xrightarrow{\sigma_\hscb} \frac{P^m(P\times\R^n\times(\R^n)^*)} {h\rho_\scf P^{m-1}(P\times\R^n\times(\R^n)^*)} \to 0,\]
where $P^m(P\times\R^n\times(\R^n)^*)$ consists of functions $p\in C^\infty(P\times\R^n\times(\R^n)^*)$ that are polynomial in $(\R^n)^*$ and satisfy
\[\bigl|(\brac{x}\p_x)^\alpha(\brac{\xi}\p_\xi)^\beta p(h,\sigma,x,\xi)\bigr| \leq C_{\alpha,\beta}\brac{\xi}^m, \quad \forall\,(h,\sigma)\in P,\,x\in\R^n,\,\xi\in(\R^n)^*,\, \alpha,\beta\in\N_0^n.\]
Note that $p$ can more naturally be thought of as a function on a vector bundle $T^*_\hscb\R^n$ over $P\times\R^n$, see \S \ref{section_symbols_pseudos}.


Elements of $\Diff^m_\hscb(\R^n)$ define uniformly (in $h,\sigma$) bounded operators on corresponding semiclassical scattering-b transition Sobolev spaces $H^s_\scbnorm(\R^n)$. These are spaces of distributions on $\R^n$ equipped with $(h,\sigma)$-dependent norms. When $s$ is a positive integer the norm can be defined by
\begin{equation}
\label{sobolev_norm}
    \|u\|_{H^s_\scbnorm(\R^n)}^2 = \sum_{|\alpha|\leq s}\Bigl\|\Bigl(h\frac{\brac{x}}{1+|\sigma|\brac{x}}\p_{x}\Bigr)^\alpha u\Bigr\|_{L^2(\R^n)}^2.
\end{equation}

We will see that there exists a graded algebra of pseudodifferential operators $\Psi_\hscb(\R^n)$ microlocalizing $\mathrm{Diff}_\hscb(\R^n)$ with a well-behaved principal symbol map. Moreover, elements of $\Psi_\hscb(\R^n)$ define bounded operators between the Sobolev spaces $H_\scbnorm(\R^n)$. This leads directly to a notion of microlocal elliptic regularity for semiclassical scattering-b transition operators, which we will use to obtain uniform low energy estimates for certain semiclassical spectral problems.

Instead of defining pseudodifferential operators via their Schwartz kernel on some resolved double space, we will construct this pseudodifferential calculus using the framework of scaled bounded geometry recently introduced by Hintz \cite{hintz_scaled_bounded_geometry}.

 


\subsection{Scaled bounded geometry structure}
\label{section_scaled_bounded}
A bounded geometry (b.g.) structure on a noncompact manifold, such as $\R^n$, is given by selecting a specific collection of charts that covers the manifold. It provides a notion of uniformity as one approaches ``infinity'': objects are uniform, if they behave uniformly across the various charts in the b.g.\ structure.

A scaled b.g.\ structure additionally includes a scale factor $\rho\in(0,1]$ in each chart. This scaling specifies the basic derivatives from which to build differential operators.
Thus, a scaled b.g.\ structure determines an associated class of differential operators on the noncompact manifold: their coefficients are uniform with respect to the b.g.\ structure, whereas the fundamental frame of vector fields consists of $\rho$-rescalings of the derivatives in the charts. In fact, every scaled b.g.\ structure gives rise to an algebra of pseudodifferential operators and a scale of Sobolev spaces on which these act, see \cite{hintz_scaled_bounded_geometry}. Such operators possess a refined notion of principal symbol capturing the leading order behavior not only modulo lower order derivatives but also modulo higher order decay as $\rho\to 0$.

In this subsection, we define a scaled b.g.\ structure on $\R^n$ -- depending on the additional parameters $(h,\sigma)$ -- whose associated pseudodifferential operators are the semiclassical scattering-b transition operators alluded to above.

We cover $\R^n$ by charts $(U_\ind,\phi_\ind)$ with $\phi_\ind:U_\ind\to(-2,2)^n$ as follows:
\[U_0 = (-4,4)^n, \quad \phi_0(x) = \frac{x}{2},\]
and for $j\in \{1,\dots,n\}$, $k\in\N_0$
\begin{equation}
\label{charts}
\begin{split}
    U_{j,k,\pm} &= \Bigl\{x\in\R^n,\,\, \pm x^j \in (2^{k-2},2^{k+2}),\,\, \frac{x^i}{x^j}\in(-4,4) \,\,\forall\,i\neq j\Bigr\},\\
    \phi_{j,k,\pm}(x) &= \Bigl(\tfrac{16}{15}\bigl(\pm2^{-k}x^j\bigr)-\tfrac{34}{15},\frac{x_1}{2x_j},\dots,\frac{x_{j-1}}{2x_j},\frac{x_{j+1}}{2x_j},\dots,\frac{x_n}{2x_j}\Bigr) \in (-2,2)^n.
\end{split}
\end{equation}
Denoting by $\I = \{0\}\sqcup\bigl(\{1,\dots,n\}\times\N_0\times\{+,-\}\bigr)$ the index set of the above charts, the collection $\{(U_\ind,\phi_\ind)\}_{\ind\in\I}$ defines a bounded geometry structure on $\R^n$ in the sense of \cite[Def.\ 1.2]{hintz_scaled_bounded_geometry}. Note that this is precisely the example in \cite[\S 1.4.1]{hintz_scaled_bounded_geometry}; it can be thought of as the b.g.\ structure for the b-calculus on the radial compactification $\overline{\R^n}$.

The precise expression in \eqref{charts} is not particularly important -- it is chosen to ensure that $\phi_\ind\in(-2,2)^n$ and the $\phi_\ind^{-1}((-1,1)^n)$ cover $\R^n$ -- rather, we focus on the notion of uniform smoothness induced by the b.g.\ structure. Indeed, the b.g.\ structure is essentially determined (up to equivalence) by its space of uniformly smooth functions, see \cite[Prop.\ 2.3]{hintz_scaled_bounded_geometry}. 
This is the subspace of smooth functions on $\R^n$ whose $C^\infty$-seminorms in each coordinate chart $\phi_\ind$ are bounded uniformly in $\ind\in\I$. Note that $\brac{x}\sim |x^j| \sim 2^k$ on $U_{j,k,\pm}$ and the pullback $\phi_{j,k,\pm}^*\p_{y^i}$ of the coordinate vector fields in our elect charts can be expressed as linear combinations of the vector fields $\brac{x}\partial_{x^i}$ with coefficients that are uniform across the $U_{j,k,\pm}$. Thus, uniform smoothness with respect to the charts in \eqref{charts} corresponds to regularity upon repeated application of $\brac{x}\p_x$ and the uniformly smooth functions are given by
\[\A(\R^n) = \bigl\{f\in C^\infty(\R^n),\,\, (\brac{x}\p_x)^\alpha f \in L^\infty(\R^n), \,\,\forall\,\alpha\in\N_0^n\bigr\}.\]

Since we are interested in operators and function spaces that depend on the additional parameters $(h,\sigma)\in P$, we need to take the parameter-dependence into account in our notion of uniformity. We define a parametrized b.g.\ structure by simply using the constant b.g.\ structure $(U_\ind,\phi_\ind)$ for all values of $(h,\sigma)\in P$. This gives rise to the notion of parameter-dependent uniformly smooth functions, which lie in $\A(\R^n)$ uniformly in $(h,\sigma)$:
\begin{equation}
\label{uniform-functions}
    \A_P(\R^n) = \bigl\{f\in C^\infty(P\times\R^n),\,\, |\p_x^\alpha f(h,\sigma,x)| \leq C_\alpha\brac{x}^{-|\alpha|}, \,\,\forall\, x\in\R^n,\, (h,\sigma)\in P,\, \alpha\in\N_0^n\bigr\}.
\end{equation}

We can also define the coefficient Lie algebra $\W$, consisting of parameter-dependent vector fields on $\R^n$ whose coefficients with respect to the coordinate vector fields in the charts $(U_\ind,\phi_\ind)$ have uniformly bounded $C^\infty$-seminorms:
\begin{equation}
\label{coeff-vectors}
    \W = \Bigl\{\sum_{j=1}^n a_j(h,\sigma,x)\brac{x}\p_{x^j},\,\, a_j\in\A_P(\R^n)\,\,\forall\,j\in\{1,\dots,n\}\Bigr\}.
\end{equation}

We now introduce parameter-dependent scalings $\rho_{\ind,h,\sigma}$ on the charts $(U_\ind,\phi_\ind)$ as follows:
\begin{equation}
    \rho_{0,h,\sigma} = h, \quad \rho_{(j,k,\pm),h,\sigma} = \frac{h}{1+|\sigma|2^k}, \quad \forall\, h\in(0,1],\, \sigma\in\Sigma,\, j\in\{1,\dots,n\},\, k\in\N_0.
\end{equation}
Then $\{(U_\ind,\phi_\ind,\rho_{\ind,h,\sigma})\}_{\ind\in\I,(h,\sigma)\in P}$ defines a parameterized scaled bounded geometry structure in the sense of \cite[Def.\ 1.10]{hintz_scaled_bounded_geometry}. Note that the scalings are the same in each direction (compare \cite[Def.\ 1.4]{hintz_scaled_bounded_geometry}). Unlike the b.g.\ structure, the scale factors depend explicitly on $(h,\sigma)\in P$.

Our differential operators will be generated by (parameter-dependent) vector fields $V$ which in the charts $(U_\ind,\phi_\ind)$ take the form
\[(\phi_\ind)_*V = \sum_{j=1}^n V_\ind^j(y,h,\sigma)\rho_{\ind,h,\sigma}\partial_{y^j},\]
where the $V_\ind^j(y,h,\sigma)$ have $C^\infty$-seminorms that are uniformly bounded with respect to $\ind,h,\sigma$.
Such vector fields constitute the operator Lie algebra, which in our case amounts to
\begin{equation}
\label{op-vectors}
    \V_\hscb = \Bigl\{\sum_{j=1}^n a_j(x,h,\sigma)\frac{\brac{x}}{1+|\sigma|\brac{x}}h\p_{x^j},\,\, a_j\in \A_P(\R^n) \,\,\forall\,j\in\{1,\dots,n\}\Bigr\}.
\end{equation}
The space of $m$-th order semiclassical scattering-b transition differential operators, denoted $\Diff_\hscb^m(\R^n)$, consists of finite sums of up to $m$-fold compositions of elements of $\V_\hscb$. Notice that this gives precisely the operators introduced in \eqref{diff_ops}.

The final ingredient we need for our analysis is a notion of weight functions associated to the parametrized b.g.\ structure, see \cite[Def.\ 3.4]{hintz_scaled_bounded_geometry}. A weight is a positive function $w\in C^\infty(P\times\R^n)$ such that $w^{-1}W(w) \in \A_P(\R^n)$ for all $W\in\W$, that is,
\begin{equation}
    0 < w\in C^\infty(P\times\R^n), \quad (\brac{x}\p_x)^\alpha\Bigl(\frac{\brac{x}\p_x^jw(h,\sigma,x)}{w(h,\sigma,x)}\Bigr) \in L^\infty(P\times\R^n), \,\,\forall\, \alpha\in\N_0^n, j\in\{1,\dots,n\}.
\end{equation}
Note that if $w$ is a weight, then $w^q$ is a weight for any power $q\in\R$. Associated to our chosen scale factors is a scaling weight, which captures the scaling in the charts on a global level
\[\rho_\text{scale} = h\rho_\scf = \frac{h}{1+|\sigma|\brac{x}}.\]
Further relevant weights include powers of $h$, $|\sigma|$ and $\brac{x}$, as well as
\begin{equation}
\label{bdfs}
    \rho_\scf = \frac{1}{1+|\sigma|\brac{x}}, \quad \rho_\tf = \frac{1+|\sigma|\brac{x}}{\brac{x}}, \quad \rho_\zf = \frac{|\sigma|\brac{x}}{1+|\sigma|\brac{x}}.
\end{equation}
If $w$ is a weight, we denote by $w\Diff_\hscb^m(\R^n)$ the space of weighted semiclassical sc-b differential operators, consisting of operators $A$ such that $w^{-1}A \in \Diff_\hscb^m(\R^n)$.

\subsection{Symbols and pseudodifferential operators}
\label{section_symbols_pseudos}
In order to define symbol spaces $S^m(T^*_\hscb\R^n)$, see \cite[Def.\ 3.8]{hintz_scaled_bounded_geometry}, it is convenient to introduce the vector bundle $T^*_\hscb\R^n$ over $P\times\R^n$. Writing $y_\ind=\phi_\ind(x)$ for $x\in U_\ind$, this can be patched together from local trivializations $P\times \phi_\ind(U_\ind)\times\R^n$ in the charts using the transition functions
\[(h,\sigma,y_\ind,\xi) \in P\times \phi_\ind(U_\ind\cap U_\mathfrak{j})\times\R^n \to \Bigl(h,\sigma,y_\mathfrak{j},\sum_{j=1}^n\frac{\rho_{\mathfrak{j},h,\sigma}}{\rho_{\ind,h,\sigma}}\frac{\p y_\ind^j}{\p y_\mathfrak{j}^k}\eta_j\Bigr) \in P\times \phi_\mathfrak{j}(U_\ind\cap U_\mathfrak{j})\times\R^n.\]
Then $S^m(T^*_\hscb\R^n)$ consists of all $a\in C^\infty(T^*_\hscb\R^n)$ such that writing $a_\ind(h,\sigma,y,\xi)$ for the expression in the local trivializations above, one has
\begin{equation}
\label{symbols_loc}
    |\p_y^\alpha\p_\xi^\beta a_\ind(h,\sigma,y,\xi)| \leq C_{\alpha,\beta}\brac{\xi}^{m-|\beta|}, \quad \forall\, y\in\phi_\ind(U_\ind),\, \xi\in\R^n,\, \alpha,\beta\in\N_0^n,
\end{equation}
where the constants $C_{\alpha,\beta}$ are uniform in $\ind\in\I$.

Note that identifying $(h,\sigma,y_\ind,\xi)$ with $(h,\sigma,\frac{\xi}{\rho_{\ind,h,\sigma}}dy_\ind) \in P\times T^*U_\ind$ gives an isomorphism $T^*_\hscb\R^n \to P\times T^*\R^n$. However, this isomorphism is not uniformly bounded with respect to the b.g.\ structure. Recalling that $dy_\ind \sim \brac{x}^{-1}dx$ and $\rho_{\ind,h,\sigma} \sim h\rho_\scf$ uniformly in $\ind\in\I$, we obtain a global trivialization of $T_\hscb^*\R^n$ via the frame of one-forms $\{\frac{dx^1}{h\rho_\scf\brac{x}},\dots,\frac{dx^n}{h\rho_\scf\brac{x}}\}.$
We will write $(h,\sigma,x,\xi)\in T_\hscb^*\R^n$ for the cotangent vector $(h,\sigma,\frac{\xi}{h\rho_\scf\brac{x}})\in P\times T^*\R^n$. Taking into account the uniformity requirement in \eqref{symbols_loc}, our space of symbols can be characterized globally as
\begin{equation}
\label{symbols}
    S^m(T_\hscb^*\R^n) = \bigl\{a\in C^\infty(T_\hscb^*\R^n),\, |(\brac{x}\p_x)^\alpha\p_\xi^\beta a(h,\sigma,x,\xi)| \leq C_{\alpha,\beta}\brac{\xi}^{m-|\beta|},\, \forall\,\alpha,\beta\in\N_0^n\bigr\}.
\end{equation}

The quantization of symbols can be obtained from local quantization maps in the charts $(U_\ind,\phi_\ind)$, which take into account the scale factor. Indeed, for $a_\ind(h,\sigma,y,\xi)\in P\times (-2,2)^n\times\R^n$ symbolic in $\xi$ of order $m\in\R$ with $\supp(a_\ind) \subset P\times (-\frac{5}{4},\frac{5}{4})^n\times\R^n$ we define
\[\Op_\ind(a_\ind)u(y) = \frac{1}{(2\pi)^n}\int_{\R^n}\int_{\R^n} e^{i\frac{y-y'}{\rho_{\ind,h,\sigma}}\xi}\psi(|y-y'|)a_\ind(h,\sigma,y,\xi)u(y')\,d\xi\,\frac{dy'}{\rho_{\ind,h,\sigma}^n}, \quad \forall u\in C^\infty_c(\R^n),\]
where $\psi \in C_c^\infty((-\frac{1}{4},\frac{1}{4})^n)$ with $\psi=1$ near $0$ cuts off to a neighborhood of the diagonal. The quantization of a symbol $a\in S^m(T_\hscb^*\R^n)$ can then be patched together from these local quantization maps using a partition of unity, see \cite[Def.\ 3.31]{hintz_scaled_bounded_geometry}. In our case, this procedure is equivalent to the global quantization map
\begin{equation}
\label{quantization}
    \Op_\hscb(a)u(x) = \frac{1}{(2\pi)^n}\int_{\R^n}\int_{\R^n} e^{i\frac{x-x'}{h\rho_\scf\brac{x}}\xi}\psi(|\log\bigl(\tfrac{\brac{x'}}{\brac{x}}\bigr)|)a(h,\sigma,x,\xi)u(x')\,d\xi\,\frac{dx'}{(h\rho_\scf\brac{x})^n}.
\end{equation}

As usual, we now obtain the space of $m$-th order semiclassical sc-b pseudodifferential operators as quantizations of symbols modulo residual operators:
\[\Psi_\hscb^m(\R^n) = \bigl\{\Op_\hscb(a) + R,\,\, a\in S^m(T_\hscb^*\R^n),\, R\in h^\infty\rho_\scf^\infty\Psi_\hscb^{-\infty}(\R^n)\bigr\}.\]
Here, $h^\infty\rho_\scf^\infty\Psi_\hscb^{-\infty}(\R^n)$ is the space of residual operators, which are not only smoothing, but also increase decay as $h\to 0$ or $\rho_\scf\to 0$ by any order, see \cite[Def.\ 3.34]{hintz_scaled_bounded_geometry} for the precise definition. For any weight function $w$, we can further define the space of weighted pseudodifferential operators $w\Psi_\hscb^m(\R^n)$.

By \cite[Thm.\ 3.52]{hintz_scaled_bounded_geometry}, $\Psi_\hscb^m(\R^n)$ forms a graded algebra under composition. In fact, for any weights $w_1,w_2$ and $m_1,m_2\in\R$ we have
\[w_1\Psi_\hscb^{m_1}(\R^n) \circ w_2\Psi_\hscb^{m_2}(\R^n) \subset w_1w_2\Psi_\hscb^{m_1+m_2}(\R^n).\]
Moreover, the principal symbol map
\[\sigma_\hscb: \Op_\hscb(a) + R \in \Psi_\hscb^m(\R^n) \longrightarrow [a] \in S^m(T_\hscb^*\R^n) / h\rho_\scf S^{m-1}(T_\hscb^*\R^n),\]
defines an algebra morphism, which fits into the short exact sequence
\begin{equation}
\label{symbol_map}
    0 \to h\rho_\scf\Psi_\hscb^{m-1}(\R^n) \hookrightarrow \Psi_\hscb^m(\R^n) \xrightarrow{\sigma_\hscb} S^m(T^*_\hscb\R^n) / h\rho_\scf S^{m-1}(T^*_\hscb\R^n) \to 0.
\end{equation}

\subsection{Semiclassical scattering-b transition Sobolev spaces}
\label{section_hscb_sobolev}
Associated to the para\-met\-rized scaled b.g.\ structure are weighted Sobolev spaces $wH^s_\scbnorm(\R^n)$, on which operators in $\Psi_\hscb^m(\R^n)$ naturally act. These can be patched together from Sobolev spaces in the charts $(U_\ind,\phi_\ind)$, taking the scale factors $\rho_{\ind,h,\sigma}$ into account. Indeed, let $\chi\in C_c^\infty((-2,2)^n)$ be a cutoff function with $\chi=1$ on $(-1,1)^n$ and denote $\chi_\ind = \phi_\ind^*\chi$. Let further $S_{\ind,h,\sigma}: y\in\R^n\to\frac{y}{\rho_{\ind,h,\sigma}}\R^n$. Following \cite[Def.\ 3.14]{hintz_scaled_bounded_geometry}, we can define $wH^s_\V(\R^n)$ to consist of all families of distributions $u_{h,\sigma}\in\D(\R^n)$ such that $(\phi_\ind)_*(\chi_\ind u_{h,\sigma}) \in H^s(\R^n)$ for all $(h,\sigma) \in P$ and the parameter-dependent norm
\[\|u\|^2_{wH^s_\V(\R^n)} = \sum_{\ind\in\I}\|(S_\ind)_*(\phi_\ind)_*(\chi_\ind w^{-1}u)\|^2_{H^s(\R^n)}\]
is uniformly bounded in $(h,\sigma)\in P$. The uniform boundedness of weighted pseudodifferential operators $A\in \tilde{w}\Psi^m_\hscb(\R^n)$ as maps $A: wH^s_\V(\R^n) \to \tilde{w}wH^{s-m}_\V(\R^n)$ then follows by applying the standard boundedness result for pseudodifferential operators in the charts together with the uniform boundedness of the symbol seminorms, see \cite[Prop.\ 3.32]{hintz_scaled_bounded_geometry}.

Unlike for a general manifold with scaled b.g. structure, we have access to a natural measure on $\R^n$, i.e.\ Lebesgue measure. We will thus follow a slightly different convention from \cite{hintz_scaled_bounded_geometry} and define the unweighted semiclassical sc-b Sobolev spaces to be
\[H^s_\scbnorm(\R^n) = (h\rho_\scf\brac{x})^{\frac{n}{2}}H^s_\V(\R^n).\]
Introducing this extra weight compared to the definition above in terms of the charts ensures that the norm on $H^s_\scbnorm(\R^n)$ is equivalent to
\[\|u\|^2_{H^s_\scbnorm(\R^n)} = \|\Op_\hscb(\brac{\xi}^s)u\|^2_{L^2(\R^n)},\]
where we use the quantization map in \eqref{quantization}. In particular, $\|u\|_{H^0_\scbnorm(\R^n)} = \|u\|_{L^2(\R^n)}$ and for $s\in\N$ the norm is equivalent to \eqref{sobolev_norm}.
Of course, this redefinition does not change the boundedness, uniformly in $(h,\sigma)\in P$, of
\[A: wH^s_\scbnorm(\R^n) \to \tilde{w}wH^{s-m}_\scbnorm(\R^n), \quad \forall A\in \tilde{w}\Psi^m_\hscb(\R^n).\]
\begin{rmk}
\label{rmk_norm_equiv}
    Note that, on bounded subsets of $\R^n$, the semiclassical sc-b norm $\|\cdot\|_{H^s_\scbnorm(\R^n)}$ is equivalent to the usual semiclassical Sobolev norm $\|\cdot\|_{H^s_h(\R^n)}$. That is, for any $\chi \in C_c^\infty(\R^n)$ and any $s\in\R$ there exists $C>0$ such that
    \[\frac{1}{C}\|\chi u\|_{H^s_h(\R^n)} \leq \|\chi u\|_{H^s_\hscb(\R^n)} \leq C\|\chi u\|_{H^s_h(\R^n)}, \quad \forall\, u\in H^s_h(\R^n).\]
    Indeed, this follows from corresponding bounds $\frac{1}{C}h\leq h\frac{\brac{x}}{1+|\sigma|\brac{x}} \leq Ch$ uniformly in $|x|\leq R$, $|\sigma|\leq c$ for the weight in \eqref{sobolev_norm}.
\end{rmk}

\subsection{Ellipticity and wavefront set}
\label{section_hscb_ellipticity}
For pseudodifferential operators arising from a scaled b.g. structure, the elliptic and wavefront sets can be universally defined as subsets of the uniform compactification, see \cite[Def.\ 2.9]{hintz_scaled_bounded_geometry}. In our case, we have access to a more convenient compactification. Denote by $\overline{\R^n}$ the radial compactification of $\R^n$, that is
$$\overline{\R^n} = \R^n \sqcup \bigl([0,\infty)_\bdf\times\Sph^{n-1}_\omega\bigr) / \sim,$$
where we identify $(r,\omega) \in \R^n$, in polar coordinates, with $(\bdf,\omega) = \bigl(\frac{1}{r},\omega\bigr)$. Note that $\brac{x}^{-1}$ extends to a boundary defining function on $\overline{\R^n}$. Let further
$$\Sigma^{\text{res}} = [\Sigma;\{0\}] \simeq [0,c]_{|\sigma|}\times[\phi_-,\phi_+]_{\arg(\sigma)}$$
be the blow-up of $\Sigma$ at the point $\sigma=0$, see \eqref{Sigma}. We then define
$$X_\hscb = [0,1]_h\times X_\scb, \quad\text{where}\quad X_\scb = [\Sigma^{\text{res}}\times\overline{\R^n};(\{0\}\times[\phi_-,\phi_+])\times\partial \overline{\R^n}]$$
is the blow-up of $\Sigma^{\text{res}}\times\overline{\R^n}$ along the submanifold $(\{0\}\times[\phi_-,\phi_+])\times\partial \overline{\R^n}$, i.e.\ at $|\sigma|=0$, $\brac{x}^{-1}=0$. The manifold with corners $X_{\text{sc-b}}$ is used in \cite[\S 5.1]{stucker_QNM_Kerr} to define the scattering-b transition calculus; we refer the reader there for further details.

The weight functions in \eqref{bdfs} extend to smooth boundary defining functions on $X_\hscb$. Indeed, $X_\hscb$ has four boundary hypersurfaces that will be relevant to our analysis, see Figure \ref{fig_hscb}.
\begin{itemize}
    \item The scattering face: $\{\rho_\scf=0\} \simeq [0,1]_h\times \Sigma^{\text{res}} \times \Sph^{n-1}$,
    \item The transition face: $\{\rho_\tf=0\} \simeq [0,1]_h\times [\phi_-,\phi_+]\times [0,\infty]\times \Sph^{n-1}$,
    \item The zero face: $\{\rho_\zf=0\} \simeq [0,1]_h\times [\phi_-,\phi_+]\times \overline{\R^n}$,
    \item The semiclassical face: $\{h=0\} \simeq X_\scb$.
\end{itemize}
Using the frame $\{\frac{dx^1}{h\rho_\scf\brac{x}},\dots,\frac{dx^n}{h\rho_\scf\brac{x}}\}$ all the way down to the boundary, the semiclassical sc-b cotangent bundle extends to a vector bundle over $X_\hscb$, isomorphic to $X_\hscb\times\R^n$. Further performing a radial compactification in the fibers, we obtain a compact manifold with corners, which we denote $\overline{T_\hscb^*\R^n} \simeq X_\hscb\times\overline{\R^n}$. This constitutes an admissible compactification of $T^*_\hscb\R^n$ in the sense of \cite[Def.\ 3.62]{hintz_scaled_bounded_geometry}

\begin{figure}[t]
    \centering
    \includegraphics{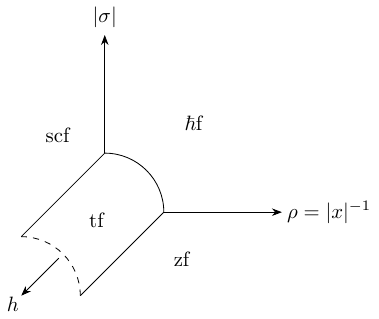}
    \caption{
    The resolved space $X_\hscb$ at fixed $\arg(\sigma)=\text{const.}$ with the various boundary faces represented. The coordinates $\omega\in\Sph^{n-1}$ are suppressed.
    }
    \label{fig_hscb}
\end{figure}

Points in the boundary of $\overline{T_\hscb^*\R^n}$ where the leading order behavior of operators is captured by the principal symbol in \eqref{symbols} form the microlocalization locus $\M$, see \cite{hintz_scaled_bounded_geometry}. Operators in $\Psi^m_\hscb(\R^n)$ are not only symbolic at fiber infinity, i.e.\ $\{\brac{\xi}^{-1}=0\}$, but also at the boundary hypersurfaces $\{\rho_\scf = 0\}$ and $\{h=0\}$ within $\overline{T_\hscb^*\R^n}$, so
\[\M = \{\brac{\xi}^{-1}=0\}\cup\{\rho_\scf = 0\}\cup\{h = 0\} \subset \partial \overline{T_\hscb^*\R^n}.\]

The elliptic set of an operator $A\in w\Psi^m_\hscb(\R^n)$ is given by those points in $\M$, where $\sigma_\hscb(A)$ is locally invertible. More precisely, denoting by $a \in wS^m(T_\hscb^*\R^n)$ a representative of $\sigma_\hscb(A)$, a point $\alpha\in\M$ lies in $\Ell_\hscb(A)$ if and only if there exists a neighborhood $\alpha \in U \subset \overline{T_\hscb^*\R^n}$ and a constant $c>0$ such that
\[\bigl|\brac{\xi}^{-m}w^{-1}a(h,\sigma,x,\xi)\bigr| \geq c, \quad\forall\, (h,\sigma,x,\xi) \in U\cap T_\hscb^*\R^n.\]
Note that this is equivalent to the existence of a symbol $b\in w^{-1}S^{-m}(T_\hscb^*\R^n)$ satisfying $ab-1 = ba-1 = 0$ on $U$.

The wavefront set of an operator $A = \Op_\hscb(a) + R \in w\Psi^m_\hscb(\R^n)$ is given by the complement of those points in $\M$ at which $a\in wS^m(T_\hscb^*\R^n)$ vanishes to infinite order. More precisely, $\alpha \in \M$ does not lie in $\WF_\hscb(a)$ if and only if there exists $\chi\in S^0(T_\hscb^*\R^n)$, elliptic at $\alpha$, such that $\chi a\in wh^N\rho_\scf^N S^{-N}(T_\hscb^*\R^n)$ for all $N\in\R$.

We then have the following microlocal elliptic regularity estimate, see \cite[Prop.\ 3.56]{hintz_scaled_bounded_geometry}:
\begin{prop}
\label{prop_hscb_elliptic_estimate}
    Let $A\in w\Psi^m_\hscb(\R^n)$ and $B\in \Psi^0_\hscb(\R^n)$ such that $\WF_\hscb(B) \subset \Ell_\hscb(A)$. Then for all $N\in\R$ there exists $C>0$ such that $\forall\, u\in \tilde{w}h^{-N}\rho_\scf^{-N}H^{-N}_\scbnorm(\R^n)$:
    \[\|Bu\|_{\tilde{w}H^s_\scbnorm(\R^n)} \leq C\bigl(\|Au\|_{w\tilde{w}H^{s-m}_\scbnorm(\R^n)} + \|u\|_{\tilde{w}h^{-N}\rho_\scf^{-N}H^{-N}_\scbnorm(\R^n)}\bigr).\]
    This holds in the strong sense that if the right hand side is finite then $Bu \in \tilde{w}H^s_\scbnorm(\R^n)$ and the estimate is satisfied.
\end{prop}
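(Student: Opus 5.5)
This is the standard microlocal elliptic parametrix argument in a symbolic calculus. The plan is to build a microlocal inverse of $A$ near $\WF_\hscb(B)$ and iterate it, using only the composition rule, the short exact sequence \eqref{symbol_map} and the uniform Sobolev boundedness from \S\ref{section_hscb_sobolev}.

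First, I reduce to $w=\tilde w=1$. Conjugating by the weights gives $\tilde w^{-1}w^{-1}A\tilde w\in\Psi^m_\hscb(\R^n)$, and this operator has the same principal symbol as $w^{-1}A$. Indeed, commuting a weight through $A$ changes $A$ only by $h\rho_\scf\Psi^{m-1}_\hscb$, because $\tilde w^{-1}W(\tilde w)\in\A_P(\R^n)$ for all $W\in\W$. So ellipticity is unchanged.

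Next comes the symbolic parametrix. Let $a$ represent $\sigma_\hscb(A)$. By the definition of $\Ell_\hscb(A)$ and compactness of $\WF_\hscb(B)\subset\M$ (a closed subset of the compact space $\overline{T^*_\hscb\R^n}$), there is $b_0\in S^{-m}$ with $b_0a=1$ on a neighborhood $U$ of $\WF_\hscb(B)$. Put $G_0=\Op_\hscb(b_0)$. Then $G_0A=I-E_0$, where $E_0$ has symbol in $h\rho_\scf S^{-1}$ on $U$, modulo a term whose wavefront set avoids $U$. Iterating with $G_{k}$ killing the successive errors, I take an asymptotic sum $G\sim\sum G_k$ using standard Borel summation in the weights $h\rho_\scf$ and the order. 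This gives $G\in\Psi^{-m}_\hscb$ with
\[GA=I-E,\qquad \WF_\hscb(E)\cap U=\emptyset.\]
Then
\[B=BGA+BE.\]
Since $\WF_\hscb(B)\subset U$ and the wavefront sets are disjoint, $BE\in h^\infty\rho_\scf^\infty\Psi^{-\infty}_\hscb$, a residual operator. The boundedness results then give
\[\|BGAu\|_{H^s}\le C\|Au\|_{H^{s-m}},\qquad \|BEu\|_{H^s}\le C\|u\|_{h^{-N}\rho_\scf^{-N}H^{-N}}.\]

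The strong form, that $Bu\in H^s$ whenever the right-hand side is finite, follows because the identity $Bu=BGAu+BEu$ holds for $u$ in the a priori space and both terms lie in $H^s$. I would regularize only if the identity needed justification on distributions, but since all operators act on all weighted Sobolev spaces this is direct.

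The main obstacle is checking that the abstract composition and asymptotic-summation results of \cite{hintz_scaled_bounded_geometry} apply uniformly in the parameters $(h,\sigma)\in P$. In particular, the wavefront-set disjointness must yield residual errors gaining $h\rho_\scf$ at every order, which is exactly what the microlocalization locus $\M$ encodes. This is precisely \cite[Prop.~3.56]{hintz_scaled_bounded_geometry}, so in practice I would invoke it directly after verifying that our structure is a parametrized scaled b.g.\ structure and that $\overline{T^*_\hscb\R^n}$ is an admissible compactification.
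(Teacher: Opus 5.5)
Your proposal is correct and matches the paper. The paper gives no independent proof: it simply invokes \cite[Prop.\ 3.56]{hintz_scaled_bounded_geometry} for the parametrized scaled bounded geometry structure it constructs. Your steps are the standard argument behind that result: reducing to unweighted operators via conjugation by weights, building a symbol-by-symbol microlocal parametrix $GA=I-E$ with $\WF_\hscb(E)$ disjoint from a neighborhood of $\WF_\hscb(B)$, and bounding the residual term $BE$.
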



\section{Semiclassical resolvent estimates near thresholds}
\label{section_thresholds}
In this section, we exhibit how the semiclassical scattering-b transition calculus can be used to extend the usual semiclassical elliptic theory down to the threshold energy $|\sigma|=0$ for operators that behave near infinity like a Schr\"odinger operator with a repulsive inverse-square potential. More precisely, we will assume that $P_h = h^2\Delta + \mu\brac{r}^{-2} + \brac{r}^{-2}Q$, where $\mu>0$ and $Q\in\Diff_{\mathrm{b},\h}^2(\R^n)$ is a semiclassical b-operator with coefficients that decay at infinity. In order to construct the meromorphic continuation of the resolvent, we follow the complex scaling approach as expounded for instance in \cite[\S 7]{sjostrand_lectures_resonances}. We will thus assume that near infinity $P_h$ has a holomorphic extension to a an open subset in $\C^n$. We then prove uniform estimates near zero energy for the complex scaled resolvent on the semiclassical sc-b Sovolev spaces of \S \ref{section_hscb_sobolev}.

Let
\begin{equation}
\label{P_h}
    P_h = \sum_{1\leq j,k\leq n} a_{jk}(x)h^2D_{x^j}D_{x^k} + \sum_{1\leq j\leq n} b_j(h,x) hD_{x^j} + V(h,x)
\end{equation}
be a second order semiclassical differential operator, where we write $D_{x^j}=\frac{1}{i}\p_{x^j}$. Here, the $a_{jk}\in C^\infty(\R^n)$ are independent of $h$ and $b_j,V \in C^\infty([0,1]_h\times\R^n)$. The semiclassical principal symbol of $P_h$ (see e.g.\ \cite[App.\ E]{dyatlov_zworski_scattering_book} for this notion) is given by
\[\sigma_\h(P_h)(x,\xi)= \sum_{1\leq j,k\leq n} a_{jk}(x)\xi^j\xi^k + \sum_{1\leq j\leq n} b_j(0,x)\xi^j + V(0,x).\]
We assume that
\begin{enumerate}
    \item\label{assumption_elliptic} the semiclassical principal symbol is real-valued and elliptic, that is,
    \[\brac{\xi}^{-2}\sigma_\h(P_h)(x,\xi) > 0, \quad \forall\,(x,\xi)\in \overline{T}^*\R^n,\]
\end{enumerate}
where $\overline{T}^*\R^n$ is the fiber-radial compactification of the cotangent bundle.
In order to apply the method of complex scaling, we further assume that
\begin{enumerate}[resume]
    \item\label{assumption_holomorphic} there exist $\beta_0 \in (0,\pi)$, $R_0>0$ such that for each $h\in[0,1]$ the coefficients $a_{jk}(x),b_j(h,x),V(x,h)$ of $P_h$ extend from $r=|x|>R_0$ to holomorphic functions on a neighborhood $U\subset\C^n$ of
    \[\bigl\{r\omega \in \C^n\,\,|\,\, r\in\C,\, \omega\in\Sph^{n-1},\, \arg(r)\in[0,\beta_0],\, |r|>R_0\bigr\}.\]
\end{enumerate}
We also require certain conormality conditions at infinity for the analytically continued coefficients:
\begin{enumerate}[resume]
    \item\label{assumption_conormal}  for any $\alpha\in\N_0^n$
    \[(|z|\p_z)^\alpha a_{jk}(z),\quad |z|(|z|\p_z)^\alpha b_j(h,z), \quad |z|^2(|z|\p_z)^\alpha V(h,z)\]
    are bounded uniformly in $z\in U$, $h\in[0,1]$.
\end{enumerate}
Finally, we assume that the analytically continued coefficients satisfy the following asymptotics at infinity:
\begin{enumerate}[resume]
    \item\label{assumption_asymptotics} as $|z|\to\infty$ for $z\in U\subset\C^n$, we have uniformly in $U$ and $h\in[0,1]$:
    \[a_{jk}(z) \to \delta_{jk}, \quad |z|b_j(h,z) \to 0, \quad z^2V(h,z) \to \mu >0.\]
\end{enumerate}
A simple example satisfying the above assumptions is the operator
\[P_h = -h^2\Delta + \brac{x}^{-2}.\]
In \S \ref{section_schwarzschild}, we will study a more interesting example, namely the Regge-Wheeler operator for a Schwarzschild black hole, which, away from the black hole horizon, fits into the above framework.

\begin{rmk}
\label{rmk_negative_beta}
    We chose to formulate this section in terms of a positive complex scaling angle $\beta_0>0$. This leads to a meromorphic continuation of the cutoff resolvent across the positive real axis to $\arg(\sigma)< 0$. Of course, the theory applies verbatim to negative scaling angles. That is, if assumption \ref{assumption_holomorphic} on the operator $P_h$ holds for a $\beta_0<0$, then we can meromorphically continue the curoff resolvent across the negative real axis to $\arg(\sigma)>\pi$. Propositions \ref{prop_resolvent_continuation} and \ref{prop_threshold_estimates} below continue to hold for the appropriate range of $\arg(\sigma)$.
\end{rmk}
\begin{rmk}
    Note that elliptic estimates near infinity (in the semiclassical sc-b calculus) could be combined with propagation of regularity estimates away from the complex scaling region. Thus, it would be more natural to make a non-trapping assumption on our operator $P_h$, as opposed to assuming global ellipticity as in \eqref{assumption_elliptic}. However, due to our assumption on the asymptotics of the coefficients, $P_h$ has no characteristic set at zero energy outside some large ball. Thus, assuming that the bicharacteristic flow at zero energy eventually leaves any compact region is equivalent to the global ellipticity of $P_h$.
\end{rmk}

\subsection{Complex scaling and analytic continuation of the cutoff resolvent}

Due to the positivity of the principal symbol, the spectral family $P_h-\sigma^2$ is invertible for $\Im(\sigma)\gg 0$ as a bounded operator $(P_h-\sigma^2)^{-1}:H^s(\R^n)\to H^{s+2}(\R^n)$ for each $h\in(0,1]$. An analytic continuation of the resolvent to the full upper half plane is easily obtained from the analytic Fredholm theorem. Thanks to the analyticity assumption on the coefficients \ref{assumption_holomorphic}, we can use complex scaling to further continue the cutoff resolvent $\chi(P_h-\sigma^2)^{-1}\chi$ to $\arg(\sigma)>-\beta_0$. We briefly review this approach, which is essentially standard, see in particular \cite[\S 7]{sjostrand_lectures_resonances}.
\begin{prop}
\label{prop_resolvent_continuation}
    Let $P_h$ be a second order semiclassical differential operator as in \eqref{P_h} satisfying the assumptions \ref{assumption_elliptic}-\ref{assumption_asymptotics}. Then for each $h\in(0,1]$, $s\in\R$ and any $\chi\in C_c^\infty(\R^n)$ the cutoff resolvent extends from $\Im(\sigma) \gg 0$ to a meromorphic family of bounded operators with poles of finite rank
    \[\chi(P_h-\sigma^2)^{-1}\chi:H^s(\R^n)\to H^{s+2}(\R^n)\]
    for all $\sigma\in\C\setminus\{0\}$ with $\arg(\sigma)\in(-\beta_0,\pi)$.
\end{prop}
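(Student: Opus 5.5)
We follow the standard complex scaling argument of \cite[\S 7]{sjostrand_lectures_resonances}, at fixed $h\in(0,1]$.

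\textbf{Contour and scaled operator.} Fix $\beta\in(0,\beta_0)$ and choose a totally real deformation $\Gamma_\beta=\{x+iF_\beta(x)\}\subset\C^n$. We take $F_\beta(x)=f_\beta(|x|)\frac{x}{|x|}$ with $f_\beta\geq0$, $f_\beta(r)=0$ for $r\leq R_1$ (where $R_1>R_0$ and $\supp\chi\subset B_{R_1}$), $f_\beta(r)=r\tan\beta$ for $r\geq R_2$, and $\arg(r+if_\beta(r))\in[0,\beta]$ throughout. Parametrizing $\Gamma_\beta$ by $x\in\R^n$, assumption \ref{assumption_holomorphic} makes $P_{h,\beta}:=P_h|_{\Gamma_\beta}$ a well-defined differential operator on $\R^n$. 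It agrees with $P_h$ on $B_{R_1}$, and near infinity assumptions \ref{assumption_conormal} and \ref{assumption_asymptotics} give
\[P_{h,\beta}=e^{-2i\beta}h^2\Delta+\text{lower-order terms with decaying coefficients}.\]
The symbol of $P_{h,\beta}-\sigma^2$ at infinity is therefore $e^{-2i\beta}|\xi|^2-\sigma^2+\bigO(|x|^{-1})$. For fixed $\sigma\neq0$ with $\arg\sigma\in(-\beta,\pi-\beta)$, we have $\arg(\sigma^2)\in(-2\beta,2\pi-2\beta)$, so $\sigma^2\notin e^{-2i\beta}[0,\infty)$. Hence this symbol is elliptic in the (non-semiclassical, $h$ fixed) scattering calculus on $\overline{\R^n}$. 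Combined with interior ellipticity of the second-order part, which is preserved along the contour, this shows that
\[P_{h,\beta}-\sigma^2:H^{s+2}_{\mathrm{sc}}\to H^s_{\mathrm{sc}}\]
is Fredholm. The parametrix error is compact because it gains both smoothness and decay. The family is holomorphic in $\sigma$ on this sector.

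\textbf{Invertibility at one point, and meromorphy.} For $\sigma=e^{i\pi/2-i\beta}t$ with $t\gg1$, we show invertibility by a direct estimate. Multiplying by $e^{2i\beta}$, the operator becomes $h^2\Delta+t^2+(\text{perturbation})$, where the perturbation is a relatively bounded error. On the compact region this error comes from $(1-e^{2i\beta})(h^2\Delta+V)$ together with the contour bending. We handle it by choosing $\sigma$ on a ray where $-\sigma^2$ has large positive real part relative to both $P_h$ and the scaled $P_{h,\beta}$. Concretely, we pick $\arg\sigma$ such that $-\sigma^2 e^{2i\theta}$ has positive real part for all $\theta\in[0,\beta]$, for instance $\arg\sigma=\pi/2-\beta/2$. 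A positivity/Gårding argument with a large spectral parameter then gives injectivity with a priori bounds, and the adjoint is treated in the same way. Analytic Fredholm theory then yields a meromorphic inverse $R_\beta(\sigma)=(P_{h,\beta}-\sigma^2)^{-1}$ with finite-rank poles on $\arg\sigma\in(-\beta,\pi-\beta)$.

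\textbf{Identification with the cutoff resolvent.} Since $\Gamma_\beta=\R^n$ on $\supp\chi$, we show that
\[\chi(P_h-\sigma^2)^{-1}\chi=\chi R_\beta(\sigma)\chi\qquad\text{for }\Im\sigma\gg0\]
on the overlapping sector. The argument is the usual one: for $f\in C_c^\infty(B_{R_1})$, the function $u=(P_h-\sigma^2)^{-1}f$ extends holomorphically in the radial variable into the sector, decays exponentially there because $\Im\sigma>0$ and the contour stays within $\arg r\in[0,\beta]$, and solves the equation along deformed contours $\Gamma_{\beta'}$. A continuity/uniqueness argument in $\beta'$ then identifies $u|_{\Gamma_\beta}$ with $R_\beta(\sigma)f$.

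We expect this step to be the main obstacle, since it needs exponential decay estimates for $u$ in complex directions. We would obtain these by first establishing the identity for $\beta'$ small via a perturbation (Combes–Thomas) argument and then increasing $\beta'$ by unique continuation of meromorphic families. Once the identity holds, $\chi R_\beta\chi$ provides the continuation to $\arg\sigma>-\beta$. Independence of $\beta$ follows from uniqueness of meromorphic continuation, so letting $\beta\uparrow\beta_0$ covers $(-\beta_0,\pi)$ minus $\{0\}$. The upper sector $\arg\sigma\in[\pi-\beta,\pi)$ is obtained by continuing the resolvent from $\Im\sigma\gg0$ through the upper half plane, as noted before the statement. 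Finally, mapping properties $H^s\to H^{s+2}$ follow from elliptic regularity of the cutoff operators on compact sets.
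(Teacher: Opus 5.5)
The gap is in the identification step, and that step is the real content of the statement. Without it you only have some meromorphic family $\chi R_\beta(\sigma)\chi$, not a continuation of $\chi(P_h-\sigma^2)^{-1}\chi$.

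You assert that $u=(P_h-\sigma^2)^{-1}f$ continues holomorphically into the sector with exponential decay, but the tools you name do not produce this:
\begin{itemize}
\item A Combes--Thomas/perturbation argument gives decay of $u$ on $\R^n$, and closeness of $R_{\beta'}(\sigma)$ to $R_0(\sigma)$ for small $\beta'$. It gives neither a holomorphic continuation of $u$ off $\R^n$ nor an exact identity.
\item Unique continuation in $\sigma$ only propagates an identity you already have between two fixed contours. It cannot compare $\Gamma_{\beta'}$ with $\Gamma_{\beta''}$.
\item The decay claim is also misstated. The continuation of an outgoing solution decays along $e^{i\theta}\R_+$ only if $\arg\sigma+\theta\in(0,\pi)$, so you need $\sigma\in\Lambda_0\cap\Lambda_\beta$, not merely $\Im\sigma>0$.
\end{itemize}
To make your route work you would need quantitative analytic elliptic regularity: continuation off a contour with growth at most $e^{C|\Im z|/h}$ (via dyadic rescaling), strong enough to beat the real-domain decay for small angle increments, chained finitely often.

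The standard device avoids continuing $u$ at all: complexify the deformation parameter. For example, take $\kappa_t(x)=e^{t\psi(|x|)}x$ with $\psi=0$ near $\supp\chi$ and $\psi=1$ near infinity.
\begin{itemize}
\item For real $t$ near $0$, $\kappa_t$ is a real diffeomorphism equal to the identity near $\supp\chi$, so the conjugated operator has the same cutoff resolvent.
\item Fix $\sigma$ with $\arg\sigma\in(0,\pi-\beta_0)$ and $|\sigma|$ large. Then $t\mapsto\chi R_t(\sigma)\chi$ is holomorphic on $\{|\Re t|<\delta,\ 0<\Im t<\beta_0\}$, continuous up to $\Im t=0$, and constant there, hence constant.
\item The point $t=i\beta$ with $\beta<\beta_0$ is the scaled contour.
\end{itemize}
The paper does not reprove this either; it cites \cite[Prop.\ 7.5]{sjostrand_lectures_resonances} and \cite[Props.\ 3.12, 3.16]{stucker_QNM_Kerr} for exactly this contour independence.

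Apart from this, your Fredholm step matches the paper's. Obtaining invertibility at one point by direct positivity is a legitimate, more elementary substitute for the paper's route (index zero by deformation from $\beta=0$, then $\beta$-independence of $\dim\ker(P_{\beta,h}-\sigma^2)$). But it only works for $\beta<\pi/2$:
\begin{itemize}
\item a graph contour $x+iF_\beta(x)$ with $f_\beta(r)=r\tan\beta$ cannot reach angles $\geq\pi/2$;
\item your requirement $\Re(-\sigma^2e^{2i\theta})>0$ for all $\theta\in[0,\beta]$ fails there for every $\sigma$.
\end{itemize}
Since the statement allows $\beta_0\in(0,\pi)$, you would miss the sector $\arg\sigma\in(-\beta_0,-\pi/2]$. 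The fix is to use the polar contour $e^{i\phi_\beta(|x|)}x$ as the paper does, and replace positivity by a large-parameter elliptic estimate with $\tilde h=|\sigma|^{-1}$. For $\arg\sigma=\pi/2-\beta/2$ one has $\bigl|e^{-2i\theta}|\xi|^2+e^{-i\beta}\bigr|\geq c(1+|\xi|^2)$ for all $\theta\in[0,\beta]$ whenever $\beta<\pi$.

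You also need $R_1$ large and the bending small, not just $R_1>R_0$. This keeps $a_{jk}(z)\approx\delta_{jk}$ along the contour so the deformed second-order part stays elliptic; it is how the paper fixes $R_1$ and $\epsilon$.
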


The method of complex scaling consists in deforming $\R^n$ to a family of maximally totally real submanifolds $\Gamma_\beta \subset \C^n$, which agree with $\R^n$ in $|z|<R_1$ and with $e^{i\beta}\R^n$ in $|z|>R_2$. Restricting the holomorphic extension of $P_h$ to $\Gamma_\beta$ gives a deformed operator $P_{\beta,h}$ on the real manifold $\Gamma_\beta$. One can then show that $P_{\beta,h}-\sigma^2$ is invertible as a meromorphic family of bounded operators (no cutoff necessary) for all
\begin{equation}
\label{Lambda_beta}
    \sigma\in\Lambda_\beta = \{\sigma\in\C\setminus\{0\},\, \arg(\sigma)\in(-\beta,\pi-\beta)\}.
\end{equation}
Cutting off to the ball $B_{R_1}\subset\R^n$, where no deformation was applied, one can show that $\chi(P_{\beta,h}-\sigma^2)^{-1}\chi$ is independent of the scaling angle $\beta$. This provides the analytic continuation of the cutoff resolvent, whose poles agree with the eigenvalues of the complex scaled operator $P_{\beta,h}$ in $\Lambda_\beta$.

The contour $\Gamma_\beta$ will be the image of a smooth injective map $F_\beta:\R^n\to\C^n$ of the form
\begin{equation}
\label{F_beta}
    \Gamma_\beta = F_\beta(\R^n), \quad F_\beta(x) = e^{i\phi_\beta(|x|)}x,
\end{equation}
for a smooth phase function $\phi_\beta:\R_+\to[0,\beta]$ satisfying
\[\phi_\beta(r)=0 \text{ for } r<R_1, \quad \phi_\beta(r)=\beta \text{ for } r>R_2, \quad 0\leq r\p_r\phi_\beta(r)\leq\epsilon \,\,\,\forall\, r\in\R_+.\]
Note that such a function can be constructed for any $R_1,R_2,\epsilon>0$ as long as $R_2$ is chosen large enough depending on $R_1,\epsilon$.
We define Sobolev spaces $H^s(\Gamma_\beta)$ on $\Gamma_\beta$ using the coordinate chart $F_\beta^{-1}$. Thus, $H^s(\Gamma_\beta) = (F_\beta)_*H^s(\R^n)$ is unitarily equivalent to $H^s(\R^n)$. Abusing notation, we will frequently write estimates for the complex scaled operator with respect to the norm on $H^s(\R^n)$, understanding that this should really be pulled back to the manifold $\Gamma_\beta$.

Taking $\beta\leq \beta_0$ and $R_1>R_0$, we have that $\Gamma_\beta\setminus B_{R_1}$ is contained in the open set $U\subset\C^n$, where $P_h$ defines a holomorphic differential operator by assumption \ref{assumption_holomorphic}. Thus, we can restrict $P_h$ to the totally real submanifold $\Gamma_\beta$ to obtain the complex scaled operator $P_{\beta,h} = P_h|_{\Gamma_\beta}$. Using $F_\beta^{-1}:\Gamma_\beta\to\R^n$ as a coordinate chart, we have
\begin{equation}
\begin{split}
\label{P_beta}
    P_{\beta,h} = &\sum_{1\leq j,k\leq n} a_{jk}(F_\beta(x))h^2(dF_\beta^{-T}D_{x})^j(dF_\beta^{-T}D_{x})^k \\ + &\sum_{1\leq j\leq n} b_j(h,F_\beta(x)) h(dF_\beta^{-T}D_{x})^j + V(h,F_\beta(x)),
\end{split}
\end{equation}
where $dF_\beta^{-T}$ denotes the inverse transpose of the differential of $F_\beta$.

\begin{proof}[Proof of Proposition \ref{prop_resolvent_continuation}]
    Fix $h\in (0,1]$. In $|x|<R_1$, $P_{\beta,h}$ agrees with $P_h$ and assumption \ref{assumption_elliptic} ensures that $P_h$ is elliptic (in the classical sense) in this region. In the complex scaling region $|x|>R_1$, we have
    \begin{equation}
    \label{transition_region}
        (dF_\beta(x)^{-1})^{jk} = e^{-i\phi_\beta(x)}\Bigl(\delta^{jk} - \frac{i|x|\phi_\beta'(|x|)}{1+i|x|\phi_\beta'(|x|)}\frac{x^j}{|x|}\frac{x^k}{|x|}\Bigr).
    \end{equation}
    Using $|x|\phi_\beta'(|x|) < \epsilon$, we find that
    \begin{equation*}
        (dF_\beta^{-T}\cdot\xi)^j = e^{-i\phi_\beta(x)}\xi^j + \bigO(\epsilon|\xi|),
    \end{equation*}
    and the principal symbol is given by
    \[\sigma(P_{\beta,h}) = h^2e^{-2i\phi_\beta(x)}\sum_{1\leq j,k\leq n} a_{jk}(e^{i\phi_\beta(|x|)}x)\xi^j\xi^k + \bigO(\epsilon|\xi|^2).\]
    Since $a_{jk}(z)\to\delta_{jk}$ as $|z|\to\infty$, see assumption \ref{assumption_asymptotics}, we can choose $R_1$ large enough and $\epsilon$ small enough, so that $|\sigma(P_{\beta,h})| > c|\xi|^2$ uniformly in $|x|>R_1$. Thus, $P_{\beta,h}$ is uniformly elliptic on $\R^n$. Moreover, $P_{\beta,h}-\sigma^2$ is scattering elliptic for $\sigma\in\C\setminus\{0\}$, $\arg(\sigma)\in(-\beta,\pi-\beta)$. Indeed, using that $F_\beta(x)=e^{i\beta}x$ in $|x|>R_2$ and $b_j(h,z),V(h,z) = \bigO(|z|^{-1})$, the scattering principal symbol is given near infinity by
    \[\sigma_{\mathrm{sc}}(P_{\beta,h}-\sigma^2) = h^2e^{-2i\beta}\sum_{1\leq j,k\leq n} a_{jk}(e^{i\beta}x)\xi^j\xi^k - \sigma^2 = e^{-2i\beta}\bigl(h^2|\xi|^2 - (e^{i\beta}\sigma)^2\bigr) + \lilO_{|x|\to\infty}(|\xi^2|),\]
    which is non-zero for $\arg(\sigma)\in(-\beta,\pi-\beta)$.

    Scattering elliptic theory, see for instance \cite{vasy_minicourse}, now gives the semi-Fredholm estimate
    \[\|u\|_{H^s(\R^n)} \leq C\bigl(\|(P_{\beta,h}-\sigma^2)u\|_{H^{s-2}(\R^n)} + \|u\|_{\brac{x}^{-N}H^{-N}(\R^n)}\bigr)\]
    for all $s,N\in\R$.
    Arguing similarly for the formal adjoint $(P_{\beta,h}-\sigma^2)^*$, we see that $P_{\beta,h}-\sigma^2$ defines an analytic family of Fredholm operators in $\Lambda_\beta$, see \eqref{Lambda_beta}.

    Now, the original operator $P_h=P_{0,h}$ has positive principal symbol, so $P_h-\sigma^2$ is invertible for $\Im(\sigma)$ large enough. (This follows for instance by taking $\sigma=\tilde{h}^{-1}\omega$, where $\Im(\omega)>0$ and $\tilde{h}=|\sigma|^{-1}$ is a new semiclassical parameter, and using semiclassical elliptic estimates for $\tilde{h}^2(P_h-\sigma^2)$ and its adjoint to obtain invertibility for $\tilde{h}$ small enough.) Thus, $P_{0,h}$ is a Fredholm operator of index zero and the same holds for $P_{\beta,h}$ by the stability of the Fredholm index under continuous deformations. The arguments of \cite[Prop.\ 7.5]{sjostrand_lectures_resonances}, see also \cite[Prop.\ 3.12]{stucker_QNM_Kerr}, show that $\dim(\ker(P_{\beta,h}-\sigma^2))$ is independent of $\beta$ for $\sigma\in\Lambda_\beta$. Thus, $P_{\beta,h}-\sigma^2$ is invertible for some $\sigma\in\C$ and the analytic Fredholm theorem implies that $(P_{\beta,h}-\sigma^2)^{-1}: H^s(\R^n)\to H^{s+2}(\R^n)$ exists as a meromorphic family of bounded operators on $\sigma\in\Lambda_\beta$. Finally, as in \cite[Prop.\ 3.16]{stucker_QNM_Kerr}, one can show that $\chi(P_{\beta,h}-\sigma^2)^{-1}\chi$ is independent of $\beta$ for a cutoff $\chi$ supported away from the complex scaling region. Setting $\chi(P_h-\sigma^2)^{-1}\chi = \chi(P_{\beta,h}-\sigma^2)^{-1}\chi$ thus provides a meromorphic continuation of the cutoff resolvent to $\cup_{\beta\in[0,\beta_0]}\Lambda_\beta$.
\end{proof}

\subsection{Semiclassical resolvent estimates near zero energy}
We will now show that the spectral family of the complex scaled operator, $P_{\beta,h}-\sigma^2 \in \rho_\tf^2\Diff^2_\hscb(\R^n)$, defines a weighted semiclassical scattering-b transition operator, see \ref{section_hscb_calculus}, which is moreover elliptic on all of $\R^n$ for $|\sigma|$ small enough. Elliptic regularity in the semiclassical sc-b calculus then gives the following uniform estimate for the complex scaled resolvent.
\begin{prop}
\label{prop_threshold_estimates}
    Let $P_h\in\Diff_\h^2(\R^n)$ satisfy assumptions \ref{assumption_elliptic}-\ref{assumption_asymptotics} and let $P_{\beta,h}$ be the complex scaled operator. Then for any $s\in\R$, $\delta>0$ there exist $h_0, \sigma_0 > 0$ such that the following resolvent estimate
    \begin{equation}
    \label{resolvent_estimate}
        \|(P_{\beta,h}-\sigma^2)^{-1}f\|_{H^s_\scbnorm(\R^n)} \leq C\|f\|_{\rho_\tf^2H^{s-2}_\scbnorm(\R^n)}, \quad \forall\, f\in \rho_\tf^2H^{s-2}_\scbnorm(\R^n),
    \end{equation}
    holds uniformly in $h\in(0,h_0]$, $\sigma\in\{\sigma\in\C\setminus\{0\},\, |\sigma|\leq\sigma_0,\, \arg(\sigma)\in[-\beta+\delta,\pi-\beta-\delta]\}$.
\end{prop}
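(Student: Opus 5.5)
The plan is to show that $P_{\beta,h}-\sigma^2\in\rho_\tf^2\Diff^2_\hscb(\R^n)$ is \emph{elliptic everywhere} on the microlocalization locus $\M$. Then Proposition \ref{prop_hscb_elliptic_estimate} with $B=I$ gives a global estimate whose error term can be absorbed. Finally, invertibility, which is known from Proposition \ref{prop_resolvent_continuation} once no kernel is present, turns the estimate into \eqref{resolvent_estimate}.

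\textbf{Step 1: membership in the calculus.} Write $hD_{x}=(h\rho_\scf\brac{x})^{-1}\cdot h\rho_\scf\brac{x}D_x$ and note $(\rho_\scf\brac{x})^{-1}=\rho_\tf$. Then $h^2D^2\in\rho_\tf^2\Diff^2_\hscb$, while $hb_j D_{x^j}$ and $V$ are $\bigO(\brac{x}^{-1})h D$ and $\bigO(\brac{x}^{-2})$. Using $\brac{x}^{-1}\leq\rho_\tf$ together with the conormality assumption \ref{assumption_conormal}, these terms also lie in $\rho_\tf^2\Diff^2_\hscb$. The same holds for $\sigma^2$, since $|\sigma|\leq\rho_\tf$. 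The contour factors $dF_\beta^{-T}$ from \eqref{transition_region} are conormal because $r\p_r\phi_\beta$ is bounded with all b-derivatives.

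\textbf{Step 2: the rescaled principal symbol.} In the trivialization with $\xi$ dual to $h\rho_\scf\brac{x}D_x$, consider $\rho_\tf^{-2}\sigma_\hscb(P_{\beta,h}-\sigma^2)$ in the region $|x|>R_2$. It equals, modulo $o(1)$ as $|x|\to\infty$,
\[
(1+|\sigma|\brac{x})^{-2}\bigl(e^{-2i\beta}(|\xi|^2+\mu)-\sigma^2\brac{x}^2\bigr).
\]
Setting $t=|\sigma|\brac{x}/(1+|\sigma|\brac{x})\in[0,1]$, this becomes $e^{-2i\beta}\bigl((1-t)^2(|\xi|^2+\mu)-(e^{i\beta}\hat\sigma)^2t^2\bigr)$ with $\hat\sigma=\sigma/|\sigma|$. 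Since $\arg(e^{i\beta}\hat\sigma)\in[\delta,\pi-\delta]$, the number $-(e^{i\beta}\hat\sigma)^2$ lies in a closed sector avoiding $[0,\infty)$. The quantity $(1-t)^2(|\xi|^2+\mu)$ is nonnegative, and it is strictly positive when $t<1$ because $\mu>0$. So the sum of the two terms never vanishes, and it is bounded below by $c(\brac{\xi}^2(1-t)^2+t^2)\geq c'\brac{\xi}^2\rho_\tf^2/\brac{x}^{-2}$ appropriately normalized. This is uniform on all faces $\scf,\tf,\zf$ and at fiber infinity. The $o(1)$ errors are absorbed by choosing $R_2$ large.

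In the compact region $|x|\leq R_2$, the weights are comparable to constants and $\rho_\scf\sim1$. The symbol there is $\sigma_\h(P_{\beta,h})-\sigma^2$. This is elliptic for $|\sigma|\leq\sigma_0$ small: for $|x|<R_1$ by assumption \ref{assumption_elliptic}, and in the transition region by the same positivity. In the transition region the scaling satisfies $r\phi'_\beta\leq\epsilon$, so the symbol stays near $e^{-2i\phi}(|\xi|^2+\mu|x|^{-2})$, which has argument in $[-2\beta,0]$ and is nonzero. This requires $R_1$ large enough that the asymptotics \ref{assumption_asymptotics} hold to accuracy $\ll\mu$.

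\textbf{Step 3: estimate and inversion.} Ellipticity on all of $\M$ gives, by Proposition \ref{prop_hscb_elliptic_estimate},
\[
\|u\|_{H^s_\scbnorm}\leq C\bigl(\|(P_{\beta,h}-\sigma^2)u\|_{\rho_\tf^2H^{s-2}_\scbnorm}+\|u\|_{h^{N}\rho_\scf^{N}H^{-N}_\scbnorm}\bigr).
\]
Since $\rho_\scf\leq1$, the error term is bounded by $h^N\|u\|_{H^s_\scbnorm}$. To make this precise, one instead builds an elliptic parametrix $Q$ with remainder in $h^\infty\rho_\scf^\infty\Psi^{-\infty}_\hscb$. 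Either way, for $h\leq h_0$ the error is absorbed and we get the bound for all $u$ in the domain.

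Injectivity then follows, so $\sigma$ is not an eigenvalue. The Fredholm index is zero (Proposition \ref{prop_resolvent_continuation}), so the inverse exists and satisfies \eqref{resolvent_estimate}.

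\textbf{Main obstacle.} The hard step is the uniform lower bound in Step 2 near the corner $\tf\cap\scf\cap\{h=0\}$ and in the transition region of the contour. There one must check carefully that $\mu>0$ (repulsivity) supplies ellipticity at $t\to0$. One must also check that the cone condition $\arg(\sigma)\in[-\beta+\delta,\pi-\beta-\delta]$ excludes cancellation when $\phi_\beta<\beta$. I would handle the latter by confining $\phi_\beta<\beta$ to $|x|\leq R_2$, where $|\sigma|\brac{x}\leq\sigma_0R_2$ is small, so the $\sigma^2$ term is negligible there.
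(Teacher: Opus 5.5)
Your plan is the paper's proof. You factor out $\rho_\tf^2$, prove semiclassical sc-b ellipticity in $|x|>R_2$ from $\rho_\scf+\rho_\zf=1$, $\mu>0$ and the sector condition, and use ordinary semiclassical ellipticity on a bounded region with $\sigma_0^2$ small compared to $\mu R_2^{-2}$ (the paper takes $\sigma_0^2<\frac12\mu(R_2+2)^{-2}$). You then absorb the $h^N$ error and conclude by injectivity plus index zero. Applying Proposition \ref{prop_hscb_elliptic_estimate} globally with $B=I$, rather than patching the sc-b estimate near infinity with a standard semiclassical estimate via Remark \ref{rmk_norm_equiv}, is only a cosmetic difference.

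Step 2 does contain two errors you should fix.

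\textbf{The fiber variable.} With $\xi$ dual to $h\rho_\scf\brac{x}D_x$, as you state, the symbol near infinity is
\[
\sigma_\hscb\bigl(\rho_\tf^{-2}(P_{\beta,h}-\sigma^2)\bigr)=e^{-2i\beta}\bigl(|\xi|^2+\mu\rho_\scf^2\bigr)-e^{2i\arg(\sigma)}\rho_\zf^2+\lilO_{|x|\to\infty}\bigl(|\xi|^2+\rho_\scf^2\bigr).
\]
There is no factor $\rho_\scf^2=(1-t)^2$ in front of $|\xi|^2$. Your formula is correct only in the b-fiber variable $\eta$ dual to $h\brac{x}D_x$, where $\xi=\rho_\scf\eta$. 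Read in the sc-b variable, it loses ellipticity at fiber infinity over the scattering face $\{\rho_\scf=0\}$. Likewise, your final bound $c'\brac{\xi}^2\rho_\tf^2\brac{x}^{2}=c'\brac{\xi}^2\rho_\scf^{-2}$ is wrong. The correct statement is that $(1-t)^2\brac{\eta}^2+t^2$ is comparable to $\brac{\rho_\scf\eta}^2=\brac{\xi}^2$.

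\textbf{The sector.} Since $\arg(e^{i\beta}\hat\sigma)\in[\delta,\pi-\delta]$, the number $-(e^{i\beta}\hat\sigma)^2$ has argument in $[-\pi+2\delta,\pi-2\delta]$. So it avoids a neighbourhood of the \emph{negative} real axis, and it can be positive, for instance when $e^{i\beta}\hat\sigma=i$. This is what gives $|A+B|\geq c_\delta(A+|B|)$ for $A\geq 0$. If $B$ merely avoided $[0,\infty)$, as you wrote, $A+B$ could vanish.

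With these two corrections, your argument goes through exactly as in the paper.
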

\begin{proof}
The proposition will follow from the ellipticity of $P_{\beta,h}-\sigma^2$ in the semiclassical scattering-b transition calculus. We prove global elliptic estimates in this calculus: For any $s,N\in\R$ and for $\arg(\sigma)\in[-\beta+\delta,\pi-\beta-\delta]$ with $|\sigma|$ small enough, we have
\begin{equation}
\label{global_elliptic_estimate}
    \|u\|_{H^s_\scbnorm(\R^n)} \leq C\bigl(\|(P_{\beta,h}-\sigma^2)u\|_{\rho_\tf^2H^{s-2}_\scbnorm(\R^n)} + h^N\|u\|_{H^{-N}_\scbnorm(\R^n)}\bigr), \quad \forall\,u\in H^s_\scbnorm(\R^n),
\end{equation}
uniformly in $h,\sigma$.

We begin by proving a local elliptic estimate near infinity. Recall that $F_\beta(x)=e^{i\beta}x$ in $|x|>R_2$, so in this region \eqref{P_beta} becomes
\[P_{\beta,h} = e^{-2i\beta}\sum_{1\leq j,k\leq n} a_{jk}(e^{i\beta}x)h^2D_{x^j}D_{x^k} + e^{-i\beta}\sum_{1\leq j\leq n} b_j(h,e^{i\beta}x) hD_{x^j} + V(h,e^{i\beta}x).\]
Rewriting this in terms of the semiclassical sc-b vector fields $\V_\hscb^j = h\frac{\brac{x}}{1+|\sigma|\brac{x}}D_{x^j}$, we find
\begin{equation*}
\begin{split}
    P_{\beta,h} &= e^{-2i\beta}\sum_{1\leq j,k\leq n} a_{jk}(e^{i\beta}x)\Bigl(\frac{(1+|\sigma|\brac{x})^2}{\brac{x}^2}\V_\hscb^j\V_\hscb^k + ih\frac{x^j}{\brac{x}^3}\V_\hscb^k\Bigr) \\
    &+ e^{-i\beta}\sum_{1\leq j\leq n} \frac{1+|\sigma|\brac{x}}{\brac{x}}b_j(h,e^{i\beta}x) \V_\hscb^j + V(h,e^{i\beta}x).
\end{split}
\end{equation*}
Using the weights defined in \eqref{bdfs} and writing $\brac{x}^{-1} = \rho_\tf\rho_\scf$, $|\sigma| = \rho_\tf\rho_\zf$, we can factor out an overall weight of $\rho_\tf^2$ and write the spectral family of the complex scaled operator as
\begin{equation*}
\begin{split}
    \rho_\tf^{-2}(P_{\beta,h}-\sigma^2) &= e^{-2i\beta}\sum_{1\leq j,k\leq n} a_{jk}(e^{i\beta}x)\bigl(\V_\hscb^j\V_\hscb^k + ih\rho_\scf^2\frac{x^j}{\brac{x}}\V_\hscb^k\bigr) \\
    &+ e^{-i\beta}\rho_\scf\sum_{1\leq j\leq n} \brac{x}b_j(h,e^{i\beta}x) \V_\hscb^j + \rho_\scf^2\brac{x}^2V(h,e^{i\beta}x) - e^{2i\arg(\sigma)}\rho_\zf^2.
\end{split}
\end{equation*}

We see that $P_{\beta,h}-\sigma^2$ defines an element of $\rho_\tf^2\Diff^2_\hscb(\R^n)$, see \eqref{diff_ops}. The required conormality of the coefficients holds thanks to assumption \ref{assumption_conormal}. Its principal symbol in the semiclassical sc-b calculus, see \S\ref{section_symbols_pseudos}, is given by
\begin{equation*}
\begin{split}
    \sigma_\hscb\bigl(\rho_\tf^{-2}(P_{\beta,h}-\sigma^2)\bigr)(x,\xi) &= e^{-2i\beta}\sum_{1\leq j,k\leq n} a_{jk}(e^{i\beta}x)\xi^j\xi^k + e^{-i\beta}\rho_\scf\sum_{1\leq j\leq n} \brac{x}b_j(0,e^{i\beta}x) \xi^j \\
    &+ \rho_\scf^2\brac{x}^2V(0,e^{i\beta}x) - e^{2i\arg(\sigma)}\rho_\zf^2.
\end{split}
\end{equation*}

By assumption \ref{assumption_asymptotics} on the asymptotics of the coefficients, we have
\[a_{jk}(e^{i\beta}x) = \delta_{jk} + \lilO_{|x|\to\infty}(1), \quad \brac{x}b(0,e^{i\beta}x) = \lilO_{|x|\to\infty}(1), \quad \brac{x}^2V(0,e^{i\beta}x) = e^{-2i\beta}\mu + \lilO_{|x|\to\infty}(1).\]
Furthermore, by Young's inequality we can estimate
\[\Bigl|\rho_\scf\sum_{1\leq j\leq n} \brac{x}b_j(0,e^{i\beta}x) \xi^j\Bigr| \leq \sum_{1\leq j\leq n}\bigl|\brac{x}b_j(0,e^{i\beta}x)\bigr|\bigl(|\xi|^2 + \rho_\scf^2\bigr).\]
Altogether, we find
\[\sigma_\hscb\bigl(\rho_\tf^{-2}(P_{\beta,h}-\sigma^2)\bigr)(x,\xi) = e^{-2i\beta}\bigl(|\xi|^2 + \mu\rho_\scf^2\bigr) - e^{2i\arg(\sigma)}\rho_\zf^2 + \lilO_{|x|\to\infty}\bigl(|\xi|^2 + \rho_\scf^2\bigr).\]
Recalling that $\rho_\scf + \rho_\zf = 1$, we can choose $R_2$ large enough, so that
\[\bigl|\sigma_\hscb\bigl(\rho_\tf^{-2}(P_{\beta,h}-\sigma^2)\bigr)(x,\xi)\bigr| \geq c\brac{\xi}^2,\]
for some $c>0$, uniformly in $|x|>R_2$, $\xi\in\R^n$, $\arg(\sigma)\in[-\beta+\delta,\pi-\beta-\delta]$.

Thus, $P_{\beta,h}-\sigma^2 \in \rho_\tf^2\Diff^2_\hscb(\R^n)$ is semiclassically sc-b elliptic in $|x|>R_2$. Using microlocal elliptic estimates in this calculus, Proposition \ref{prop_hscb_elliptic_estimate}, we can estimate
\begin{equation}
\label{hscb_estimate_near_infty}
    \|\chi u\|_{H^s_\scbnorm(\R^n)} \leq C\bigl(\|\tilde{\chi}(P_{\beta,h}-\sigma^2)u\|_{\rho_\tf^2H^{s-2}_\scbnorm(\R^n)} + h^N\|u\|_{H^{-N}_\scbnorm(\R^n)}\bigr),
\end{equation}
where $\chi,\tilde{\chi}\in C^\infty(\R^n)$ satisfy $\supp(\chi)\subset\{|x|>R_2\}$, $\chi=1$ on $\{|x|\geq R_2+1\}$ and $\tilde{\chi}=1$ on $\supp(\chi)$. (Here, we do not care about the extra decay in $\rho_\scf$ of the error term.)

We now show that $P_{\beta,h}-\sigma^2$ is elliptic in the usual semiclassical calculus when $x$ is restricted to a bounded subset and $|\sigma|\leq\sigma_0$ is taken small enough. Indeed, away from complex scaling, i.e.\ in $|x|<R_1$, this follows from the positivity of the semiclassical principal symbol, see assumption \ref{assumption_elliptic}, if we choose $\sigma_0^2<\inf_{|x|<R_1,\xi\in\R^n}(\sigma_\h(P_h)(x,\xi))$.

Near the transition region, $|x|\in(R_1,R_2)$, the semiclassical principal symbol of $P_{\beta,h}$, see \eqref{P_beta}, is given by
\[p_{h,\beta}(x,\xi) = \sum_{1\leq j,k\leq n} a_{jk}(F_\beta(x))(dF_\beta^{-T}\xi)^j(dF_\beta^{-T}\xi)^k + \sum_{1\leq j\leq n} b_j(0,F_\beta(x))(dF_\beta^{-T}\xi)^j + V(0,F_\beta(x)).\]
Using the asymptotics of the coefficients \ref{assumption_asymptotics}, estimating the first order term by Young's inequality and writing $dF_\beta(x)^{-T} = e^{-i\phi_\beta(|x|)} + \bigO(\epsilon)$, see \eqref{transition_region}, we find
\begin{equation*}
    p_{h,\beta}(x,\xi) = e^{-2i\phi_\beta(|x|)}\bigl(|\xi|^2+\mu|x|^{-2}\bigr) + \bigO(\epsilon\xi^2) + \lilO_{|x|\to\infty}(|\xi|^2)+\lilO_{|x|\to\infty}(|x|^{-2}).
\end{equation*}
Thus, we can choose $\epsilon$ small enough and $R_1$ large enough (making $R_2$ larger if necessary) so that
\[\bigl|p_{h,\beta}(x,\xi)\bigr| \geq \frac{1}{2}\bigl(|\xi|^2 + \mu|x|^{-2}\bigr), \quad\forall\, |x|\in[R_1,R_2+2],\,\xi\in\R^n.\]
Taking $\sigma_0^2 < \frac{1}{2}\mu(R_2+2)^{-2}$, we see that $P_{\beta,h}-\sigma^2$ is semiclassically elliptic in $|x|\leq R_2+2$, uniformly in $|\sigma|\leq\sigma_0$. Semiclassical elliptic estimates, see e.g.\ \cite[Thm.\ E.33]{dyatlov_zworski_scattering_book}, now give
\begin{equation}
\label{semiclassical_estimate_bounded_x}
    \|\chi u\|_{H^s_h(\R^n)} \leq C\bigl(\|\tilde{\chi}(P_{\beta,h}-\sigma^2)u\|_{H^{s-2}_h(\R^n)} + h^N\|\tilde{\chi}u\|_{H^{-N}_h(\R^n)}\bigr),
\end{equation}
where $\supp(\chi),\supp(\tilde{\chi}) \subset \{|x|<R_2+2\}$, $\chi=1$ on $\{|x|\leq R_2+1\}$, $\tilde{\chi}=1$ on $\supp(\chi)$ and $H^s_h(\R^n)$ is the usual semiclassical Sobolev space.

On bounded domains, the usual semiclassical norm and the semiclassical sc-b norm are equivalent, see Remark \ref{rmk_norm_equiv}. Thus, combining the estimate for bounded $|x|$ in \eqref{semiclassical_estimate_bounded_x} with the estimate near infinity in \eqref{hscb_estimate_near_infty} gives the global elliptic estimate in \eqref{global_elliptic_estimate}. Choosing $h\leq h_0$ small enough, we can absorb the error term in \eqref{global_elliptic_estimate} into the left hand side and obtain the bound
\[\|u\|_{H^s_\scbnorm(\R^n)} \leq C\|(P_{\beta,h}-\sigma^2)u\|_{\rho_\tf^2H^{s-2}_\scbnorm(\R^n)}, \quad \forall\,u\in H^s_\scbnorm(\R^n),\]
uniformly in $h\in(0,h_0]$, $\sigma\in\{\sigma\in\C\setminus\{0\},\, |\sigma|\leq\sigma_0,\, \arg(\sigma)\in[-\beta+\delta,\pi-\beta-\delta]\}$. This in particular shows that the $\ker(P_{\beta,\sigma}-\sigma^2)$ is empty, which implies the invertibility of the index zero Fredholm operator $P_{\beta,\sigma}-\sigma^2$. The desired resolvent estimate follows by writing $u=(P_{\beta,\sigma}-\sigma^2)^{-1}f$.
\end{proof}

The uniform estimates for the cutoff resolvent in Thm. \ref{thm_threshold_estimates} follow immediately from the more precise estimates on the complex scaled resolvent Proposition \ref{prop_threshold_estimates}

\begin{proof}[Proof of Thm.\ \ref{thm_threshold_estimates}]
    Choosing $R_1$ large enough so that $\supp(\chi)\subset \{|x|\leq R_1\}$, the complex scaling contour $\Gamma_\beta$ coincides with $\R^n$ on the support of $\chi$ and we have 
    \[\chi(P_h-\sigma^2)^{-1}\chi = \chi(P_{\beta,h}-\sigma^2)^{-1}\chi.\]
    Recall that the semiclassical scattering-b transition norms are equivalent to the usual Sobolev norms for functions supported in a fixed bounded subset of $\R^n$, see Remark \ref{rmk_norm_equiv}. Moreover, $\rho_\tf^{-1}\leq\brac{x}\leq C<\infty$ on $\supp(\chi)$. Thus, Proposition \ref{prop_threshold_estimates} leads to
    \[\|\chi(P_h-\sigma^2)^{-1}\chi f\|_{H^s_h(\R^n)} \lesssim \|\chi(P_{\beta,h}-\sigma^2)^{-1}\chi f\|_{H^s_\scbnorm(\R^n)} \lesssim \|\chi f\|_{\rho_\tf^2H^{s-2}_\scbnorm(\R^n)} \lesssim \|f\|_{H^{s-2}_h(\R^n)}\]
    uniformly in $h\leq h_0$, $|\sigma|\leq\sigma_0$, $\arg(\sigma)\in[-\beta+\delta,\pi-\beta+\delta]$. Since this holds for all $\beta\in[0,\beta_0]$, we obtain the uniform estimate in the theorem.
\end{proof}

\begin{rmk}
\label{rmk_gamma_calculus}
    As we saw above, the repulsive inverse-square nature of the potential term $V(h,x)$ leads to uniform ellipticity near infinity of $P_h-\sigma^2$ in the semiclassical scattering-b transition calculus, which in turn implies the uniform resolvent estimate of Thm.\ \ref{thm_threshold_estimates}. By modifying the pseudodifferential operator calculus of \S \ref{section_hscb_calculus}, one can use a similar strategy to deal with operators that behave near infinity like a semiclassical Schrödinger operator with a different repulsive long-range potential. Indeed, assume that $P_h \sim h^2\Delta + \mu\brac{x}^{-2\gamma}$ near infinity with $\mu>0$ and $\gamma\in (0,1)$. Using the scaled bounded geometry perspective as in \S \ref{section_hscb_calculus}, we can define a calculus whose differential operators are built from the vector fields
    \[\V^j = h\frac{\brac{x}^\gamma}{1+|\sigma|\brac{x}^\gamma}D_{x^j}.\]
    Note that $\V^j$ are semiclassical scattering vector fields at fixed $|\sigma|>0$, whereas at $|\sigma|=0$ we have $\V^j = h\brac{x}^\gamma D_{x^j}$.
    Modifying the weight functions in \eqref{bdfs} to
    \begin{equation*}
        \rho_\scf = \frac{1}{1+|\sigma|\brac{x}^\gamma}, \quad \rho_\tf = \frac{1+|\sigma|\brac{x}^\gamma}{\brac{x}^\gamma}, \quad \rho_\zf = \frac{|\sigma|\brac{x}^\gamma}{1+|\sigma|\brac{x}^\gamma},
    \end{equation*}
    we find near infinity:
    \begin{equation}
    \label{gamma_operator}
        P_h-\sigma^2 \sim \rho_\tf^2\Bigl(\sum_{j=1}^n\bigl(\V^j\V^j + ih\gamma\frac{1}{\brac{x}^{1-\gamma}}\rho_\scf^2\frac{x^j}{\brac{x}}\V^j\bigr) + \mu\rho_\scf^2 - e^{2i\arg(\sigma)}\rho_\zf^2\Bigr).
    \end{equation}
    Applying complex scaling, the principal symbol of the complex scaled operator in this calculus can be written near infinity as
    \[\sigma_\h\bigl(\rho_\tf^{-2}(P_{\beta,h}-\sigma^2)\bigr)(x,\xi) \sim e^{-2i\beta}|\xi|^2 + e^{-2i\gamma\beta}\mu\rho_\scf^2 - e^{2i\arg(\sigma)}\rho_\zf^2.\]
    This is uniformly elliptic for $\arg(\sigma)$ in a compact subset of $(-\gamma\beta,\pi-\beta)$. We thus obtain a uniform resolvent estimate as in Thm.\ \ref{thm_threshold_estimates}, but with $\arg(\sigma)$ restricted to $[-\gamma\beta_0+\delta,\pi-\delta]$ for some $\delta>0$. Note that this only works for the long-range case $\gamma\leq 1$, since otherwise the second term in \eqref{gamma_operator} ceases to be subleading.
\end{rmk}

\section{Quasinormal modes for the Schwarzschild black hole}
\label{section_schwarzschild}

Consider the Schwarzschild metric on the region exterior to the black hole horizon, which can be written in spherical coordinates as
\[M = \R_t \times (2m,\infty)_r \times \Sph^2_\omega, \quad g = \mu(r)dt^2 - \mu(r)^{-1}dr^2 - r^2|d\omega|^2, \quad \mu(r) = 1-\frac{2m}{r},\]
where $\omega$ denotes coordinates on $\Sph^2$. Note that the metric extends smoothly across the black hole horizon at $2m$, which can be seen by a judicious choice of coordinates. However, with the exception of \S \ref{section_qnm_comparison}, we will work only in the exterior region.

The Laplace-Beltrami operator for this metric takes the form
\[\Box_g = \mu(r)^{-1}\p_t^2 - r^{-2}\p_rr^2\mu(r)\p_r - r^{-2}\Delta_\omega,\]
where $\Delta_\omega$ denotes the Laplacian on $\Sph^2$. Taking the Fourier transform in time gives the Schwarzschild spectral family
\begin{equation}
\label{spectral_family_t}
    P(\sigma) = e^{i\sigma t}\,\Box_g \,e^{-i\sigma t} = - r^{-2}\p_rr^2\mu(r)\p_r - r^{-2}\Delta_\omega - \mu(r)^{-1}\sigma^2
\end{equation}
Notice that this operator enjoys rotational symmetry. Thus, we can decompose into spherical harmonics $\{Y_{\ell,m}\}_{\ell,m}$, leading to an operator on $(2m,\infty)_r$. We further multiply through by $\mu(r)>0$ and conjugate by $r$ to bring this operator into a more convenient form:
\begin{equation}
\label{spectral_family_t_modification}
    P_\ell(\sigma) = r\mu(r)P(\sigma)\bigr|_{Y_{\ell,m}}r^{-1} = \mu(r)D_r\mu(r)D_r + r^{-2}\mu(r)\ell(\ell+1) + r^{-1}\mu(r)(\p_r\mu(r)) - \sigma^2.
\end{equation}
Making the change of variables
\begin{equation}
\label{x(r)}
    x:(2m,\infty)\to\R ,\quad x(r) = r + 2m\log(r-2m), \quad\text{with}\quad x'(r) = \mu(r)^{-1},
\end{equation}
we end up with the spectral family of a Schr\"odinger operator on $\R$:
\begin{equation}
\label{regge-wheeler}
    P_\ell(\sigma) = D_x^2 + V_\ell(x) - \sigma^2, \quad\text{where}\quad V_\ell(x) = \Bigl(1-\frac{2m}{r(x)}\Bigr)\Bigl(\frac{\ell(\ell+1)}{r(x)^2} + \frac{2m}{r(x)^3}\Bigr)
\end{equation}
is known as the Regge-Wheeler potential. Note that $x(r)$ in \eqref{x(r)} maps $(2m,\infty)$ onto $\R$ bijectively.

\subsection{Analytically continued Regge-Wheeler potential}
\label{section_regge-wheeler}

We will follow \cite{saBarreto_zworski,hitrik_zworski} and apply complex scaling to the operator $P_\ell(\sigma)$. This is possible since the potential $V_\ell(x)$ extends analytically to a domain in the complex plane; which in turn follows from the analytic continuation of the change of coordinate map $r(x)$. We thus collect some facts regarding the extension of $r(x)$ and $V_\ell(x)$ as holomorphic functions, which for the most part are contained in \cite{bachelot,hitrik_zworski}.

\begin{lemma}
\label{lemma_coordinates}
    Denote by $r:x\in\R\to r(x)\in(2m,\infty)$ the inverse of the change of coordinates in \eqref{x(r)}. Then, taking $A>0$ large enough and $\delta>0$ small enough, $r$ extends to a holomorphic function with $r(z)\neq 0$ on the domain
    \begin{equation}
    \label{domain}
        \Omega = \{z\in\C,\, |\Re(z)| > A\}\cup\{z\in\C,\, |\Im(z)| < \delta\}.
    \end{equation}
    Moreover, $r(z)$ maps $\{z\in\C,\, \Re(z) > A\}$ injectively into $\C\setminus\R_{\leq 2m}$ and satisfies \[z = r(z) + 2m\log(r(z)-2m), \quad\forall\,z\in\C,\, \Re(z) > A.\]
\end{lemma}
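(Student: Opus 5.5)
The plan is to construct the continuation of $r(z)$ separately in three regions and patch them together using uniqueness of analytic continuation. The basic tool is the holomorphic map $F(w) = w + 2m\log(w-2m)$, with the principal branch of the logarithm, on $\C\setminus(-\infty,2m]$. Its derivative is $F'(w) = \mu(w)^{-1} = w/(w-2m)$, which is nonzero away from $w=0$. Since $F$ restricts to the real map \eqref{x(r)} on $(2m,\infty)$, the inverse function theorem gives a local holomorphic inverse near each real point. Thus $r$ extends to a thin strip around any compact piece of $\R$, with $r$ close to real values in $(2m,\infty)$, and in particular $r\neq 0$ there.

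\emph{Region $\Re z > A$.} For large $|w|$ with $w\in\C\setminus(-\infty,2m]$, I would write $F(w) = w(1+o(1))$ and show that $F$ is injective on a region of the form $\{|w|>R\}\setminus(-\infty,2m]$. Injectivity would follow from $F(w_1)-F(w_2) = (w_1-w_2)\bigl(1+\int_0^1 \frac{2m}{w_t-2m}\,dt\bigr)$, where $w_t$ is the segment from $w_2$ to $w_1$. When the segment crosses the cut, I would instead use the argument principle, or Rouché's theorem for $F(w)-z$ on a large contour, to count exactly one preimage.

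Next I would show that for $\Re z > A$ the equation $F(w)=z$ has exactly one solution with $\Re w$ large. I would apply Rouché on a disc $|w-z|\le C\log|z|$, comparing $F(w)-z$ with $w-z$; the perturbation $2m\log(w-2m)$ is $O(\log|z|)$ there. The resulting solution depends holomorphically on $z$, agrees with $r(x)$ for real $x>A$, lies in $\C\setminus\R_{\le 2m}$, and satisfies the identity $z = r(z)+2m\log(r(z)-2m)$ by construction. Injectivity of $r$ on this half-plane is then immediate, since $F\circ r = \mathrm{id}$.

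\emph{Region $\Re z < -A$.} Here one has $r(x)-2m \approx e^{(x-2m)/(2m)}$. I would set $r(z) = 2m + e^{z/(2m)}g(z)$, where $g$ solves the fixed-point equation $g = e^{-1}\exp\bigl(-e^{z/(2m)}g/(2m)\bigr)$, coming from $\log(r-2m) = (z-r)/(2m)$. Because $|e^{z/(2m)}|$ is tiny when $\Re z<-A$, this is a contraction. It produces a holomorphic $g$ near $e^{-1}$ on the whole left half-plane $\Re z<-A$, with $r(z)$ near $2m$, so $r\neq 0$. This is the exponential/periodic structure that Zworski's method exploits.

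\emph{Strip $|\Im z|<\delta$, $|\Re z|\le A+1$.} Compactness of the interval together with the inverse function theorem gives a uniform $\delta$ for which the local inverses exist. All pieces agree on overlaps with the real function $r(x)$, so they glue to a single holomorphic function on $\Omega$.

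I expect the main obstacle to be the global injectivity and branch bookkeeping for $\Re z>A$ when $|\Im z|$ is large. There, $\log(w-2m)$ could a priori jump across the cut, so I need to check carefully that the Rouché disc around $z$ stays inside $\C\setminus(-\infty,2m]$. This holds because $\Re w\approx\Re z>A\gg 2m$, so the disc of radius $O(\log|z|)$ remains in the region $\Re w>2m$, provided $A$ is chosen large relative to $m$.
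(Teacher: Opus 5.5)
Your route is valid but differs from the paper's. The paper does not construct the inverse by hand. It imports from Bachelot the Lagrange-inversion series for $r(z)$ in powers of $e^{z/(2m)}$, convergent on $\Re z<-A$, and the double series $z-2m\log z+\sum c_{n,k}\log(z)^k z^{-n}$, convergent on $\Re z>A$. It then proves separately that $r(z)\notin\R_{\le 2m}$ for $\Re z>A$, using $|\Im r(z)|\ge|\Im z|-2m\pi-2m$ and $\Re r(z)\ge\Re z-2m\log|z|-2m$. Only then does it extend $z=r(z)+2m\log(r(z)-2m)$ off the real axis by the identity theorem, which gives injectivity. You build the inverse directly: Rouch\'e for $F(w)-z$ on $\Re z>A$, and a contraction for $g$ on $\Re z<-A$. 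The functional identity and the range statement then come for free, and you need nothing beyond Rouch\'e and the Banach fixed point theorem. Your representation $r=2m+e^{z/(2m)}g(z)$ is also exactly the structure used later in Lemma \ref{lemma_potential}. The paper's route, in exchange, gives explicit asymptotic expansions of $r(z)$ at both ends.

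There is, however, an incorrect step exactly where you flagged the difficulty. You claim the disc $|w-z|\le C\log|z|$ stays in $\Re w>2m$ because $\Re w\approx\Re z$; this is false. On the line $\Re z=A+1$ the radius $C\log|z|$ is unbounded as $|\Im z|\to\infty$, and no choice of $A$ fixes that. The true root even satisfies $\Re r(z)\approx\Re z-2m\log|z|\to-\infty$ there, so ``exactly one solution with $\Re w$ large'' is also misstated.

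What you actually need is only that the disc misses the cut $(-\infty,2m]$, and this follows from a case split. If $|\Im z|>C\log|z|$, the disc does not meet $\R$ at all. Otherwise $|z|\le\Re z+C\log|z|$ forces $|z|\le 2\Re z$, hence $\Re w\ge\Re z-C\log(2\Re z)>2m$ for $A$ large. In both cases $|w-2m|$ lies between $|z|/2$ and $2|z|$ on the disc. So $2m|\log(w-2m)|\le 2m\log|z|+O(1)<C\log|z|$ on the boundary once $C>2m$, and Rouch\'e applies.

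Any root in the disc then satisfies $|w-z|\le 2m\log|z|+O(1)$. This shows that discs for nearby $z$ pick out the same root, so $z\mapsto r(z)$ is holomorphic. It also identifies the root with the real $r(x)$ for $x>A$, since $0\le x-r(x)=2m\log(r(x)-2m)\le 2m\log x$. Your preliminary step on global injectivity of $F$ on $\{|w|>R\}\setminus(-\infty,2m]$ is unnecessary: injectivity of $r$ follows from $F\circ r=\mathrm{id}$, as you note.
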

\begin{proof}
Note that the map $x(r)$ in \eqref{x(r)} is real analytic and bijective, so its inverse $r(x)$ defines an analytic function on $\R$, and thus extends to a holomorphic function $r(z)$ in a complex neighborhood of $\R$. Since $r(x)>2m$ for $x\in\R$, this neighborhood can be chosen so that $r\neq 0$.

Using the Lagrange inversion formula, it can further be shown that $r(z)$ is given for $z\in\R$, $z\ll 0$ by the series
\begin{equation}
\label{small_x_expansion}
    r(z) = 2m + 2m\sum_{n=1}^\infty (-1)^{n+1}\frac{n^{n-1}}{n!}e^{n(\frac{z}{2m}-1)},
\end{equation}
see \cite[Prop.\ IV.2]{bachelot}. Choosing $A>0$ large enough, this converges uniformly for all $z\in\C$, $\Re(z)<-A$ and defines a holomorphic extension to this domain. We can moreover choose $A$ so that $|r(z)-2m|<2m$, i.e.\ $r(z)\neq 0$. 

Again by \cite[Prop.\ IV.2]{bachelot}, for $z\in\R$, $z\gg 0$, the map $r(z)$ can be represented by a series
\begin{equation}
\label{large_x_expansion}
    r(z) = z - 2m\log(z) + \sum_{n=1}^\infty\sum_{m=0}^\infty c_{n,m} \frac{\log(z)^m}{z^n}, \quad\text{for some}\quad c_{n,m}\in\R,
\end{equation}
which, choosing $A>0$ large enough, converges uniformly for all $z\in\C$, $\Re(z)>A$, see also \cite[\S 2.1]{hitrik_zworski}. Together with the previous observations, this gives the holomorphic extension to the domain in \eqref{domain}.

To see the last part of the lemma, note that for $\Re(z)>A$ we have $r(z)\in\C\setminus\R_{\leq 2m}$. Indeed, choosing $A$ so that the double sum in \eqref{large_x_expansion} is bounded in absolute value by $2m$ on $\Re(z)>A$, we have 
\[|\Im(r(z))| \geq |\Im(z)| - 2m\pi - 2m, \quad \Re(r(z)) \geq \Re(z) - 2m\log(|z|) - 2m, \quad\forall\,\Re(z)>A.\]
If $r(z)\in(-\infty,2m)$, the first equation implies $|z| \leq \Re(z) + 2m(\pi+1)$, and from the second we find $\Re(z) \leq 2m\log\bigl(\Re(z)+2m(\pi+1)\bigr) + 4m$, which is impossible for $\Re(z)$ large enough. Thus, $r(z)$ is contained in the domain of the holomorphic function $r\to r+2m\log(r-2m)$. Since $r(z)+2m\log(r(z)-2m)=z$ for $z\in\R$, this equation extends to all of $\Re(z)>A$, which also shows the injectivity of $r(z)$ on this domain.
\end{proof}

\begin{lemma}
\label{lemma_potential}
    The potential $V_\ell(x)$ in \eqref{regge-wheeler} extends to a holomorphic function
    \[V_\ell:\Omega\to\C\]
    on the domain given in \eqref{domain}. In $\Re(z)\geq 0$, it decays like $z^{-2}$. More precisely, for all $k\in\N_0$ we have
    \begin{equation}
    \label{behavior_large_x}
        \sup_{\substack{z\in\Omega \\ \Re(z)\geq 0}}\bigl|(z\partial_z)^kz^2V_\ell(z)\bigr| \leq C_k\brac{\ell}^2, \quad \inf_{\substack{z\in\Omega \\ \Re(z)\geq 0}}\bigl|z^2V_\ell(z)\bigr| \geq c\brac{\ell}^2, \quad \lim_{\Re(z)\to\infty}z^2V_\ell(z) = \ell(\ell+1).
    \end{equation}
    In $\Re(z)\leq 0$, the potential is exponentially decaying. More precisely, for all $k\in\N_0$ we have
    \begin{equation}
    \begin{gathered}
    \label{behavior_small_x}
        \sup_{\substack{z\in\Omega \\ \Re(z)\leq 0}}\bigl|\partial_z^k\bigl(e^{-\frac{z}{2m}}V_\ell(z)\bigr)\bigr| \leq C_k\brac{\ell}^2, \quad \inf_{\substack{z\in\Omega \\ \Re(z)\leq 0}}\bigl|e^{-\frac{z}{2m}}V_\ell(z)\bigr| \geq c\brac{\ell}^2, \\
        \text{and}\quad \lim_{\Re(z)\to -\infty}e^{-\frac{z}{2m}}V_\ell(z) = \frac{\ell(\ell+1)+1}{e(2m)^2}.
    \end{gathered}
    \end{equation}
\end{lemma}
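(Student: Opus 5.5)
The plan is to write $V_\ell$ as a function of $r=r(z)$, namely $V_\ell = \mu(r)\bigl(\ell(\ell+1)r^{-2} + 2mr^{-3}\bigr)$ with $\mu(r)=1-\frac{2m}{r}$. This is a rational function of $r$ whose only pole is at $r=0$. By Lemma \ref{lemma_coordinates}, $r(z)$ is holomorphic and nonvanishing on $\Omega$, so $V_\ell$ extends holomorphically to $\Omega$ at once. The remaining work is the quantitative asymptotics in the two half-planes. Both follow from the series expansions \eqref{large_x_expansion} and \eqref{small_x_expansion}, together with compactness on the bounded part $\{|\Re z|\le A,\,|\Im z|<\delta\}$. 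On that bounded part we shrink $\delta$ if needed. Then $r(z)$ is close to the real values $r(\Re z)\in(2m,\infty)$, so $V_\ell$ is close to the positive real value $V_\ell(\Re z)$, and it is bounded above and below by constants times $\brac{\ell}^2$.

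For $\Re z\ge 0$, I will split $\Omega\cap\{\Re z\ge0\}$ into the strip part $\{0\le\Re z\le A\}$, handled as above, and the half-plane $\{\Re z>A\}$. On the half-plane, \eqref{large_x_expansion} gives $r(z)=z\bigl(1+\bigO(|z|^{-1}\log|z|)\bigr)$, and differentiating the convergent series termwise shows that $(z\p_z)^k\bigl(r(z)/z\bigr)$ is bounded. Alternatively, one can use the relation $z=r+2m\log(r-2m)$, which gives $r'(z)=\mu(r)$, and induct on $k$. Then
\[z^2V_\ell(z)=\Bigl(\frac{z}{r}\Bigr)^2\mu(r)\Bigl(\ell(\ell+1)+\frac{2m}{r}\Bigr),\]
and the bounds $|(z\p_z)^k z^2V_\ell|\le C_k\brac{\ell}^2$ follow from the Leibniz rule and the fact that $z\p_z$ acting on functions of $r$ equals $\frac{z\mu(r)}{r}\,r\p_r$. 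The limit $z^2V_\ell\to\ell(\ell+1)$ as $\Re z\to\infty$ follows as well. The lower bound is the delicate point, because the convergence holds as $\Re z\to\infty$ but not uniformly as $|\Im z|\to\infty$ at fixed $\Re z$. However, $|z|\ge\Re z>A$ on the half-plane, so $r/z\to1$ and $\mu(r)\to1$ uniformly once $A$ is large. Hence $|z^2V_\ell|\ge\frac12\ell(\ell+1)$ there, after possibly increasing $A$; Lemma \ref{lemma_coordinates} allows this.

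For $\Re z\le0$, I will again treat the compact strip part directly. On $\Re z<-A$, I use \eqref{small_x_expansion}: $r(z)-2m=2m\,e^{\frac{z}{2m}-1}\bigl(1+g(z)\bigr)$, where $g$ is a power series in $e^{\frac{z}{2m}}$ with $g\to0$ and all derivatives $\p_z^kg$ bounded and uniformly small for $\Re z<-A$. Then $\mu(r)=\frac{r-2m}{r}$, so
\[e^{-\frac{z}{2m}}V_\ell(z)=\frac{2m\,e^{-1}(1+g)}{r}\Bigl(\frac{\ell(\ell+1)}{r^2}+\frac{2m}{r^3}\Bigr).\]
Since $r\to2m$ with bounded derivatives, we obtain the $C^k$ bounds and the limit $\frac{\ell(\ell+1)+1}{e(2m)^2}$. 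The lower bound follows because $|r-2m|$ and $|g|$ are small. Combining these with the strip region, where the lower bound uses the positivity of $V_\ell$ on $\R$ and the closeness for small $\delta$, completes the argument.

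The main obstacle is obtaining uniformity. The constants must be uniform in $\ell$, which is automatic because $\ell$ enters only through the factor $\ell(\ell+1)+\text{const}$ with positive coefficients; the lower bound uses $\ell(\ell+1)+\frac{2m}{r}\approx\ell(\ell+1)+1\gtrsim\brac{\ell}^2$. They must also be uniform in unbounded $\Im z$ on the half-planes, which is handled as described above.
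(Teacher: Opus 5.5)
Your argument follows the paper's proof essentially step for step. It uses holomorphy from $V_\ell=W_\ell\circ r$ with $r\neq0$ on $\Omega$, and compactness on the strip $\{|\Re z|\le A,\,|\Im z|<\delta\}$. On $\Re z>A$ it uses the identity $z\p_z=\frac{z\mu(r)}{r}\,r\p_r$ (the paper's $f(r)\,r\p_r$) together with $z^2V_\ell=(z/r)^2\mu(r)\bigl(\ell(\ell+1)+\frac{2m}{r}\bigr)$. On $\Re z<-A$ it factors $r-2m$ as $e^{\frac{z}{2m}}$ times a bounded, nonvanishing holomorphic function.

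One step does fail, and the paper's proof makes the same oversight: the lower bound $\inf|z^2V_\ell|\ge c\brac{\ell}^2$ in \eqref{behavior_large_x} is false as literally stated. There are two reasons.

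First, $0\in\Omega$ and $z^2V_\ell(z)$ vanishes at $z=0$. On $\{0\le\Re z\le A\}$, your ``handled as above'' only bounds $|V_\ell|$ from below, not $|z^2V_\ell|$.

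Second, for $\ell=0$ one has $z^2V_0=(z/r)^2\mu(r)\frac{2m}{r}\to0$ as $\Re z\to\infty$. Your justification $\ell(\ell+1)+\frac{2m}{r}\approx\ell(\ell+1)+1$ holds near the horizon, where $r\approx 2m$. It does not hold as $r\to\infty$. There your own bound $|z^2V_\ell|\ge\frac12\ell(\ell+1)$ is the relevant one, and it gives nothing at $\ell=0$.

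What your argument actually proves is the lower bound for $\ell\ge1$, with $z^2$ replaced by $1+|z|^2$ (or with $z^2$ kept but $z$ restricted to $\Re z\ge1$). This suffices for the rest of the paper. Lemma \ref{lemma_infty_estimates} only uses the upper bounds and the limit in \eqref{behavior_large_x}. Lemma \ref{lemma_horizon_estimates} uses the lower bound in \eqref{behavior_small_x}, and your argument does establish that one, including for $\ell=0$.

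Incidentally, there is no non-uniformity in $\Im z$ on $\Re z>A$. Since $r/z=1+\bigO(|z|^{-1}\log|z|)$, the convergence $z^2V_\ell\to\ell(\ell+1)$ holds uniformly as $|z|\to\infty$ there.
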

\begin{proof}
    For $z\in\R$, we have
    \begin{equation}
    \label{regge-wheeler_2}
        V_\ell(z) = W_\ell(r(z)), \quad\text{where}\quad W_\ell(r) = \Bigl(1-\frac{2m}{r}\Bigr)\Bigl(\frac{\ell(\ell+1)}{r^2}+\frac{2m}{r^3}\Bigr).
    \end{equation}
    Notice that $W_\ell$ defines a holomorphic function on $\C\setminus\{0\}$. Since $r(z)$ extends to a holomorphic function $r:\Omega\to\C\setminus\{0\}$ by Lemma \ref{lemma_coordinates}, the potential $V_\ell(z)=W_\ell(r(z))$ extends analytically to the same domain.

    For $z$ in a bounded subset of $\Omega$, the estimates on the derivatives of $z^2V_\ell(z)$, respectively $e^{\frac{z}{2m}}V_\ell(z)$, follow from holomorphicity and the form of the potential \eqref{regge-wheeler_2}. Moreover, for any $A>0$ we have $x\in(-A,A) \to r(x)\in(2m+\epsilon,C)$ for some $\epsilon,C>0$. Choosing $\delta>0$ small enough, so that $r(z)$ is contained in a small complex neighborhood of $(2m+\epsilon,C)$ for all $z$ in $\{z\in\C,\,|\Re(z)|<A,\, \Im(z)<\delta\}$, the lower bound on $z^2V_\ell(z)$, respectively $e^{\frac{z}{2m}}V_\ell(z)$, follows from \eqref{regge-wheeler_2}.

    It remains to consider $\Re(z) > A$ and $\Re(z) < -A$.
    By Lemma \ref{lemma_coordinates}, $z\in\{\Re(z)>A\}\to r(z)\in \C\setminus\{0\}$ is a biholomorphism onto its image with inverse $z(r) = r+2m\log(r-2m)$. Thus, for $\Re(z)>A$ we have
    \begin{equation}
    \label{potential_large_x}
        z^2V_\ell(z) = \bigl(r+2m\log(r-2m)\bigr)^2W_\ell(r) = \Bigl(1+2m\frac{\log(r-2m)}{r}\Bigr)^2\Bigl(1-\frac{2m}{r}\Bigr)\Bigl(\ell(\ell+1) + \frac{2m}{r}\Bigr)
    \end{equation}
    Since $\Re(z) = \Re(r) + 2m\log(|r-2m|) \leq (2m+1)|r| + (2m)^2$, we find that $\Re(z)\to\infty$ implies $|r|\to\infty$. Now \eqref{potential_large_x} shows immediately that $z^2V_\ell(z)\to\ell(\ell+1)$ as $\Re(z)\to\infty$. Moreover, it is evident from \eqref{potential_large_x} that
    \[\bigl|(r\p_r)^k (r+2m\log(r-2m))^2W_\ell(r)\bigr| \leq C_k\brac{\ell}^2\]
    for all $k\in\N_0$ and all $|r|>B$ with $B>0$ sufficiently large. In $\Re(z)>A$ we can use the inverse function theorem to write
    \[z\p_z = f(r)r\p_r, \quad\text{where}\quad f(r) = \Bigl(1+2m\frac{\log(r-2m)}{r}\Bigr)\Bigl(1-\frac{2m}{r}\Bigr)\]
    satisfies $|(r\p_r)^kf(r)| \leq C_k$ for $k\in\N_0$ and $|r|$ large. Choosing $A$ large, we have $|r|>B$ when $\Re(z)>A$ and we find
    \[\sup_{\Re(z)>A}\bigl|(z\partial_z)^kz^2V_\ell(z)\bigr| \leq \sup_{|r|>B}\bigl|(f(r)r\p_r)^k(r+2m\log(r-2m))^2W_\ell(r)\bigr| \leq C_k\brac{\ell}^2\]
    for all $k\in\N_0$ and some $C_k > 0$.

    In $\Re(z)<-A$, the series expansion of $r(z)$ in \eqref{small_x_expansion} shows that
    \[r(z)-2m = e^{\frac{z}{2m}}h(z), \quad\text{where}\quad h(z) = \frac{2m}{e} + \sum_{n=1}^\infty (-1)^n\frac{(n+1)^n}{(n+1)!e^{n+1}}e^{n\frac{z}{2m}}.\]
    Note that both $r(z)$ and $h(z)$ have bounded derivatives of any order in $\Re(z)<-A$ and
    \[r(z) \to 2m, \quad h(z) \to \frac{2m}{e}, \quad\text{as }\, \Re(z)\to -\infty.\]
    Using \eqref{regge-wheeler_2}, we see that
    \[e^{-\frac{z}{2m}}V_\ell(z) = h(z)\Bigl(\frac{\ell(\ell+1)}{r(z)^3} + \frac{2m}{r(z)^4}\Bigr)\]
    in $\Re(z)<-A$, so \eqref{behavior_small_x} follows from the above observations.
\end{proof}

Following \cite{saBarreto_zworski,hitrik_zworski}, we can now apply complex scaling to the Regge-Wheeler potential and obtain the Schwarzschild quasinormal modes at angular momentum $\ell$ in terms of the spectrum of the non-self-adjoint operator $(D_z^2 + V_\ell(z))|_{\Gamma_\beta}$. Here, $\Gamma_\beta$ is a contour contained in the domain $\Omega$, see \eqref{domain}, which coincides with $e^{i\beta}\R$ outside some compact set. Note that, taking $\Gamma_\beta$ as in \eqref{F_beta} with $R_1 > A$, we have $\Gamma_\beta \subset\Omega$ for all $\beta\in(-\frac{\pi}{2},\frac{\pi}{2})$. Just as in the proof of Prop.\ \ref{prop_resolvent_continuation} one can show that the complex scaled spectral family $P_{\beta,\ell}(\sigma) = P_\ell(\sigma)|_{\Gamma_\beta}$ has a meromorphic inverse in $\Lambda_\beta = \{\sigma\in\C\setminus\{0\},\, \arg(\sigma)\in(-\beta,\pi-\beta)\}$ and these can be patched up to provide the meromorphic continuation of the cutoff resolvent $\chi P_\ell(\sigma)^{-1}\chi$ to $\sigma\in\C\setminus i\R_{\leq 0}$. The quasinormal modes at angular momentum $\ell$ are the poles of $P_{\beta,\ell}(\sigma)^{-1}$ in $\Lambda_\beta$. In the following, we will prove the absence of such QNM in a disc around zero energy with a radius growing linearly in $\ell$. In \S \ref{section_infty_estimates}, we study $P_{\beta,\ell}(\sigma)$ in $x>0$ and prove elliptic estimates in the semiclassical scattering-b transition calculus. In \S \ref{section_horizon_estimates}, we turn to $x<0$ and use the method of \cite{zworski_large_parameter_scaling} to obtain estimates for $P_{\beta,\ell}(\sigma)$ in this region. These estimates can then be glued together to show the invertibility of $P_{\beta,\ell}(\sigma)$ for $|\sigma|\leq c_0\ell$ and $\ell$ large.

\begin{rmk}
\label{rmk_simpler_contours}
    Note from Lemma \ref{lemma_potential}, that $V_\ell$ is in fact holomorphic in a neighborhood of $\cup_{\beta\in[0,\beta_0]}e^{i\beta}\R$ for small enough $\beta_0$. Thus, one could also apply complex scaling to the Regge-Wheeler operator using the simpler contours $\Gamma_\beta = e^{i\beta}\R$. However, these contours are not suitable for the exclusion of low energy modes, see \S\ref{section_horizon_estimates}.
\end{rmk}

\subsection{Estimates near asymptotically flat infinity}
\label{section_infty_estimates}
In this section, we use the semiclassical scattering-b transition calculus of \S\ref{section_hscb_calculus} to prove elliptic estimates for the complex scaled Regge-Wheeler operator $P_{\beta,\ell}(\sigma)$ in the region $\Gamma_\beta\cap\{\Re(z)>-1\} = F_\beta((-1,\infty)$. Here, $\Gamma_\beta = F_\beta(\R)$ is a contour as in \eqref{F_beta} with $\beta\in(0,\frac{\pi}{2})$, i.e.\
\begin{equation}
\label{contour_infty}
    F_\beta:\R\to\C, \quad F_\beta(x) = e^{i\phi_\beta(x)}x, \quad \phi_\beta(x) = \begin{cases} 0, \quad x\in [-1,R_1], \\ \beta, \quad x\in [R_2,\infty),\end{cases}\,\, x\phi_\beta'(x) \in [0,\epsilon] \,\,\,\forall\, x.
\end{equation}
Of course, we take $R_1 > A$ so that $\Gamma_\beta$ is contained in the domain of holomorphicity of $V_\ell$, see \eqref{domain}. Since we only consider $x\in (-1,\infty)$, the precise nature of $F_\beta(x)$ for $x\in (-\infty,-1)$ is immaterial.

We introduce the semiclassical parameter $h^2=\ell(\ell+1)$. Continuing to denote by $P_\ell(\sigma)$ the analytic continuation of the operator in \eqref{regge-wheeler} to $\Omega\subset\C$, we consider the semiclassical rescaling
\begin{equation}
\label{semiclassical_rescaling}
\begin{split}
    P(h,\sigma) &= h^2P_\ell\Bigl(\frac{\sigma}{h}\Bigr) = (hD_z)^2 + V(h,z) - \sigma^2, \quad h = \bigl(\ell(\ell+1)\bigr)^{-\frac{1}{2}}, \\ 
    V(h,z) &= h^2V_\ell(z) = \Bigl(1-\frac{2m}{r(z)}\Bigr)\Bigl(\frac{1}{r(z)^2} + h^2\frac{2m}{r(z)^3}\Bigr)
\end{split}
\end{equation}
By Lemma \ref{lemma_potential}, in $\Re(z)>-1$ the operator $P(h,\sigma)$ is precisely of the form studied in \S \ref{section_thresholds}. Thus, we can obtain semiclassical elliptic estimates -- locally in $F_\beta((-1,\infty))$ -- for the complex scaled operator $P_\beta(h,\sigma)$ as in the proof of Prop.\ \ref{prop_threshold_estimates}. Undoing the semiclassical rescaling in \eqref{semiclassical_rescaling}, these take the following form for $P_{\beta,\ell}(\sigma)$, where we identify $H^s(\Gamma_\beta)$ with $H^s(\R)$ via the pullback by the map $F_\beta$, i.e.\ the Sobolev norms are with respect to local coordinates in the chart $F_\beta^{-1}$.

\begin{lemma}
\label{lemma_infty_estimates}
    Let $\chi_+,\tilde{\chi}_+\in C^\infty(\R)$ satisfy 
    \[\supp(\chi_+),\,\supp(\tilde{\chi}_+) \subset (-1,\infty),\quad \chi_+=1 \,\text{ on }\, (-\tfrac{1}{2},\infty),\quad \tilde{\chi}_+ = 1 \,\text{ on }\, \supp(\chi_+).\] 
    There exists $c_0>0$ such that for all $s,N\in\R$ the following estimate holds:
    \begin{equation}
    \label{estimate_infty_side}
        \|\chi_+ u\|_{H^s_{\scb,\brac{\ell}^{-1},\sigma}(\R)} \leq C_{s,N}\bigl(\brac{\ell}^{-2}\|\tilde{\chi}_+P_{\beta,\ell}(\sigma)u\|_{\rho_\tf^2H^{s-2}_{\scb,\brac{\ell}^{-1},\sigma}(\R)} + \brac{\ell}^{-N}\|\tilde{\chi}_+u\|_{H^{-N}_{\scb,\brac{\ell}^{-1},\sigma}(\R)}\bigr),
    \end{equation}
    uniformly in $\ell\in\N_0$ and $\sigma\in\{\sigma\in\C\setminus\{0\},\, |\sigma|\leq c_0\sqrt{\ell(\ell+1)},\, \arg(\sigma)\in[-\beta+\delta,\pi-\beta-\delta]\}$.
\end{lemma}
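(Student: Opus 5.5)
The plan is to reduce \eqref{estimate_infty_side} to the local estimates established in the proof of Prop.\ \ref{prop_threshold_estimates}, applied to the semiclassically rescaled operator $P_\beta(h,\sigma) = h^2P_{\beta,\ell}(\sigma/h)$ with $h = (\ell(\ell+1))^{-1/2}$.

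\textbf{Step 1: check the hypotheses on the relevant region.} By Lemma \ref{lemma_potential} (the bounds \eqref{behavior_large_x} multiplied by $h^2$), on $\Re(z)>-1$ the potential $V(h,z)=h^2V_\ell(z)$ satisfies the conormality \ref{assumption_conormal}. It also has the asymptotics \ref{assumption_asymptotics}, with $\mu=1$ (since $h^2\ell(\ell+1)=1$), uniformly in $h$. The lower bound $|z^2V(h,z)|\ge c$ gives the ellipticity input on the bounded region $[-1,R_2+2]$. The coefficients are $a=1$ and $b=0$. The contour \eqref{contour_infty} is of the form \eqref{F_beta} with $\phi_\beta=0$ on $[-1,R_1]$. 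Note that $\V_\hscb$ with $h\sim\brac{\ell}^{-1}$ is the correct parameter; the relations $h\sim\brac{\ell}^{-1}$ and $(\ell(\ell+1))^{-1/2}$ differ by bounded factors, so the corresponding sc-b norms are equivalent. Small $\ell$, in particular $\ell=0$, only needs the constant adjusted, or $h\le 1$ after replacing $h$ by $\brac{\ell}^{-1}$ throughout.

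\textbf{Step 2: sc-b elliptic estimate near infinity.} Repeat the symbol computation of Prop.\ \ref{prop_threshold_estimates} in $x>R_2$:
\[\sigma_\hscb\bigl(\rho_\tf^{-2}(P_\beta(h,\sigma)-\sigma^2)\bigr)=e^{-2i\beta}(|\xi|^2+\rho_\scf^2)-e^{2i\arg\sigma}\rho_\zf^2+\lilO(|\xi|^2+\rho_\scf^2).\]
This is elliptic for $\arg\sigma\in[-\beta+\delta,\pi-\beta-\delta]$, and it remains so for every $|\sigma|\le c_0$. The reason is that $\rho_\scf+\rho_\zf=1$ interpolates between the two terms, so no smallness of $\sigma$ is needed here. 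Proposition \ref{prop_hscb_elliptic_estimate} then gives \eqref{hscb_estimate_near_infty} with cutoffs supported in $x>R_2$.

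\textbf{Step 3: bounded region.} On $x\in[-1,R_2+2]$, the ordinary semiclassical symbol satisfies
\[|p_{h,\beta}-\sigma^2|\ge \tfrac12(|\xi|^2+|V(0,F_\beta(x))|)-|\sigma|^2,\]
using $\epsilon$ small and $R_1$ large as before. Since $V(0,\cdot)$ is bounded below by a positive constant on this compact set of the contour (Lemma \ref{lemma_potential}), choosing $c_0$ small makes the operator elliptic there. Semiclassical elliptic estimates \cite[Thm.\ E.33]{dyatlov_zworski_scattering_book}, used with cutoffs that are $1$ on $[-1/2,R_2+1]$ and supported in $(-1,R_2+2)$, give an estimate of the form \eqref{semiclassical_estimate_bounded_x}. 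This step is the main point requiring care. Unlike in Prop.\ \ref{prop_threshold_estimates}, we must not glue to a region near $x=-1$ where the estimate fails. The error and right-hand side therefore involve only $\tilde\chi_+$, which is allowed because $\tilde\chi_+=1$ on $\supp\chi_+$. The commutator terms are absorbed in the usual way by nested cutoffs.

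\textbf{Step 4: combine and rescale.} Combine the two estimates via Remark \ref{rmk_norm_equiv}, noting that $\rho_\tf\sim1$ on bounded sets. Then undo the rescaling: $P_\beta(h,\sigma)=h^2P_{\beta,\ell}(\sigma/h)$, which produces the factor $\brac{\ell}^{-2}$. Replacing $\sigma$ by $h\sigma$ turns $|\sigma|\le c_0$ into $|\sigma|\le c_0\sqrt{\ell(\ell+1)}$. The $\sigma$ appearing in the Sobolev norms is the rescaled one, matching the statement's convention. The error $h^N$ becomes $\brac{\ell}^{-N}$.
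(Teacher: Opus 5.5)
Your proposal is correct and follows the paper's proof essentially step for step. You rescale with $h=(\ell(\ell+1))^{-1/2}$, get sc-b ellipticity of $\rho_\tf^{-2}P_\beta(h,\sigma)$ in $x\geq R_2$ uniformly in the sector via $\rho_\scf+\rho_\zf=1$, get ordinary semiclassical ellipticity on $[-1,R_2]$ for $|\sigma|\leq c_0$ from positivity of $V(0,\cdot)$ and the transition-region bound, glue via Remark \ref{rmk_norm_equiv}, and undo the rescaling. One small slip to fix: the inner cutoff for the bounded-region estimate must equal $1$ on all of $\supp(\chi_+)\cap(-\infty,R_2+1]$, which can reach into $(-1,-\tfrac12)$, not merely on $[-\tfrac12,R_2+1]$.
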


\begin{rmk}
\label{rmk_exotic_norms}
    See \eqref{sobolev_norm}, with $h$ replaced by $\brac{\ell}^{-1}$, for the somewhat exotic norms appearing in \eqref{estimate_infty_side} (recall also that $\rho_\tf = \frac{1+|\sigma|\brac{x}}{\brac{x}}$). This estimate in particular implies that
    \[\|\chi_+ u\|_{L^2(\R)} \leq C\bigl(\brac{\ell}^{-2}\|\rho_\tf^{-2}\tilde{\chi}_+P_{\beta,\ell}(\sigma)u\|_{L^2(\R)} + \brac{\ell}^{-N}\|\tilde{\chi}_+u\|_{L^2(\R)}\bigr),\]
    which, using $\rho_\tf^{-1}\leq \max(\brac{x},|\sigma|^{-1})$, in turn implies both of the following uniform estimates
    \begin{equation*}
    \begin{split}
        \|\chi_+ u\|_{L^2(\R)} &\leq C\bigl(\brac{\ell}^{-2}\|\brac{x}^2\tilde{\chi}_+P_{\beta,\ell}(\sigma)u\|_{L^2(\R)} + \brac{\ell}^{-N}\|\tilde{\chi}_+u\|_{L^2(\R)}\bigr), \\
        \|\chi_+ u\|_{L^2(\R)} &\leq C\bigl(|\sigma|^{-2}\brac{\ell}^{-2}\|\tilde{\chi}_+P_{\beta,\ell}(\sigma)u\|_{L^2(\R)} + \brac{\ell}^{-N}\|\tilde{\chi}_+u\|_{L^2(\R)}\bigr).
    \end{split}
    \end{equation*}
\end{rmk}
\begin{proof}
    Let $P_\beta(h,\sigma) = P(h,\sigma)|_{\Gamma_\beta} = h^2P_{\beta,\ell}(\frac{\sigma}{h})$ with $h = (\ell(\ell+1))^{-\frac{1}{2}}$, see \eqref{semiclassical_rescaling}.
    Identifying $\Gamma_\beta$ with $\R$ via the map $x\in\R \to e^{i\phi_\beta(x)}x \in \Gamma_\beta$, we have
    \[P_\beta(h,\sigma) = \Bigl(\frac{e^{-i\phi_\beta(x)}}{1+ix\phi_\beta'(x)}hD_x\Bigr)^2 + V(h,e^{i\phi_\beta(x)}x) - \sigma^2.\]
    By Lemma \ref{lemma_potential}, the potential satisfies
    \begin{equation*}
        (x\p_x)^k(x^2V(h,e^{i\phi_\beta(x)}x)) \leq C_k,\quad \forall\,x>0,\,k\in\N_0, \quad\text{and}\quad \lim_{x\to\infty}x^2V(h,e^{i\phi_\beta(x)}x) = e^{-2i\beta}.
    \end{equation*}
    Thus, cutting off to $x\in (-1,\infty)$ we see that $\tilde{\chi}_+ P_\beta(h,\sigma) \in \rho_\tf^2\Diff_\hscb^2(\R)$. We proceed as in the proof of Prop.\ \ref{prop_threshold_estimates}. In $x\geq R_2$, we have $\phi_\beta(x)=\beta$ and
    \begin{equation*}
        \rho_\tf^{-2}P_\beta(h,\sigma) = e^{-2i\beta}\bigl(V_\hscb^2 + ih\rho_\scf^2\frac{x}{\brac{x}}V_\hscb\bigr) + \rho_\scf^2\brac{x}^2V(h,e^{i\phi_\beta(x)}x) - e^{2i\arg(\sigma)}\rho_\zf^2,
    \end{equation*}
    where
    \[V_\hscb = \frac{\brac{x}}{1+|\sigma|\brac{x}}hD_x, \quad \rho_\tf = \frac{1+|\sigma|\brac{x}}{\brac{x}}, \quad \rho_\scf = \frac{1}{1+|\sigma|\brac{x}}, \quad \rho_\zf = \frac{|\sigma|\brac{x}}{1+|\sigma|\brac{x}}.\]
    The principal symbol in the semiclassical sc-b calculus satisfies
    \[\sigma_\hscb(\rho_\tf^{-2}P_\beta(h,\sigma))(x,\xi) = e^{-2i\beta}(\xi^2 + \rho_\scf^2) - e^{2i\arg(\sigma)}\rho_\zf^2 + \lilO_{x\to\infty}(1).\]
    Choosing $R_2$ large enough and using $\rho_\scf+\rho_\zf=1$, we see that $P_\beta(h,\sigma)$ is semiclassically sc-b elliptic in $x\geq R_2$, uniformly in $\arg(\sigma) \in [-\beta+\delta,\pi-\beta-\delta]$ (for any $\delta>0$).

    On the other hand, in $[-1,R_2]$ and for $|\sigma|$ small enough, the operator is semiclassically elliptic in the usual sense. Indeed, near $[R_1,R_2]$ the semiclassical principal symbol is given by
    \begin{equation*}
    \begin{split}
        \sigma_\h(P_\beta(h,\sigma))(x,\xi) &= \frac{e^{-2i\phi_\beta(x)}}{(1+ix\phi_\beta'(x))^2}\xi^2 + V(0,e^{i\phi_\beta(x)}x) - \sigma^2 \\
        &= e^{-2i\phi_\beta(x)}(\xi^2+x^{-2}) - \sigma^2 + \bigO(\epsilon\xi^2) + \lilO_{x\to\infty}(x^{-2}),
    \end{split}
    \end{equation*}
    where we used $|x\phi_\beta'(x)|=\bigO(\epsilon)$ and $V(0,e^{i\phi_\beta(x)}x) = e^{-2i\phi_\beta(x)}x^{-2} + \lilO_{x\to\infty}(x^{-2})$. Thus, choosing $R_1$ large enough and $\epsilon$ small enough, we can find $c_0>0$ such that 
    \[|\sigma_\h(P_\beta(h,\sigma))(x,\xi)|\geq C\brac{\xi}^2\]
    for all $x$ in a neighborhood of $[R_1,R_2]$ and $|\sigma|\leq c_0$. Finally, for $x\in[-1,R_1]$ we have $\phi_\beta(x)=0$ and
    \[\sigma_\h(P_\beta(h,\sigma))(x,\xi) = \xi^2 + V(0,x) - \sigma^2.\]
    Since $V(0,x)$ is strictly positive, we can again choose $c_0>0$ so that $\sigma_\h(P_\beta(h,\sigma)) \geq C\brac{\xi}^2$ in $x\in[-1,R_1]$, uniformly in $|\sigma|\leq c_0$.

    Combining the standard semiclassical elliptic estimates in $[-1,R_2]$ with semiclassical sc-b elliptic estimates in $[R_2,\infty)$, see Prop.\ref{prop_hscb_elliptic_estimate}, and recalling that the usual semiclassical Sobolev norms agree with the semiclassical sc-b norms on bounded subsets, we obtain the estimate
    \[\|\chi_+ u\|_{H^s_\scbnorm(\R)} \leq C\bigl(\|\tilde{\chi}_+P_\beta(h,\sigma)u\|_{\rho_\tf^2H^{s-2}_\scbnorm(\R)} + h^N\|\tilde{\chi}_+u\|_{H^{-N}_\scbnorm(\R)}\bigr),\]
    uniformly in $h\in (0,1]$ and $\sigma\in\{\sigma\in\C\setminus\{0\},\, |\sigma|\leq c_0,\, \arg(\sigma)\in[-\beta+\delta,\pi-\beta-\delta]\}$. The lemma follows by recalling that $h=(\ell(\ell+1))^{-\frac{1}{2}}\sim\brac{\ell}^{-1}$ and $P_\beta(h,\sigma) = h^2P_{\beta,\ell}(\frac{\sigma}{h})$.
\end{proof}

\begin{rmk}
    If we switch to the framework of \cite{stucker_QNM_Kerr}, i.e. work in coordinates that extend beyond the horizon, the behavior near asymptotically flat infinity remains largely unchanged. That is, the Schwarzschild spectral family at angular momentum $\ell$ still defines a semiclassical scattering-b transition operator, which is elliptic near infinity. This observation can be combined with propagation and radial estimates to prove the exclusion of resonances with $|\sigma|\leq c_0\ell$ in strips $\Im(\sigma) > -C$. Due to the use of radial point estimates, one is restricted to working in strips below the real axis. Indeed, for a given $\Im(\sigma)$, radial estimates only hold on Sobolev spaces above some threshold regularity. This makes it impossible to obtain estimates on a fixed Sobolev space as $|\sigma|\to\infty$ in a full sector below the real axis. To deal with the horizon, we thus apply complex scaling and use the method of \cite{zworski_large_parameter_scaling}. It may be possible to avoid complex scaling at the horizon by adapting the method of \cite{jezequel_res_counting} to our situation.
\end{rmk}

\subsection{Estimates near the black hole horizon}
\label{section_horizon_estimates}
In $\Re(z)\leq 0$, we will use a different approach to obtain elliptic estimates for the complex scaled operator. Unlike the estimates in Lemma \ref{lemma_infty_estimates}, these do not hold in a full disc $|\sigma| \leq c_0\sqrt{\ell(\ell+1)}$, but rather in an annulus of the form $K_0 \leq |\sigma| \leq c_0\sqrt{\ell(\ell+1)}$. Moreover, we must take the scaling angle $\beta$ sufficiently small and restrict $\sigma$ to a small sector $\arg(\sigma)\geq-\theta_0$ below the real axis.

Recall that the Regge-Wheeler potential decays exponentially as $\Re(z)\to-\infty$, i.e.\ as one approaches the black hole horizon. We follow the method of \cite{zworski_large_parameter_scaling}, which is designed for such potentials and was already used in \cite{saBarreto_zworski} to prove the absence of resonances in an annulus as above for the Schwarzschild-de Sitter black hole, see also \cite{bony_haefner_res_exp}. This requires using a different semiclassical rescaling of the operator and a complex scaling contour that depends on $|\sigma|,\ell$.

We consider the analytically continued Regge-Wheeler operator \eqref{regge-wheeler} in $\Re(z)\leq 0$. We take
\[K_0 \leq |\sigma| \leq c_0\sqrt{\ell(\ell+1)}, \quad h=|\sigma|^{-1}, \quad \lambda^2 = h^2\ell(\ell+1),\]
where the constants $K_0, c_0 > 0$ will be determined later. Notice that $h\in (0,K_0^{-1}]$ is indeed a small parameter, whereas $\lambda \in [c_0,\infty)$ is a large parameter. We define the semiclassical rescaling
\begin{equation}
\label{semiclassical_rescaling_horizon}
    P(h,\lambda,\omega) = h^2P_\ell\Bigl(\frac{\omega}{h}\Bigr) = (hD_z)^2 + h^2V_\ell(z) - \omega^2, \quad\text{where}\quad |\omega|=1.
\end{equation}
From Lemma \ref{lemma_potential}, it follows that
\[h^2V_\ell(z) \sim Ce^{\tfrac{z}{2m}}(\lambda^2 + \bigO(h^2)), \quad\text{as } \Re(z)\to -\infty.\]
Thus, in $\Re(z)<0$ we are dealing with an exponentially decaying potential, which depends symbolically on an additional large parameter $\lambda$. We can therefore use the results of \cite[\S 4]{zworski_large_parameter_scaling} to obtain semiclassical elliptic estimates for a complex scaled version of $P(h,\lambda,\omega)$. This relies on a complex scaling contour that depends on $\lambda$, and thus on $|\sigma|,\ell$. Indeed, away from complex scaling, i.e.\ for $z$ real and bounded away from infinity, $P(h,\lambda,\omega)$ is semiclassically elliptic for $\lambda$ large enough by the positivity of the potential. However, as $|z|\to\infty$ along the complex scaling contour, we have $z\sim e^{i\beta}x$, where $x\to-\infty$, so the exponential factor in the potential is wildly oscillating. Since the potential also grows as $\lambda^2$, we have no chance of obtaining ellipticity uniformly in $\lambda$ on a fixed contour. Instead, we must ensure that the potential is sufficiently small, uniformly in $\lambda$, once we enter the complex scaling region. This is achieved by only starting the complex deformation when $|x|$ is on the order of $2m\log(\lambda^2)$. More precisely, for $\lambda^2=|\sigma|^{-2}\ell(\ell+1)$ large enough, we set $\Gamma_{\beta,|\sigma|,\ell} = G_{\beta,|\sigma|,\ell}((-\infty,0])$, where $G_{\beta,|\sigma|,\ell}:(-\infty,0] \to \C$ satisfies
\begin{equation}
\label{contour_horizon}
    G_{\beta,|\sigma|,\ell}(x) = \begin{cases} x, \quad &x \geq -2m\log(\lambda^2) + C_1, \\ x + i\tan(\beta)\bigl(x+2m\log(\lambda^2)\bigr), \quad &x \leq -2m\log(\lambda^2) - C_2.\end{cases}
\end{equation}
Note that $\arg(G_{\beta,|\sigma|,\ell}(x)) \to \pi+\beta$ as $x\to-\infty$. The semiclassical estimates for the operator in \eqref{semiclassical_rescaling} restricted to the scaling contour, translate to the following uniform estimates for the complex scaled Regge-Wheeler operator $P_{\beta,\ell}(\sigma) = P_\ell(\sigma)|_{\Gamma_{\beta,|\sigma|,\ell}}$, where we once again identify $H^s(\Gamma_{\beta,|\sigma|,\ell})$ with $H^s(\R)$ via the coordinate chart $G_{\beta,|\sigma|,\ell}^{-1}$.
\begin{lemma}
\label{lemma_horizon_estimates}
    Let $\chi_-,\tilde{\chi}_-\in C^\infty(\R)$ satisfy 
    \[\supp(\chi_-),\,\supp(\tilde{\chi}_-) \subset (-\infty,0),\quad \chi_-=1 \,\text{ on }\, (-\infty,-\tfrac{1}{2}),\quad \tilde{\chi}_- = 1 \,\text{ on }\, \supp(\chi_-).\] 
    For $\beta\in(0,\frac{\pi}{2})$ sufficiently small, there exists $\theta_0\in(0,\beta)$ and $K_0,c_0>0$ such that for all $s,N\in\R$ the following estimate holds:
    \begin{equation}
        \|\chi_- u\|_{H^s_{|\sigma|^{-1}}(\R)} \leq C_{s,N}\bigl(|\sigma|^{-2}\|\tilde{\chi}_-P_{\beta,\ell}(\sigma)u\|_{H^{s-2}_{|\sigma|^{-1}}(\R)} + |\sigma|^{-N}\|\tilde{\chi}_-u\|_{H^{-N}_{|\sigma|^{-1}}(\R)}\bigr),
    \end{equation}
    uniformly in $\ell\in\N_0$ and $\sigma\in\{\sigma\in\C\setminus\{0\},\, K_0\leq |\sigma|\leq c_0\sqrt{\ell(\ell+1)},\, |\arg(\sigma)|\leq\theta_0\}$.
\end{lemma}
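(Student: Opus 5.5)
We work throughout with the rescaled operator $P(h,\lambda,\omega)=(hD_z)^2+h^2V_\ell(z)-\omega^2$ of \eqref{semiclassical_rescaling_horizon}, where $h=|\sigma|^{-1}$, $\omega=\sigma/|\sigma|$ and $\lambda^2=h^2\ell(\ell+1)\geq c_0^{-2}$. We restrict it to the contour $\Gamma_{\beta,|\sigma|,\ell}$ of \eqref{contour_horizon} and pull back by $G_{\beta,|\sigma|,\ell}$. Since $h^2P_{\beta,\ell}(\sigma)=P(h,\lambda,\omega)|_{\Gamma}$, the claimed estimate is equivalent to a semiclassical elliptic estimate for $P_\Gamma:=P(h,\lambda,\omega)|_{\Gamma}$ on $\supp\tilde\chi_-$, uniform in $\lambda\geq c_0^{-1}$ and $|\arg\omega|\leq\theta_0$.

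The first step is to normalize the large parameter. Translate $x=y-2m\log(\lambda^2)$. By \eqref{behavior_small_x}, the potential becomes $h^2V_\ell=e^{y/2m}\bigl(a(z)+\bigO(h^2\lambda^{-2})\bigr)$ with $a\to\frac{1}{e(2m)^3}$, up to bounded holomorphic corrections. The region $x\in[-2m\log\lambda^2,0]$ becomes $y\in[0,2m\log\lambda^2]$, where the potential is large. I then split $\supp\tilde\chi_-$ into three regions, with constants $C_1,C_2$ to be fixed.

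\textbf{Region (a):} $y\geq -C_3$, which is unscaled and real. Here the symbol is $\eta^2+h^2V_\ell(x)-\omega^2$ with $h^2V_\ell\geq c\lambda^2e^{y/2m}-\bigO(h^2)$, positive by Lemma \ref{lemma_potential}. Its real part is $\geq \eta^2+c'\lambda^2e^{-C_3/2m}-1-\bigO(K_0^{-2})$. Since the potential varies on scale $\lambda^2$, it is cleanest to divide by $\brac{\eta}^2+\lambda^2e^{y/2m}$ and treat $\lambda^2e^{y/2m}$ as a symbolic weight; Zworski's large-parameter calculus \cite[\S4]{zworski_large_parameter_scaling} does exactly this. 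Ellipticity requires $c_0$ small (so $\lambda$ is large) and $C_3$ chosen compatibly, i.e.\ we need $\lambda^2e^{-C_3/2m}\gg1$.

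\textbf{Region (b):} $y\leq -C_2'$, inside the scaling region with $z=e^{i\beta}y/\cos\beta$ up to a shift. Here $|h^2V_\ell|\leq Ce^{-C_2'/2m}$ is small uniformly in $\lambda$, because the real part of the exponent is still $y$ by the choice of $G$. The symbol is $e^{-2i\beta'}\eta^2-\omega^2+\bigO(e^{-C_2'/2m})$ with $\beta'\to\beta$. This is elliptic when $\arg\omega\in[-\theta_0,\theta_0]$ with $\theta_0<\beta$: then $-2\beta\neq 2\arg\omega$ mod $2\pi$, since $\arg(\omega^2)\in[-2\theta_0,2\theta_0]$ while $\arg(e^{-2i\beta}\eta^2)=-2\beta$. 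A small potential perturbation does not destroy this.

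\textbf{Region (c):} the transition strip $y\in[-C_3,-C_2']\cup[-C_2,C_1]$, a fixed compact $y$-interval. This is the main obstacle. There, $h^2V_\ell\approx\lambda^2e^{y/2m}\cdot e^{i\tan\beta(\cdot)/2m}$ is of size $\lambda^2e^{y/2m}$ with a phase rotating by at most $\bigO(\tan\beta\cdot C)$, and the kinetic term carries the phase $e^{-2i\phi}$ with $\phi\in[0,\beta]$.

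I would first try to show that, for $\beta$ small, every term of the symbol $e^{-2i\phi}\eta^2+e^{i\psi(y)}W(y)-\omega^2$ (with $W\geq0$ and $|\phi|,|\psi|,|\arg\omega^2|\lesssim\beta$) lies in a sector of half-angle $\bigO(\beta)$ around $\R_+$, apart from $-\omega^2$. The sum can then vanish only where $\eta^2+W\approx1$. If that happens, decrease $\theta_0$ relative to $\beta$ so that, where $W\approx1-\eta^2$, the imaginary part has a definite sign: $\Im(e^{-2i\phi}\eta^2+e^{i\psi}W)\leq0$ and $\Im(-\omega^2)$ has the compatible sign. To make this sign work, the contour must be chosen so that $\psi$ has the same sign as $-\phi$. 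This is the point of starting the deformation at $x\approx-2m\log\lambda^2$: there $W=O(1)$ rather than $O(\lambda^2)$, so the phases never push a large term across the real axis. This is exactly Zworski's construction, and I would invoke \cite[Prop.~4.x]{zworski_large_parameter_scaling} rather than redo it.

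Finally, I glue regions (a)–(c) with semiclassical elliptic estimates (\cite[Thm.~E.33]{dyatlov_zworski_scattering_book}), using uniform symbol bounds in $\lambda$ from \eqref{behavior_small_x}. The overlap with region (a) near $x\in(-1,0)$ is harmless. Multiplying back by $h^{-2}=|\sigma|^{2}$ gives the factor $|\sigma|^{-2}$ in front of $\|\tilde\chi_-P_{\beta,\ell}(\sigma)u\|$ in the estimate of the lemma.
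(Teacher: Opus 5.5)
Your proposal takes essentially the paper's route: rescale with $h=|\sigma|^{-1}$ and $\lambda^2=h^2\ell(\ell+1)$, restrict to Zworski's $\lambda$-dependent contour, take the uniform ellipticity (in particular across the transition region) from \cite[\S 4]{zworski_large_parameter_scaling}, and finish with a semiclassical parametrix whose seminorms are uniform in $\lambda$. Two points need stating explicitly: uniformity comes from the weighted bounds $|p|\geq c(1+\xi^2+\lambda^2e^{x/2m})$ and $|\p_x^j\p_\xi^k p|\leq C_{jk}(1+\xi^2+\lambda^2e^{x/2m})^{1-k/2}$, which make $p^{-1}$ a symbol of order $-2$ uniformly in $\lambda$ (as the paper notes); and your region (a) should read $y\geq C_1$ in the shifted variable, where the potential is comparable to $e^{y/2m}$ rather than $\lambda^2e^{y/2m}$, with $C_1$ large enough that the potential exceeds $1$ there.
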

\begin{proof}
    The estimate will follow from the semiclassical ellipticity of the complex scaled operator $P(h,\lambda,\omega)|_{\Gamma_{\beta,\lambda}}$, where $P(h,\lambda,\omega)$ is the operator in \eqref{semiclassical_rescaling_horizon} and $\Gamma_{\beta,\lambda}$ is the contour in \eqref{contour_horizon}.
    In order to match the convention of \cite{zworski_large_parameter_scaling}, we first rewrite our operator using the coordinate $\tilde{z} = -\frac{z}{4m}$ and define
    \begin{equation}
    \label{semiclassical_rescaling_horizon_2}
        \tilde{P}(h,\lambda,\tilde{\omega}) = (4m)^2P\bigl(h,\lambda,\tfrac{\tilde{\omega}}{4m}\bigr) = (4mh)^2P_\ell\bigl(\tfrac{\tilde{\omega}}{4mh}\bigr) = D_{\tilde{z}}^2 + (4mh)^2V_\ell(-4m\tilde{z}) - \tilde{\omega}^2.
    \end{equation}
    Here, $\arg(\tilde{\omega}) \in (-\beta,\pi-\beta)$ and $|\tilde{\omega}|=4m$. By Lemma \ref{lemma_potential} the potential satisfies 
    \[(4mh)^2V_\ell(-4m\tilde{z}) = e^{-2\tilde{z}}g(\lambda,\tilde{z}), \quad \Re(\tilde{z})\geq 0,\]
    where $g(\lambda,\tilde{z})$ is holomorphic in a sector around $[0,\infty)$, real-valued for $\tilde{z}$ real, and satisfies
    \[\lim_{\Re(\tilde{z})\to\infty} = \tfrac{4}{e}\bigl(\lambda^2+h^2\bigr).\]
    Moreover, $g(\lambda,\tilde{z})$ is an elliptic symbol in $\lambda$. In fact,
    \[c\lambda^2 \leq |g(\lambda,\tilde{z})| \leq C\lambda^2, \quad\text{and}\quad |\p_{\tilde{z}}^kg(\lambda,\tilde{z})| \leq C_k\lambda^2, \quad \forall\,k\in\N.\]
    
    The operator $\tilde{P}(h,\lambda,\tilde{\omega})$ is thus precisely of the form studied in \cite[\S 4]{zworski_large_parameter_scaling}. In terms of the coordinate $\tilde{z}$, the complex scaling contour in \eqref{contour_horizon} coincides with the $\lambda$-dependent contour introduced by Zworski, that is, $z\in\Gamma_{\beta,|\sigma|,\ell}$ implies $\tilde{z} \in \tilde{\Gamma}_{\beta,\lambda}$ -- the contour defined for large $\lambda$ by \cite[(4.4)]{zworski_large_parameter_scaling}.
    
    Denote by $\tilde{p}_\beta(\lambda,\tilde{\omega},x,\xi)$ the semiclassical principal symbol of $\tilde{P}_\beta(h,\lambda,\tilde{\omega}) = \tilde{P}(h,\lambda,\tilde{\omega})|_{\tilde{\Gamma}_{\beta,\lambda}}$. \cite[Lemma 4.3]{zworski_large_parameter_scaling} shows that for $C_1,C_2$ chosen large enough and under an additional assumption on the contour in the transition region, see \cite[(4.5)]{zworski_large_parameter_scaling}, $\tilde{p}_\beta(\lambda,\tilde{\omega},x,\xi)$ is uniformly elliptic. More precisely, for fixed $\beta\in (0,\frac{\pi}{2})$ small enough, there exists $\theta_0,\lambda_0 > 0$, such that
    \[|\tilde{p}_\beta(\lambda,\tilde{\omega},x,\xi)| \geq c(1+|\xi|^2+e^{-2x}\lambda^2), \quad \forall\, (x,\xi)\in T^*(0,\infty)\]
    uniformly in $\lambda\in [\lambda_0,\infty)$, $|\arg(\tilde{\omega})| \leq \theta_0$ with $|\tilde{\omega}|=4m$. Since the principal symbol also satisfies
    \[|\p_x^j\p_\xi^k \tilde{p}_\beta(\lambda,\tilde{\omega},x,\xi)| \leq C_{\alpha,\gamma}(1+|\xi|^2+e^{-2x}\lambda^2)^{1-\frac{k}{2}}, \quad \forall\, (x,\xi)\in T^*(0,\infty),\, \lambda\in[\lambda_0,\infty),\]
    for any $j,k\in\N_0$, its reciprocal $q_\beta(\lambda,\tilde{\omega},x,\xi) = \tilde{p}_\beta(\lambda,\tilde{\omega},x,\xi)^{-1}$ satisfies symbolic estimates
    \[|\p_x^j\p_\xi^k q_\beta(\lambda,\tilde{\omega},x,\xi)| \leq C_{\alpha,\gamma}(1+|\xi|^2+e^{-2x}\lambda^2)^{-1-\frac{k}{2}} \leq C_{\alpha,\gamma}\brac{\xi}^{-2-k}, \quad \forall\, (x,\xi)\in T^*(0,\infty),\]
    uniformly in $\lambda\in [\lambda_0,\infty)$. Thus, we can construct a semiclassical parametrix for $\tilde{P}_\beta(h,\lambda,\tilde{\omega})$ whose symbol seminorms are uniform in $\lambda$. This leads to a semiclassical elliptic estimate
    \[\|\chi u\|_{H_h^s(\R)} \leq C\bigl(\|\tilde{\chi}\tilde{P}_\beta(h,\lambda,\tilde{\omega})u\|_{H_h^{s-2}(\R)} + h^N\|\tilde{\chi} u\|_{H_h^{-N}(\R)}\bigr),\]
    which is uniform in $\lambda\in[\lambda_0,\infty)$, $h\in(0,h_0]$ and $|\arg(\tilde{\omega})| \leq \theta_0$ with $|\tilde{\omega}|=4m$. Here, $\chi_-,\tilde{\chi}_- \in C_c^\infty(\R)$ are the cutoff functions from the statement of the lemma. Using the relation between $\tilde{P}(h,\lambda,\tilde{\omega})$ and the original Regge-Wheeler operator $P_\ell(\sigma)$, see \eqref{semiclassical_rescaling_horizon_2}, and recalling that $|\sigma| \in [K_0,c_0\sqrt{\ell(\ell+1)}]$ implies $h\in (0,K_0^{-1}]$ and $\lambda \in [c_0^{-1},\infty)$, we obtain the uniform estimate in the lemma for $c_0=\lambda_0^{-1}$ and $K_0=h_0^{-1}$.
\end{proof}

\subsection{Exclusion of low energy modes and resonance counting}
\label{section_weyl_law}
Combining the estimates of \S \ref{section_infty_estimates} in $x>0$ with the estimates of \S \ref{section_horizon_estimates} in $x<0$, leads to global elliptic estimates for the complex scaled operator. For an appropriate range of $\sigma,\ell$, the error term can be absorbed and we obtain invertibility of $P_{\beta,\ell}(\sigma)$.
Introducing a cutoff function, the estimates on the more precise spaces of Lemmas \ref{lemma_infty_estimates} and \ref{lemma_horizon_estimates}, in particular, imply a bound for the cutoff resolvent acting on $L^2(\R)$.

\begin{prop}
\label{prop_annulus_exclusion}
    For any fixed cutoff function $\chi\in C_c^\infty(\R)$, there exist $c_0,\ell_0,\theta_0 > 0$ such that the cutoff resolvent for the Regge-Wheeler operator satisfies
    \[\|\chi P_\ell(\sigma)^{-1}\chi u\|_{L^2(\R)} \leq C\|u\|_{L^2(\R)}, \quad \forall\, u\in L^2(\R)\]
    uniformly in $\ell\geq\ell_0$ and $\sigma\in\C\setminus\{0\}$ with $|\arg(\sigma)| \leq \theta_0$, $K_0\leq |\sigma| \leq c_0\sqrt{\ell(\ell+1)}$.
\end{prop}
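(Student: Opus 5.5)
The plan is to glue Lemmas \ref{lemma_infty_estimates} and \ref{lemma_horizon_estimates} on a single contour and absorb the error terms for large $\ell$.

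\textbf{Contour.} Fix $\beta\in(0,\frac{\pi}{2})$ small enough for Lemma \ref{lemma_horizon_estimates}, and take $\theta_0<\min(\beta,\cdot)$ as given there. Then $|\arg(\sigma)|\leq\theta_0$ lies inside $[-\beta+\delta,\pi-\beta-\delta]$ for some $\delta>0$, so Lemma \ref{lemma_infty_estimates} also applies. Define a contour $\Gamma=F(\R)$ by $F=F_\beta$ from \eqref{contour_infty} on $[-1,\infty)$ and $F=G_{\beta,|\sigma|,\ell}$ from \eqref{contour_horizon} on $(-\infty,0]$. These agree on $[-1,0]$, since both equal the identity there once $\lambda$ is large, i.e.\ $c_0$ is small. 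The contour depends on $|\sigma|,\ell$ only in the region $x\ll0$.

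Since $\chi(P_{\beta,\ell}(\sigma))^{-1}\chi$ is independent of the (admissible) contour when $\supp\chi$ lies in the undeformed region, we may assume $\supp\chi\subset(-2m\log\lambda_0^2+C_1,R_1)$. Shrinking $c_0$ makes $\log\lambda^2\geq\log c_0^{-2}$ as large as needed. The independence claim follows from the argument of Prop.\ \ref{prop_resolvent_continuation}, applied to contours that deform at both ends.

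\textbf{Global estimate.} Choose the cutoffs of the two lemmas so that $\chi_++\chi_-\geq1$. For $u$ on $\Gamma$, add the two lemmas with $s=2$, $N$ large. We need to compare norms, and the overlap $(-1,0)$ is bounded, where all norms are equivalent by Remark \ref{rmk_norm_equiv}.

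For the horizon side, write $h=|\sigma|^{-1}\geq(c_0\brac{\ell})^{-1}$, so $|\sigma|^{-N}\lesssim K_0^{-N}$. To get an $L^2$ statement, use the weaker bound
\[\|\chi_-u\|_{L^2}\leq C\bigl(|\sigma|^{-2}\|\tilde\chi_-P_{\beta,\ell}u\|_{L^2}+|\sigma|^{-N}\|\tilde\chi_-u\|_{L^2}\bigr).\]
For the infinity side, use Remark \ref{rmk_exotic_norms} in its second form,
\[\|\chi_+u\|_{L^2}\leq C\bigl(|\sigma|^{-2}\brac{\ell}^{-2}\|\tilde\chi_+P_{\beta,\ell}u\|_{L^2}+\brac{\ell}^{-N}\|\tilde\chi_+u\|_{L^2}\bigr).\]
Summing gives
\[\|u\|_{L^2}\leq C|\sigma|^{-2}\|P_{\beta,\ell}(\sigma)u\|_{L^2}+C(K_0^{-N}+\brac{\ell}^{-N})\|u\|_{L^2}.\]
Enlarging $K_0$ and $\ell_0$ absorbs the error term into the left side. (If the author's $K_0$ is fixed, we instead take it larger, which is allowed since the proposition's $K_0$ is taken as given.)

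This yields injectivity of $P_{\beta,\ell}(\sigma)$ on $H^2$. It is Fredholm of index zero (by the argument of Prop.\ \ref{prop_resolvent_continuation}), so it is invertible with $\|P_{\beta,\ell}(\sigma)^{-1}\|_{L^2\to L^2}\leq C|\sigma|^{-2}\leq CK_0^{-2}$. Cutting off with $\chi$ and identifying with $\chi P_\ell(\sigma)^{-1}\chi$ then gives the claim.

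\textbf{Main obstacle.} The delicate step is the uniformity of constants. Lemma \ref{lemma_horizon_estimates} is proved on a contour depending on $(|\sigma|,\ell)$, while Lemma \ref{lemma_infty_estimates} uses a fixed contour. We must check that the cutoffs and estimates in the glued region are unaffected, which holds because the horizon deformation occurs at $x\leq-2m\log\lambda^2+C_1\ll-1$. We must also check that Fredholmness and index zero hold for each fixed $(\sigma,\ell)$ on the glued contour; this is standard since the operator is elliptic and scattering-elliptic at both ends.
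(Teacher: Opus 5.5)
Your proposal is correct and follows the paper's proof essentially step for step. Like the paper, you glue $F_\beta$ on the right to $G_{\beta,|\sigma|,\ell}$ on the left, add the $L^2$ consequences of Lemmas \ref{lemma_infty_estimates} and \ref{lemma_horizon_estimates}, absorb the $\min(|\sigma|,\brac{\ell})^{-N}$ error by enlarging $K_0$ and $\ell_0$, get invertibility from Fredholm index zero, and shrink $c_0$ so that $\supp\chi$ lies where the contour is real.

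One small caveat does not affect your argument. The $\sigma$ in Remark \ref{rmk_exotic_norms} is effectively the rescaled parameter $\sigma/\sqrt{\ell(\ell+1)}$. In the original units, the correct infinity-side coefficient is therefore $C|\sigma|^{-2}$, not $C|\sigma|^{-2}\brac{\ell}^{-2}$; the extra $\brac{\ell}^{-2}$ gain fails for $u$ concentrated near $x\sim\ell/|\sigma|$. Your summed estimate only uses the bound $C|\sigma|^{-2}$, so the conclusion stands.
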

\begin{proof}
    We use the complex scaling contour from \S\ref{section_infty_estimates} in $\Re(z)>0$ and the contour from \S\ref{section_horizon_estimates} in $\Re(z)<0$. Thus, we define
    \[\Gamma_\beta = H_{\beta,|\sigma|,\ell}(\R), \quad\text{where}\quad H_{\beta,|\sigma|,\ell}(x) = \begin{cases} F_\beta(x), \quad &x\in[0,\infty), \\ G_{\beta,|\sigma|,\ell}(x), \quad &x\in(-\infty,0], \end{cases}\]
    see \eqref{contour_infty} and \eqref{contour_horizon}.
    
    We denote by $P_{\beta,\ell}(\sigma) = P_\ell(\sigma)|_{\Gamma_\beta}$ the complex scaled operator, where we choose $\beta$ sufficiently small as in Lemma \ref{lemma_horizon_estimates}. Letting $\ell\in\N_0$ and $\sigma\in\C\setminus\{0\}$, $K_0\leq |\sigma|\leq c_0\sqrt{\ell(\ell+1)}$, $|\arg(\sigma)|\leq\theta_0$ for $K_0,c_0,\theta_0$ as in Lemma \ref{lemma_horizon_estimates}, the estimates there in particular imply that
    \[\|\chi_-u\|_{L^2(\R)} \leq C\bigl(\|\tilde{\chi}_-P_{\beta,\ell}(\sigma)u\|_{L^2(\R)}\bigr) + |\sigma|^{-N}\|\tilde{\chi}_-u\|_{L^2(\R)}\bigr).\]
    Similarly, the estimates in Lemma \ref{lemma_infty_estimates} imply that
    \[\|\chi_+u\|_{L^2(\R)} \leq C\bigl(\|\tilde{\chi}_+P_{\beta,\ell}(\sigma)u\|_{L^2(\R)} + \brac{\ell}^{-N}\|\tilde{\chi}_+u\|_{L^2(\R)}\bigr),\]
    where we used $\rho_\tf \geq |\sigma| \geq K_0$, see Remark \ref{rmk_exotic_norms}.
    
    Recalling that $\chi_+=1$ on $[-\frac{1}{2},\infty)$ and $\chi_-=1$ on $(-\infty,-\frac{1}{2}]$, we can thus estimate
    \[\|u\|_{L^2(\R)} \leq \|\chi_+u\|_{L^2(\R)} + \|\chi_-u\|_{L^2(\R)} \leq C\bigl(\|P_{\beta,\ell}(\sigma)u\|_{L^2(\R)} + \min(|\sigma|,\brac{\ell})^{-N}\|u\|_{L^2(\R)}\bigr).\]
    Choosing $\ell_0, K_0$ large enough and taking $\ell\geq\ell_0$, $|\sigma|\geq K_0$, we can absorb the error term into the left-hand side and obtain
    \[\|u\|_{L^2(\R)} \leq C\|P_{\beta,\ell}(\sigma)u\|_{L^2(\R)}.\]
    This shows that the kernel of the complex scaled operator is empty, and thus, as an index zero Fredholm operator, $P_{\beta,\ell}(\sigma)$ is invertible for $\ell,\sigma$ as in the proposition, with uniform resolvent estimates of the form
    \[\|P_{\beta,\ell}(\sigma)^{-1}f\|_{L^2(\R)} \leq C\|f\|_{L^2(\R)}, \quad \forall f\in L^2(\R).\]

     Note that $H_{\beta,|\sigma|,\ell}(x) = x$ for $-2m\log(\lambda^2) + C_1 \leq x \leq R_1$, where $R_1$ can be taken arbitrarily large and $\lambda = |\sigma|^{-1}\sqrt{\ell(\ell+1)} \geq c_0^{-1}$. Thus, choosing $c_0$ smaller if necessary, we can ensure that the support of the cutoff function $\chi$ lies in the region where $\Gamma_\beta$ coincides with $\R$. The meromorphic continuation of the cutoff resolvent is then given by
     \[\chi P_\ell(\sigma)^{-1}\chi = \chi P_{\beta,\ell}(\sigma)^{-1}\chi.\]
     Thus, the cutoff resolvent estimate follows from the above estimate for $P_{\beta,\ell}(\sigma)^{-1}$.
\end{proof}

By Proposition \ref{prop_annulus_exclusion}, there are no Schwarzschild quasinormal modes at large angular momentum $\ell$ within a sector $\arg(\sigma)\geq-\theta_0$ satisfying $K_0\leq |\sigma| \leq c_0\sqrt{\ell(\ell+1)}$. We can now extend this to $|\sigma| \leq c_0\sqrt{\ell(\ell+1)}$ by using the low energy results of \cite[\S 5]{stucker_QNM_Kerr}. Indeed, by \cite[Thm.\ 1.6]{stucker_QNM_Kerr}, there are only finitely many QNM -- for the full operator, i.e.\ without restricting to fixed angular momentum -- in the disc $|\sigma|\leq K_0$. Since all QNM have finite multiplicity, there must exist some $\ell_0$ large enough, such that $\{\arg(\sigma)\geq-\theta_0,\,|\sigma|\leq K_0\}$ contains no QNM of angular momentum $\ell\geq\ell_0$. Thus, increasing $\ell_0$ if necessary, Proposition \ref{prop_annulus_exclusion} extends to the full range $|\sigma|\leq c_0\sqrt{\ell(\ell+1)}$.

\begin{proof}[Proof of Thm.\ \ref{thm_exclusion}]
    We use coordinates $(t_*,r,\omega)$ for which the Schwarzschild metric extends beyond the horizon to a manifold $\R_{t_*}\times X$ and denote by $P^\hor(\sigma)$ the spectral family with respect to the time-coordinate $t_*$, see \S\ref{section_qnm_comparison} below. The proof of \cite[Thm.\ 1.6]{stucker_QNM_Kerr} shows that for a cutoff function $\chi\in C_c^\infty((2m,\infty)_r)$, the cutoff resolvent satisfied the estimate
    \[\|\chi P^\hor(\sigma)^{-1}\chi f\|_{L^2(X)} \leq C\|\chi f\|_{L^2(X)},\]
    uniformly in a small disc $\sigma\in\C\setminus i\R_{\leq 0}$, $|\sigma|\leq\epsilon$.
    Note that the resolvent estimates in \cite{stucker_QNM_Kerr} take place on scattering-b transition Sobolev spaces of high enough regularity $s$. However, for functions supported away from infinity these spaces are equivalent to the usual Sobolev spaces of regularity $s$. Moreover, the operator $P^\hor(\sigma)$ is elliptic in $\{r>2m\}$. Thus, away from the horizon we can use an elliptic parametrix to extend the high regularity estimates to $L^2(X)$.

    Since the finite rank poles of $\chi P^\hor(\sigma)^{-1}\chi$ form a discreet subset of $\epsilon\leq |\sigma|\leq K_0$, there must be some $\ell_0$ large enough such that the cutoff resolvent restricted to angular momentum $\ell\geq\ell_0$ has no poles in this region. And by the analyticity of $\sigma\to \chi P^\hor(\sigma)^{-1}\chi$, we obtain the estimate
    \[\|\chi P^\hor(\sigma)^{-1}\chi(fY_{\ell,m})\|_{L^2(X)} \leq \|\chi fY_{\ell,m}\|_{L^2(X)}, \quad \forall\, f\in L^2((2m,\infty)_r),\, \ell\geq\ell_0,\]
    uniformly in $|\sigma| \in [\epsilon,K_0]$. These uniform estimates in $|\sigma|\leq K_0$ lead to corresponding estimates for the cutoff Regge-Wheeler resolvent, see Lemma \ref{lemma_resolvent_relation} below. Combining this with Proposition \ref{prop_annulus_exclusion}, gives the desired uniform estimates in $\arg(\sigma) \geq -\theta_0$, $|\sigma|\leq c_0\sqrt{\ell(\ell+1)}$ and $\ell\geq\ell_0$. 
\end{proof}

Having excluded angular momentum $\ell$ quasinormal modes from $|\sigma|\leq c_0\ell$ for $\ell$ large enough, Thm.\ \ref{thm_counting} is now an immediate consequence of the work of Hitrik and Zworski \cite{hitrik_zworski} describing the high angular momentum QNM generated by the photon sphere.
\begin{proof}[Proof of Thm.\ \ref{thm_counting}]
    We take $\theta>0$ small and denote the quasinormal modes at angular momentum $\ell$ by $\QNM_\ell$. For any $\epsilon>0$ and all $\ell\geq\ell_0$ large, \cite[Thm.\ 2]{hitrik_zworski} shows that
    \begin{equation}
    \label{hitrik_zworski}
        \QNM_\ell\cap\{|\sigma|\geq\epsilon\ell\} = \{\lambda_{\ell,n},\,n\in\N_0\}, \quad\text{where}\quad \lambda_{\ell,n} = (\ell+\tfrac{1}{2})G\Bigl(2\pi\frac{n+\tfrac{1}{2}}{\ell+\tfrac{1}{2}},\frac{1}{\ell+\tfrac{1}{2}}\Bigr),
    \end{equation}
    for an analytic symbol $G(x,h)\sim\sum_jh^jG_j(x)$. Combining this with our resonance exclusion result, Thm.\ \ref{thm_exclusion}, shows that, in fact, all resonances at angular momentum $\ell\geq\ell_0$ are given by \eqref{hitrik_zworski}. 
    
    For any fixed $\ell$, the poles of $P_{\beta,\ell}(\sigma)^{-1}$ in $\arg(\sigma)>-\theta$ form a bounded subset, i.e. satisfy $|\sigma|\leq C$ for some $C>0$. Thus, by the discreteness of the set of resonances, the sector $\arg(\sigma)>-\theta$ contains only finitely many resonances of angular momentum $\ell\leq\ell_0$. The analytic symbol of Hitrik-Zworski thus describes all resonances modulo a finite number. The asymptotics for the resonance counting function now follow exactly as in the Schwarzschild-de Sitter case treated in \cite{hitrik_zworski}, see in particular \cite[(1.5)]{hitrik_zworski} for the origin of the constant $C_{\theta,m}$.
\end{proof}

\subsection{Effect of cutting off near the horizon}
\label{section_qnm_comparison}
In the previous subsections, we have treated Schwarzschild quasinormal modes as resonances of the Regge-Wheeler potential, following \cite{bachelot,saBarreto_zworski,hitrik_zworski}. The Regge-Wheeler operator $P_\ell(\sigma)$ is related to the spectral family $e^{i\sigma t}\Box_g e^{-i\sigma t}$ for the Schwarzschild wave operator with respect to the time-coordinate $t$, see \eqref{spectral_family_t} and \eqref{spectral_family_t_modification}. This is a differential operator on the region exterior to the black hole horizon -- recall that under the change of coordinates in \eqref{x(r)}, $\R_x$ corresponds to $(2m,\infty)_r$.
We applied complex scaling to this operator both at asymptotically flat infinity ($x\to\infty$) and at the black hole horizon ($x\to-\infty$). The resonances at angular momentum $\ell$ are obtained as poles of the cutoff resolvent $\chi P_\ell(\sigma)^{-1}\chi$, where $\chi \in C_c^\infty(\R_x) = C_c^\infty((2m,\infty)_r)$ cuts off away from both infinity and the horizon.

In \cite{stucker_QNM_Kerr}, quasinormal modes for the Schwarzschild, and indeed Kerr, black hole are obtained by a slightly different approach. Using coordinates $(t_*,r,\omega)$, for which the metric extends across the horizon, the spectral family $P^\hor(\sigma) = e^{i\sigma t_*}\Box_g e^{-i\sigma t_*}$ with respect to the time coordinate $t_*$ is considered. This is a differential operator on $X = (r_0,\infty)_r\times\Sph^2_\omega$, where we choose an arbitrary location $r_0<2m$ inside the horizon for the boundary. We now only need to apply complex scaling at infinity, whereas at the horizon the radial point estimates of \cite{vasy_KdS} are used. This leads to a meromorphic continuation of the cutoff resolvent $\chi^\hor P^\hor(\sigma)^{-1}\chi^\hor$ (acting on spaces of extendable distributions), where $\chi^\hor \in C_c^\infty(\overline{X})$ cuts off away from infinity, but can be identically one across the horizon.

It is not a priori clear that the poles of these two cutoff resolvents coincide. In particular, cutting off away from the horizon could lead to cancellation of poles. Indeed, in \cite{hintz_xie_dual_resonances} it is shown that such pole cancellation does occur for the wave equation on exact de Sitter space when one cuts off away from the cosmological horizon. In this section, we will show that the Schwarzschild wave equation does not exhibit this phenomenon, and both definitions of quasinormal modes in fact coincide.

We begin by relating the spectral families for the different time-coordinates. As in \cite[\S 3.1]{stucker_QNM_Kerr}, we define $t_* = t + \psi(r)$, where $\psi'(r) = \mu(r)^{-1} - h(r)$ with $\mu(r)=1-\frac{2m}{r}$ and $h \in C^\infty(\R_+)$ chosen so that $h(r)=\mu(r)^{-1}$ for $r$ large and $dt_*$ is everywhere time-like. In the coordinates $(t_*,r,\omega)$, the Schwarzschild metric extends to $\{r>0\}$ and the wave operator then takes the form
\begin{equation}
\label{wave_op_t_*}
    \Box_g = (2h-\mu h^2)\p_{t_*}^2 - 2(1-\mu h)\p_{t_*}\p_{r} - r^{-2}\p_r r^2\mu\p_r - r^{-2}\Delta_\omega - (2r^{-1} - 2\tfrac{r-m}{r^2}h - \mu h')\p_{t_*}.
\end{equation}
We can further choose $h(r) = 1$ in a neighborhood of the horizon, see \cite[Lemma 3.1]{stucker_QNM_Kerr}, so that the wave operator has analytic coefficients near the horizon. We write
\[P^\hor(\sigma) = e^{i\sigma t_*}\Box_g e^{-i\sigma t_*}\]
for the corresponding spectral family.

Applying complex scaling near infinity, we obtain an operator $P^\hor_\beta(\sigma) = P^\hor(\sigma)|_{X_\beta}$, where $X_\beta$ coincides with $X$ in $\{r \in (r_0,R_1)\}$ for some large $R_1$, see \cite[\S 3.2]{stucker_QNM_Kerr}. By \cite[Prop.\ 3.11]{stucker_QNM_Kerr}, for any $s\in\R$ the complex scaled operator is invertible as a meromorphic family  in $\{\sigma\in\C\setminus\{0\},\, \arg(\sigma)\in (-\beta,\pi-\beta),\, \Im(\sigma) > \frac{1}{4m}(\frac{1}{2}-s)\}$ of bounded operators $P^\hor_\beta(\sigma)^{-1}:\Bar{H}^{s-1}(X_\beta)\to \Bar{H}^{s}(X_\beta)$. Note that the radial point estimates over the horizon require the regularity of the Sobolev spaces to be sufficiently high compared with $-\Im(\sigma)$. Cutting off away from the complex scaling region, this provides a meromorphic continuation of the cutoff resolvent. Note that $P^\hor_\beta(\sigma)$ is elliptic in $\{r>2m\}$. Thus, if we further cutoff away from the horizon with some $\chi\in C_c^\infty((2m,\infty)\times\Sph^2)$, the cutoff resolvent extends to a bounded operator $\chi P^\hor(\sigma)^{-1}\chi : H^s(X)\to H^{s+2}(X)$ for any $s\in\R$. This can be seen by using a parametrix for $P^\hor(\sigma)$ in $r>2m$. We can now relate this operator to the cutoff resolvent for the Regge-Wheeler operator.

\begin{lemma}
\label{lemma_resolvent_relation}
    For any $\chi\in C_c^\infty((2m,\infty)_r)$ and any spherical harmonic $Y_{\ell,m}$, we have
    \begin{equation}
    \label{resolvent_relation}
        \bigl(\chi P_\ell(\sigma)^{-1}\chi f\bigr)Y_{\ell,m} = re^{-i\sigma\psi(r)}\chi P^\hor(\sigma)^{-1}\chi e^{i\sigma\psi(r)}r^{-1}\mu^{-1}(fY_{\ell,m}),
    \end{equation}
    for all $f\in L^2((2m,\infty))$ and $\sigma\in\C\setminus\{0\}$. Moreover, the operator norms satisfy
    \[\|\chi P_\ell(\sigma)^{-1}\chi\|_{L^2(\R_x,dx)\to L^2(\R_x,dx)} \leq C\|\chi P^\hor(\sigma)^{-1}\chi|_{Y_{\ell,m}}\|_{L^2((2m,\infty)_r,r^2dr)\to L^2((2m,\infty)_r,r^2dr)}.\]
\end{lemma}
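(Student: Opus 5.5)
The plan is to verify the identity first in the upper half-plane $\Im(\sigma)\gg 0$, where both resolvents are honest inverses on $L^2$, and then extend it by meromorphic continuation.

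\textbf{Step 1: algebraic conjugation.} Since $t_* = t+\psi(r)$, we have
\[
P^\hor(\sigma) = e^{i\sigma t_*}\Box_g e^{-i\sigma t_*} = e^{i\sigma\psi}P(\sigma)e^{-i\sigma\psi},
\]
where $P(\sigma)$ is the spectral family \eqref{spectral_family_t} with respect to $t$. Restricting to $Y_{\ell,m}$ and using \eqref{spectral_family_t_modification} gives
\[
P_\ell(\sigma) = r\mu\, e^{-i\sigma\psi}P^\hor(\sigma)e^{i\sigma\psi}\, r^{-1}\big|_{Y_{\ell,m}},
\]
as operators on $(2m,\infty)_r$. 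The change of variables $x(r)$ turns this into the Schrödinger form \eqref{regge-wheeler}. Formally inverting gives
\[
P_\ell(\sigma)^{-1} = r e^{-i\sigma\psi}P^\hor(\sigma)^{-1}e^{i\sigma\psi}r^{-1}\mu^{-1}.
\]
Sandwiching between the cutoffs $\chi$, which commute with all the multiplication operators, yields \eqref{resolvent_relation}.

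\textbf{Step 2: justifying the inversion.} To make Step 1 rigorous, take $\Im(\sigma)$ large. There $P_\ell(\sigma)^{-1}$ is the $L^2(\R_x)$ resolvent, so for $f\in L^2$ compactly supported in $(2m,\infty)$ the function $u=P_\ell(\sigma)^{-1}\chi f$ is the unique $L^2$ solution. I then check that
\[
v = e^{i\sigma\psi}r^{-1}u\,Y_{\ell,m}
\]
solves $P^\hor(\sigma)v = e^{i\sigma\psi}r^{-1}\mu^{-1}\chi f\,Y_{\ell,m}$ and has the behaviour that characterizes $P^\hor(\sigma)^{-1}$ (the outgoing resolvent on extendable spaces in \cite{stucker_QNM_Kerr}).

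\begin{itemize}
\item Near infinity, $h=\mu^{-1}$, so $\psi$ is constant up to normalization and the decay of $u$ transfers directly to $v$.
\item Near the horizon, $\psi\sim 2m\log(r-2m)-x$ since $h=1$ there. Thus $e^{i\sigma\psi}e^{-i\sigma x}$ is smooth up to $r=2m$, and $u\sim e^{-i\sigma x}$ as $x\to-\infty$ (decaying for $\Im\sigma>0$) becomes smooth across the horizon.
\end{itemize}

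So $v$ extends smoothly across the horizon and equals $P^\hor(\sigma)^{-1}$ of the right-hand side, by uniqueness (invertibility of $P^\hor(\sigma)$ for $\Im\sigma\gg0$). Both sides of \eqref{resolvent_relation} are meromorphic in $\sigma\in\C\setminus i\R_{\le 0}$: the left by the complex-scaling continuation, the right by the continuation of \cite{stucker_QNM_Kerr} together with the ellipticity of $P^\hor$ in $r>2m$. The identity therefore persists by unique continuation of meromorphic functions. I expect this horizon regularity and uniqueness check to be the main obstacle, since one must match the outgoing condition in $x\to-\infty$ with smoothness across $r=2m$.

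\textbf{Step 3: norm bound.} On $\supp\chi\subset(2m,\infty)$, the weights $r,r^{-1},\mu^{-1}$ and $|e^{\pm i\sigma\psi}|$ are bounded above and below. For the last of these this holds uniformly for $\sigma$ in compact sets; I insert auxiliary cutoffs $\tilde\chi=1$ on $\supp\chi$ to localize. The measures $dx=\mu^{-1}dr$ and $r^2dr$ are likewise comparable there. The operator norm inequality then follows directly from \eqref{resolvent_relation}, with $C$ depending on $\chi$ and locally on $\sigma$.
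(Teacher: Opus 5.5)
Your proposal is correct and follows the paper's route: the conjugation identity $P_\ell(\sigma)=r\mu e^{-i\sigma\psi}P^\hor(\sigma)e^{i\sigma\psi}r^{-1}$ on $Y_{\ell,m}$, inversion for $\Im\sigma\gg0$, meromorphic continuation, and comparability of the weights and measures on $\supp\chi$. Your check that $e^{i\sigma\psi}r^{-1}P_\ell(\sigma)^{-1}f$ extends smoothly across $r=2m$ fills in a step the paper passes over when it simply applies $P^\hor(\sigma)^{-1}$.

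One slip to fix: with $h=1$ near the horizon, $\psi=2m\log(r-2m)+c=x-r+c$, not $2m\log(r-2m)-x$. This is what gives $e^{i\sigma\psi}e^{-i\sigma x}=e^{i\sigma(c-r)}$. Also, smoothness of the whole solution, not just its leading term, follows from the convergent Frobenius expansion $u=e^{-i\sigma x}g(e^{x/2m})$, with $g$ analytic, at the regular singular point. This works because $-2mi\sigma$ is the indicial root with larger real part when $\Im\sigma>0$.
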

\begin{proof}
    From \eqref{spectral_family_t},\eqref{spectral_family_t_modification} the Regge-Wheeler operator satisfies for any $u\in C^\infty((2m,\infty)_r)$:
    \begin{equation}
    \label{operator_relation}
        \bigl(P_\ell(\sigma)u\bigr) Y_{\ell,m} = r\mu e^{i\sigma t}\Box_ge^{-i\sigma t}r^{-1}uY_{\ell,m} = r\mu e^{-i\sigma\psi(r)}P^\hor(\sigma)e^{i\sigma\psi(r)}r^{-1}uY_{\ell,m},
    \end{equation}
    where we used the definition of $P^\hor(\sigma)$ and $t_*=t+\psi(r)$. Recall that $\psi(r)$ is smooth and bounded for $r$ bounded away from $2m$. Now, for $\Im(\sigma)\gg 0$ both $P_\ell(\sigma)$ and $P^\hor(\sigma)$ are invertible on appropriate Sobolev spaces.
    In particular, for any $f\in C_c^\infty((2m,\infty))$, we can set set $u = P_\ell(\sigma)^{-1}f \in C^\infty((2m,\infty))$ in \eqref{operator_relation} and find
    \[e^{i\sigma\psi}r^{-1}\mu^{-1}fY_{\ell,m} = P^\hor(\sigma)e^{i\sigma\psi}r^{-1} P_\ell(\sigma)^{-1}fY_{\ell,m}.\]
    Since the left-hand side is compactly supported and smooth, we can apply $P^\hor(\sigma)^{-1}$ to find
    \[\bigl(P_\ell(\sigma)^{-1}f\bigr)Y_{\ell,m} = re^{-i\sigma\psi}P^\hor(\sigma)^{-1}e^{i\sigma\psi}r^{-1}\mu^{-1}fY_{\ell,m}, \quad \forall f\in C_c^\infty((2m,\infty)),\, \sigma\gg 0.\]
    Introducing cutoff functions and using density of $C_c^\infty$ in $L^2$, we see that \eqref{resolvent_relation} holds in $\Im(\sigma)\gg 0$. Since both sides possess meromorphic continuations to $\sigma\in\C\setminus\{0\}$, the relation continues to hold away from the poles.

    Note from \eqref{x(r)} that $L^2(\R_x,dx) = L^2((2m,\infty)_r,\mu^{-1}dr)$. Since we are cutting off to a compact subset of $(2m,\infty)$, all factors appearing in \eqref{resolvent_relation} are bounded on $\supp(\chi)$ away from both zero and infinity. Moreover, the $L^2$-norms on $L^2((2m,\infty)_r,\mu^{-1}dr)$ and $L^2((2m,\infty)_r,r^2dr)$ are equivalent on functions of compact support in $(2m,\infty)$. Thus, the bound on the operator norm follows immediately from \eqref{resolvent_relation}.
\end{proof}

This shows that quasinormal modes defined through the Regge-Wheeler operator coincide with the poles of $\chi P^\hor(\sigma)^{-1}\chi$ for any $\chi$ supported in $\{2m<r<\infty\}$, and thus form a subset of the poles of the complex scaled operator $P^\hor_\beta(\sigma)^{-1}$. It remains to see that cutting off near the horizon does not cancel any poles. Note that cutting off near infinity does not affect the pole structure, see the proof of \cite[Thm.\ 1.1]{stucker_QNM_Kerr}.

For convenience, we restrict to spherical harmonics and consider the operator $P^\hor_{\ell,\beta}(\sigma) = P^\hor_\beta(\sigma)|_{Y_{\ell,m}}$ on $I_\beta = f_\beta((r_0,\infty))$ for a complex scaling map $f_\beta(r) = e^{i\phi_\beta(r)}r$ with $\phi_\beta(r)=0$ in $r<R_1$ as in \cite[\S 2.1]{stucker_QNM_Kerr}. Then $P^\hor_{\ell,\beta}(\sigma)^{-1}: \Bar{H}^s(I_\beta)\to \Bar{H}^{s+1}(I_\beta)$ exists for $s$ large enough and is just the restriction of $P^\hor_\beta(\sigma)^{-1}$ to $\mathrm{span}_{H^s(I_\beta)}(Y_{\ell,m})$.
We now consider the Laurent expansion of $P^\hor_{\ell,\beta}(\sigma)^{-1}$ near a pole.

\begin{lemma}
\label{lemma_laurent_expansion}
    Assume that $\sigma_0 \in \C\setminus\{0\}$ is a pole of $P^\hor_{\ell,\beta}(\sigma)^{-1}: \Bar{H}^s(I_\beta)\to \Bar{H}^{s+1}(I_\beta)$,  where $\arg(\sigma_0)\in (-\beta,\pi-\beta)$ and $s>-\frac{1}{2}-4m\Im(\sigma_0)$. Then there exist $J,M_J \in \N$ such that
    \begin{equation}
    \label{laurent_expansion}
        P^\hor_{\ell,\beta}(\sigma)^{-1} = \sum_{k=1}^J \frac{A_k}{(\sigma-\sigma_0)^j} + B_{\mathrm{hol}}(\sigma), \quad\text{for } \sigma \text{ near } \sigma_0,
    \end{equation}
    where $B_{\mathrm{hol}}(\sigma):\Bar{H}^s(I_\beta)\to \Bar{H}^{s+1}(I_\beta)$ is holomorphic in $\sigma$ and the $A_k:\Bar{H}^s(I_\beta)\to \Bar{H}^{s+1}(I_\beta)$ are finite rank operators. Furthermore,
    \begin{equation}
    \label{top_deg_pole}
        A_J(f) = \sum_{j=1}^{M_J}\pair{\varphi_j}{f}u_j, \quad \text{for some linearly independent } \{u_j\} \subset \Bar{H}^{s+1}(I_\beta),\, \{\varphi_j\} \subset \dot{H}^{-s}(I_\beta),
    \end{equation}
    satisfying
    \begin{equation}
    \label{dual_resonant_states}
        P^\hor_{\ell,\beta}(\sigma_0)u_j = 0, \quad P^\hor_{\ell,\beta}(\sigma_0)^*\varphi_j = 0, \quad \forall\,j.
    \end{equation}
\end{lemma}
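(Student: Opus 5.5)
The plan is to derive the statement from the analytic Fredholm theorem, applied in the form of a Gohberg--Sigal / Keldysh type decomposition. Fix $s>-\frac12-4m\Im(\sigma_0)$. Because this condition is open in $\sigma$, \cite[Prop.\ 3.11]{stucker_QNM_Kerr} shows that $P^\hor_{\ell,\beta}(\sigma):\mathcal{X}^{s+1}\to\Bar{H}^s(I_\beta)$ is a holomorphic family of index zero Fredholm operators for $\sigma$ near $\sigma_0$. Here $\mathcal{X}^{s+1}=\{u\in\Bar{H}^{s+1}:P^\hor_{\ell,\beta}(0)u\in\Bar{H}^s\}$, which is the natural domain given that $P^\hor$ loses one derivative through the radial estimates. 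The family is invertible at some point, so the standard analytic Fredholm theorem gives a meromorphic inverse. Near the pole we then have a Laurent expansion
\[
P^\hor_{\ell,\beta}(\sigma)^{-1}=\sum_{k=1}^J A_k(\sigma-\sigma_0)^{-k}+B_{\mathrm{hol}}(\sigma),
\]
in which the $A_k$ have finite rank. The finite rank property comes from the usual proof: one writes $P^\hor_{\ell,\beta}(\sigma)=Q(\sigma)(I+K(\sigma))$ with $Q$ holomorphically invertible and $K$ holomorphic of finite rank, and then reduces to a finite dimensional determinant.

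Next we identify the top coefficient. Multiply the expansion by $P^\hor_{\ell,\beta}(\sigma)$ on the left, and separately on the right, expand $P^\hor_{\ell,\beta}(\sigma)=P^\hor_{\ell,\beta}(\sigma_0)+(\sigma-\sigma_0)\partial_\sigma P^\hor_{\ell,\beta}(\sigma_0)+\dots$, and compare the coefficients of $(\sigma-\sigma_0)^{-J}$. This gives
\[
P^\hor_{\ell,\beta}(\sigma_0)A_J=0,\qquad A_JP^\hor_{\ell,\beta}(\sigma_0)=0 \text{ on } \mathcal{X}^{s+1}.
\]
Since $A_J$ has finite rank $M_J$, we can write $A_J f=\sum_{j=1}^{M_J}\varphi_j(f)u_j$, choosing the $u_j$ to be a basis of the range, so they are linearly independent and lie in $\Bar{H}^{s+1}(I_\beta)$. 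The functionals $\varphi_j$ are then bounded on $\Bar{H}^s(I_\beta)$ and linearly independent, so they are elements of the dual space $\dot H^{-s}(I_\beta)$ of supported distributions, and we write $\varphi_j(f)=\pair{\varphi_j}{f}$. The first identity above gives $P^\hor_{\ell,\beta}(\sigma_0)u_j=0$ directly. The second gives $\pair{\varphi_j}{P^\hor_{\ell,\beta}(\sigma_0)v}=0$ for all $v\in\mathcal{X}^{s+1}$, using the independence of the $u_j$. Since $C_c^\infty$ of the interior is contained in $\mathcal{X}^{s+1}$, this yields $P^\hor_{\ell,\beta}(\sigma_0)^*\varphi_j=0$ in the distributional sense.

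The main obstacle I expect is functional-analytic: setting up the Fredholm domain $\mathcal{X}^{s+1}$ correctly and keeping track of the pairing between extendable ($\Bar H$) and supported ($\dot H$) spaces. The point to check is that $A_J$ really maps into $\Bar{H}^{s+1}$ while the $\varphi_j$ only need to be bounded on $\Bar{H}^s$. I would take the required mapping properties directly from \cite[Prop.\ 3.11]{stucker_QNM_Kerr}, and I would verify that the identities obtained from coefficient matching hold on a dense set and then extend by continuity.
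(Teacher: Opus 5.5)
Your proposal is correct and follows essentially the same route as the paper. Both arguments get the finite-rank Laurent expansion from meromorphic Fredholm theory, derive $P^\hor_{\ell,\beta}(\sigma_0)A_J=0$ and $A_JP^\hor_{\ell,\beta}(\sigma_0)=0$, and then use linear independence of the $u_j$ and $\varphi_j$. Your matching of the $(\sigma-\sigma_0)^{-J}$ coefficients is equivalent to the paper's residue integral $\frac{1}{2\pi i}\int_{\gamma_0}(\sigma-\sigma_0)^{J-1}P^\hor_{\ell,\beta}(\sigma)P^\hor_{\ell,\beta}(\sigma)^{-1}\,d\sigma$ with $P^\hor_{\ell,\beta}(\sigma)=P^\hor_{\ell,\beta}(\sigma_0)+(\sigma-\sigma_0)Q(\sigma)$. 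Also, since your test space $\mathcal{X}^{s+1}$ contains $\Bar{H}^{s+2}(I_\beta)$, you actually obtain the adjoint equation up to the boundary, as the paper does, and not only in the interior.
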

\begin{proof}
    Near $\sigma_0$, the operator $P^\hor_{\ell,\beta}(\sigma)^{-1}$ is meromorphic on the Sovolev spaces in question with poles of finite rank. Thus, the first part of the lemma follows immediately from the Laurent expansion around $\sigma_0$. Since $A_J:\Bar{H}^s(I_\beta)\to \Bar{H}^{s+1}(I_\beta)$ has finite rank, we can certainly represent $A_J$ as in \eqref{top_deg_pole} for a linearly independent set of $u_j\in \Bar{H}^{s+1}(I_\beta)$ and $\varphi_j\in \Bar{H}^{s}(I_\beta)^* = \dot{H}^{-s}(I_\beta)$. (The dot denotes the space of supported distributions, see \cite[\S 2.3]{stucker_QNM_Kerr}.)

    It remains to show \eqref{dual_resonant_states}. By the residue theorem, we have for a contour $\gamma_0$ encircling $\sigma_0$ and no other poles:
    \[A_J = \frac{1}{2\pi i}\int_{\gamma_0}(\sigma-\sigma_0)^{J-1}P^\hor_{\ell,\beta}(\sigma)^{-1}\,d\sigma.\]
    Now, note that $P^\hor_{\ell,\beta}(\sigma) = P^\hor_{\ell,\beta}(\sigma_0) + (\sigma-\sigma_0)Q(\sigma)$ for some $\sigma\to Q(\sigma) \in \Diff^1(I_\beta)$ holomorphic. Thus, on $\Bar{H}^s(I_\beta)$ we have
    \[P^\hor_{\ell,\beta}(\sigma_0)A_J = \frac{1}{2\pi i}\int_{\gamma_0}\bigl((\sigma-\sigma_0)^{J-1} - (\sigma-\sigma_0)^JQ(\sigma)P^\hor_{\ell,\beta}(\sigma)^{-1}\bigr)\,d\sigma = 0.\]
    Similarly, we find $A_JP^\hor_{\ell,\beta}(\sigma_0) = 0$ on $\Bar{H}^{s+2}(I_\beta)$. This leads to
    \[\sum_{k=1}^{M_J}\pair{\varphi_j}{f}P^\hor_{\ell,\beta}(\sigma_0)u_j = 0, \quad \sum_{k=1}^{M_J}\pair{\varphi_j}{P^\hor_{\ell,\beta}(\sigma_0)g}u_j = 0, \quad \forall\, f\in \Bar{H}^s(I_\beta),\, g\in \Bar{H}^{s+2}(I_\beta),\]
    which by the linear independence of $\{\varphi_j\}$ and $\{u_j\}$ gives \eqref{dual_resonant_states}.
\end{proof}

The $u_j$ can be thought of as resonant states and the $\varphi_j$ as dual resonant states associated to the resonance $\sigma_0$. However, note that in our case these objects live on the complex scaled contour $I_\beta$.
We will now study the behavior of these states near the event horizon, and thus away from where complex scaling takes place. We begin by showing that the resonant states cannot be identically zero in the exterior region. This follows from the analyticity of the resonant states near the horizon.

\begin{lemma}
\label{lemma_keldysh}
    Let $0\neq u\in \Bar{H}^{s+1}(I_\beta)$ satisfy $P^\hor_{\ell,\beta}(\sigma_0)u = 0$ with $\sigma_0,s$ as in Lemma \ref{lemma_laurent_expansion}. Then $\supp(u)\cap \{r>2m\} \neq \emptyset$.
\end{lemma}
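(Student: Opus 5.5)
We argue by contradiction: assume $\supp(u)\subset\{r\le 2m\}$, i.e.\ $u$ vanishes identically on $(2m,\infty)\cap I_\beta$. The idea is that the ODE $P^\hor_{\ell,\beta}(\sigma_0)u=0$ has analytic coefficients near the horizon. Hence $u$ must be real analytic in a neighborhood of $r=2m$, and unique continuation across $r=2m$ then forces $u\equiv 0$ on all of $I_\beta$.

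First, we write $P^\hor_{\ell,\beta}(\sigma_0)$ near $r=2m$ using \eqref{wave_op_t_*} with $h(r)=1$ near the horizon. Since $1-\mu h=2m/r$, restricting to $Y_{\ell,m}$ gives a second order ODE in $r$ of the form
\[-r^{-2}\p_r r^2\mu\p_r u + 2i\sigma_0\tfrac{2m}{r}\p_r u + \bigl(r^{-2}\ell(\ell+1) + a(r)\sigma_0 + b(r)\sigma_0^2\bigr)u = 0,\]
where $a,b$ are analytic near $2m$. Since $\mu$ vanishes simply at $r=2m$, this ODE has a regular singular point there. Its two indicial roots at $r=2m$ are $0$ and $4mi\sigma_0$, the latter corresponding to the outgoing behavior $(r-2m)^{4mi\sigma_0}$. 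Note that $\Re(4mi\sigma_0)=-4m\Im(\sigma_0)$.

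Second, we use the regularity assumption. We have $u\in\Bar{H}^{s+1}$ with $s>-\frac12-4m\Im(\sigma_0)$, so $u\in H^{s+1}$ near $2m$ with $s+1>\frac12-4m\Im(\sigma_0)$. The singular Frobenius solution $(r-2m)^{4mi\sigma_0}\times(\text{analytic})$ lies in $H^{t}$ only for $t<\frac12-4m\Im(\sigma_0)$. Hence, after possibly adding a logarithmic term when the roots differ by an integer, $u$ must be a multiple of the regular Frobenius solution near $r=2m$. In particular $u$ is real analytic on a full neighborhood of $r=2m$ in $(r_0,\infty)$. The same conclusion follows from the radial point estimates of \cite{vasy_KdS} combined with analyticity of the coefficients.

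The step requiring the most care is the case where the indicial roots differ by a nonnegative integer, i.e.\ $4mi\sigma_0\in\N_0$. Here the second solution carries a logarithm and the Frobenius analysis must be redone. However, $4mi\sigma_0\in\N_0$ forces $\sigma_0\in -i\N_0/(4m)$, so $\arg(\sigma_0)=-\pi/2$. This lies outside $(-\beta,\pi-\beta)$ for small $\beta$, so the case does not arise. Otherwise the solution space near $2m$ is spanned by the regular solution and the singular one, and the Sobolev threshold excludes the singular one.

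Finally, $u$ is analytic near $2m$ and vanishes on $(2m,2m+\epsilon)$, so it vanishes on a neighborhood of $2m$. Away from $r=2m$, both in $(r_0,2m)$ and along $I_\beta$ (including the complex scaling region, where the operator is a regular ODE with nonvanishing leading coefficient), standard ODE uniqueness propagates $u\equiv0$. This contradicts $u\neq0$.
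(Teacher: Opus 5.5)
Your ODE/Frobenius strategy is a legitimate and more elementary alternative to the paper's route. The paper gets $u\in C^\infty$ from radial estimates, analyticity at $r=2m$ from the Galkowski--Zworski theorem for Keldysh operators, and then analytic continuation into $r<2m$. However, your argument has a genuine gap at exactly the point where the lemma has content: crossing the regular singular point $r=2m$.

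Frobenius theory and your Sobolev-threshold argument only describe the restrictions of $u$ to $(2m-\epsilon,2m)$ and $(2m,2m+\epsilon)$. They give $u=a_\pm u_{\mathrm{reg}}$ on the two sides. Since the leading coefficient vanishes at $r=2m$, nothing you say forces $a_+=a_-$, or excludes a component of $u$ supported at $\{r=2m\}$. So the conclusion that $u$ is a multiple of the regular solution on a full neighborhood does not follow.

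This is not a formality. Write $y=r-2m$ and $rP^{\hor}_{\ell,\beta}(\sigma_0)=yD_y^2+a(y)D_y+b(y)$ with $a(0)=-i-4m\sigma_0$. Then one computes
\[ rP^{\hor}_{\ell,\beta}(\sigma_0)\bigl(u_{\mathrm{reg}}\mathbf{1}_{\{y<0\}}\bigr)=-4mi\sigma_0\,u_{\mathrm{reg}}(0)\,\delta(y), \qquad rP^{\hor}_{\ell,\beta}(\sigma_0)\,\delta^{(k)}=(k+1+4mi\sigma_0)\,\delta^{(k+1)}+\bigl(\text{terms in }\delta^{(j)},\ j\le k\bigr). \]
Consider $\sigma_0=i\frac{k+1}{4m}$. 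This lies on the positive imaginary axis, inside the allowed sector, and is the integer-difference case $4mi\sigma_0\in-\N$ that your discussion omits. There one can solve recursively for a nonzero $u=\sum_{j\le k}c_j\delta^{(j)}+a_-u_{\mathrm{reg}}\mathbf{1}_{\{y<0\}}$, continued by the ODE into $r<2m$. It satisfies $P^{\hor}_{\ell,\beta}(\sigma_0)u=0$ and $\supp u\subset\{r\le 2m\}$. Its one-sided restrictions are $0$ and $a_-u_{\mathrm{reg}}$, both regular, so your one-sided threshold argument cannot see it. It is excluded only because $\delta^{(k)}\notin H^{t}$ for $t\ge -\frac12-k=\frac12-4m\Im\sigma_0$.

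To close the gap you need a genuinely two-sided argument at $r=2m$. There are two ways to do it.

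\emph{Via smoothness.} Follow your own aside and first use the radial estimates to get $u\in C^\infty$ near $r=2m$, as the paper does. Continuity at $r=2m$ together with $u_{\mathrm{reg}}(2m)=1$ (valid because $4mi\sigma_0\notin\N$) then gives $a_+=a_-$. The convergent Frobenius series gives analyticity, replacing the Galkowski--Zworski theorem. Note that analyticity of the coefficients plus smoothness alone does not give this at a degenerate point.

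\emph{Staying with distributions.} Write $u-a_+u_{\mathrm{reg}}\mathbf{1}_{\{y>0\}}-a_-u_{\mathrm{reg}}\mathbf{1}_{\{y<0\}}=\sum_{j\le K}c_j\delta^{(j)}$. The top-order term of $rP^{\hor}_{\ell,\beta}(\sigma_0)u$ forces $c_K(K+1+4mi\sigma_0)=0$. If instead $4mi\sigma_0=-(K+1)$, the $\delta^{(K)}$ term contradicts $u\in H^{s+1}$. Hence all $c_j=0$, and then $(a_+-a_-)\,\sigma_0\,u_{\mathrm{reg}}(0)=0$ gives $a_+=a_-$; here $\sigma_0\neq0$ is essential. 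When $\Im\sigma_0\le0$, the case relevant for QNMs, $s+1>\frac12$ and plain continuity of $u$ suffices, but the lemma is stated for the whole sector.

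With either fix, your remaining steps are fine: ODE regularity and uniqueness on $(r_0,2m)$, where $\mu\neq0$. The resulting proof is more elementary than the paper's. The paper's microlocal argument, however, does not depend on the ODE structure.
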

\begin{proof}
    Since $s+1$ is above the threshold for high regularity radial point estimates over the horizon, microlocal radial, propagation and elliptic estimates show that $P^\hor_{\ell,\beta}(\sigma_0)u=0$ implies $u\in C^\infty(I_\beta)$, see for instance \cite[Remark 3.10]{stucker_QNM_Kerr}.
    In a neighborhood of $r=2m$, we have
    \[P^\hor_{\ell,\beta}(\sigma_0) = r^{-2}D_rr^2\mu D_r - \frac{4m}{r}\sigma D_r + \frac{\ell(\ell+1)}{r^2} - \Bigl(1+\frac{2m}{r}\Bigr)\sigma^2 + i\frac{2m}{r^3}\sigma,\]
    see \eqref{wave_op_t_*}, where we set $h(r)=1$ near the horizon. Working with the coordinate $y=r-2m$, we find
    \[rP^\hor_{\ell,\beta}(\sigma_0) = yD_y^2 + a(y)D_y + b(y),\]
    with $a,b$ analytic in a neighborhood of $y=0$. For such a generalized Keldysh operator, \cite[Thm.\ 1]{galkowski_zworski_keldysh} shows that $P^\hor_{\ell,\beta}(\sigma_0)u =0$ for $u$ smooth implies that $u$ is analytic in a neighborhood of $r=2m$, see also \cite{petersen_vasy_analyticity}. Propagation of analytic regularity in $\{r<2m\}$, see for instance \cite[Thm.\ 2.9.1]{hitrik_sjostrand_minicourse}, then shows that $u$ is analytic in $r\in(r_0,2m+\epsilon)$ for some $\epsilon>0$. Thus, $\supp(u)\subset [r_0,2m]$ would imply that $u=0$.
\end{proof}

We now consider the dual resonant states $\varphi_j$ and show that their support is contained in $\{r\geq 2m\}$. This is essentially a consequence of the finite speed of propagation for the Schwarzschild wave equation. 
\begin{lemma}
\label{lemma_dual_resonance_support}
    Let $\{\varphi_j\}$ be as in Lemma \ref{lemma_laurent_expansion}. Then $\supp(\varphi_j) \subset \{r\geq 2m\}$ for each $j$.
\end{lemma}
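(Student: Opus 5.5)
The plan is to show that each dual resonant state $\varphi_j$ annihilates every test function supported in $\{r<2m\}$. The argument is an energy/uniqueness argument for the adjoint equation inside the black hole, using that $\{r<2m\}$ lies to the future of a spacelike hypersurface.

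Fix $j$ and set $\varphi=\varphi_j\in\dot H^{-s}(I_\beta)$. By Lemma \ref{lemma_laurent_expansion} it satisfies $P^\hor_{\ell,\beta}(\sigma_0)^*\varphi=0$. Near $r\le 2m$ no complex scaling takes place, so $P^\hor_{\ell,\beta}(\sigma_0)^*$ is an honest differential operator on $(r_0,2m+\epsilon)$. It is the spectral family of the adjoint wave operator, a wave operator with the time direction reversed, after conjugation by $e^{i\bar\sigma_0 t_*}$. In the region $r<2m$ the function $r$ is a time function: $dr$ is timelike there, so the surfaces $\{r=\text{const}\}$ are spacelike.

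Since $\varphi$ is a supported distribution, it vanishes for $r<r_0$, i.e.\ beyond the artificial boundary. The key step is a unique continuation / finite speed of propagation statement for the ODE on the interval. After restricting to $Y_{\ell,m}$, the equation $P^\hor_{\ell,\beta}(\sigma_0)^*\varphi=0$ on $(r_0-\eta,2m)$ is a second order ODE in $r$ with smooth coefficients. Its leading coefficient is a multiple of $\mu(r)$, which is nonvanishing for $r<2m$. Thus on $(r_0-\eta,2m)$ the operator is a regular ODE.

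A distributional solution of a regular linear ODE on an interval is smooth. Since $\varphi$ vanishes on $(r_0-\eta,r_0)$, uniqueness of the Cauchy problem forces $\varphi\equiv 0$ on $(r_0-\eta,2m)$. Hence $\supp\varphi\subset\{r\ge 2m\}$.

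I expect the main obstacle to be at the boundary point $r_0$. There one has to be careful that the equation $P^*\varphi=0$ holds in the sense of supported distributions on all of a neighbourhood of $r_0$, including across $r_0$. This requires checking that the pairing $\langle\varphi,Pg\rangle=0$ holds for all $g\in\bar H^{s+2}$, which is exactly what Lemma \ref{lemma_laurent_expansion} gives; in particular it includes test functions that do not vanish at $r_0$. So $P^*\varphi=0$ as a distribution on an extension $(r_0-\eta,\infty)$, with $\varphi=0$ for $r<r_0$.

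Then only regularity is needed. Since $\mu\neq0$ near $r_0$, the ODE is elliptic in the one-dimensional sense there, so $\varphi$ is smooth there and ODE uniqueness applies. Equivalently, in spacetime language this is finite speed of propagation with respect to the time function $r$ for the backward-in-time wave equation.

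If the leading coefficient in $t_*$-coordinates degenerates only at $r=2m$, the argument is complete. The only possible support of $\varphi$ is therefore at or outside the horizon, and it remains open (for Lemma \ref{lemma_no_delta_sol}) whether $\varphi$ could be a distribution supported exactly at $r=2m$.
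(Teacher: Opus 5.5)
Your argument is correct, and it takes a genuinely different route from the paper's.

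\textbf{The paper's route.} The paper works in spacetime. By finite speed of propagation, a forcing term supported in $\{r<2m\}$ produces a solution that vanishes in $\{r>2m\}$. It then Fourier transforms, meromorphically continues this vanishing property, and uses holomorphy of $P^\hor_{\ell,\beta}(\sigma)^{-1}f$ near the scaling region to get $\supp(P^\hor_{\ell,\beta}(\sigma)^{-1}f)\subset\{r\leq 2m\}$. A contour integral around $\sigma_0$ then gives $\sum_j\pair{\varphi_j}{f}u_j=0$ in $\{r>2m\}$. To conclude $\pair{\varphi_j}{f}=0$, it invokes Lemma \ref{lemma_keldysh}, i.e.\ analyticity of resonant states at the horizon via Galkowski--Zworski, so that the $u_j$ stay linearly independent after restriction to $\{r>2m\}$.

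\textbf{Your route.} You work with the dual equation directly. As you correctly note, \eqref{dual_resonant_states} comes from pairing against all of $\Bar{H}^{s+2}(I_\beta)$. Hence $P^\hor_{\ell,\beta}(\sigma_0)^*\varphi_j=0$ holds as distributions on an extension across $r_0$, where $\varphi_j$ vanishes. Your conditional about the leading coefficient is settled by \eqref{wave_op_t_*}. The $\p_{t_*}\p_r$ term becomes first order after Fourier transform, so the $\p_r^2$-coefficient is $-\mu$ for any choice of $h$, and this is nonzero on $(0,2m)$. Elliptic regularity and Cauchy uniqueness for the ODE on $(r_0-\eta,2m)$ then finish the proof.

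\textbf{What each buys.} Your proof is shorter and purely local near $[r_0,2m)$. It avoids the causal solution operator, the continuation of \eqref{finite_speed}, the holomorphic extension through the scaling region, and the analyticity input of Lemma \ref{lemma_keldysh}; the paper still needs that lemma elsewhere, to get $\chi u_j\neq 0$. In exchange, the paper's argument uses only the causal structure and black-box properties of the resolvent, so it carries over without separation of variables, e.g.\ to Kerr. There your ODE step would have to be replaced by a hyperbolic uniqueness (energy) argument for the spectral family in $\{r<2m\}$, with $r$ as time function. That is essentially what the heuristic in your first paragraph describes.
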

\begin{proof}
    Consider a forcing term $w\in C_c^\infty(\R_{t_*}\times[r_0,\infty)_r\times\Sph^2_\omega)$ with support inside the horizon: $\supp(w)\subset \{t_*\in (0,T),\, r< 2m\}$ for some $T>0$. Let $v$ be the unique solution to the inhomogeneous wave equation on Schwarzschild:
    \[\Box_gv = w, \quad v=0 \,\text{ in }\, t_*<0.\]
    Then by finite speed of propagation, $v$ is identically zero outside the horizon: $v(t_*,r,\omega) = 0$ for all $r>2m$ and all $t_*,\omega$. Taking $\Im(\sigma)\gg 0$, we can Fourier transform the wave equation in time, leading to $P^\hor(\sigma)\hat{v}(\sigma,r,\omega) = \hat{w}(\sigma,r,\omega)$, where
    \[\hat{v}(\sigma,r,\omega) = \int_\R e^{i\sigma t_*}v(t_*,r,\omega)\,dt_*, \quad \hat{w}(\sigma,r\omega) = \int_\R e^{i\sigma t_*}w(t_*,r,\omega)\,dt_*.\]
    Note that the convergence of the above integrals for large $\Im(\sigma)$ follows from simple energy estimates.
    As $P^\hor(\sigma)$ is invertible for $\Im(\sigma)\gg 0$, we find
    \[\hat{v}(\sigma,r,\omega) = P^\hor(\sigma)^{-1}\hat{w}(\sigma,r,\omega) = 0, \quad\text{on } \{r>2m\}.\]
    Since this holds for any forcing term $w$ with support in $r<2m$, we have for all $\Im(\sigma)\gg 0$:
    \begin{equation}
    \label{finite_speed}
        u\in \Bar{H}^s(X),\,\, \supp(u)\subset\{r<2m\} \,\,\implies\,\, \chi^\hor P^\hor(\sigma)^{-1}\chi^\hor u = 0 \,\,\text{ in } \{r>2m\},
    \end{equation}
    where we introduced an arbitrary cutoff function $\chi^\hor \in C_c^\infty([r_0,\infty))$ with $\chi=1$ on $r\leq R$ (note that $\chi^\hor u = u$). By the meromorphic continuation of the cutoff resolvent, \eqref{finite_speed} continues to hold for all $\sigma\in\C\setminus\{0\}$ away from the poles.
    
    Applying this to $u=fY_{\ell,m}$ for $f\in \Bar{H}^s(I_\beta)$ with $\supp(f)\subset[r_0,2m)$ shows that the complex scaled resolvent at angular momentum $\ell$ satisfies $P^\hor_{\ell,\beta}(\sigma)^{-1}(f) = 0$ for $r\in(2m,R)$, where $R<R_1$ -- the start of the complex deformation. Near the complex scaling region, $P^\hor_{\ell,\beta}(\sigma)^{-1}(f)$ extends to a holomorphic function on a neighborhood of $f_\beta((R_0,\infty)) \subset I_\beta \subset \C$ for some $R_0<R_1$, see the proof of \cite[Prop.\ 3.12]{stucker_QNM_Kerr}. Thus, choosing $R_0<R<R_1$, we find that in fact $P^\hor_{\ell,\beta}(\sigma)^{-1}(f) = 0$ on $f_\beta((2m,\infty))\subset I_\beta$. That is,
    \[\supp(P^\hor_{\ell,\beta}(\sigma)^{-1}(f)) \subset \{r\leq 2m\}, \quad \forall\, f\in \Bar{H}^s(I_\beta) \text{ with } \supp(f)\subset \{r<2m\}.\]
    
    With $f$ as above, we can now integrate around the pole at $\sigma=\sigma_0$ to find
    \[\sum_{k=1}^{M_J}\pair{\varphi_j}{f}u_j = \frac{1}{2\pi i}\int_{\gamma_0}(\sigma-\sigma_0)^{J-1}P^\hor_{\ell,\beta}(\sigma)^{-1}f\,d\sigma = 0, \quad\text{on } \{r>2m\}.\]
    Since the $u_j$ are linearly independent and not identically zero in $\{r>2m\}$ by Lemma \ref{lemma_keldysh}, we must have $\pair{\varphi_j}{f} = 0$ for all $j$. As this holds for any $f$ with $\supp(f)\subset \{r<2m\}$, the lemma follows.
\end{proof}

For a pole cancellation to occur when cutting off the resolvent with $\chi\in C_c^\infty((2m,\infty)_r)$, we must have $\chi u_j = \chi\varphi_j = 0$ for the resonant states and dual resonant states of Lemma \ref{lemma_laurent_expansion}. We have already seen that $\chi u_j\neq 0$ when $\chi$ has large enough support and it remains to consider the dual resonant states. Assume that $\chi\varphi_j = 0$ for all $\chi$ supported in the exterior region. Then $\supp(\varphi_j)\subset\{r\leq 2m\}$ is contained inside the horizon, so by the previous lemma $\varphi_j$ is supported precisely on the horizon. It remains to exclude this case. The calculation is somewhat similar to \cite[\S III.B]{hintz_xie_dual_resonances}.

\begin{lemma}
\label{lemma_no_delta_sol}
    Let $\sigma_0,s$ be as in Lemma \ref{lemma_laurent_expansion}. If $\varphi\in \dot{H}^{-s}(I_\beta)$ satisfies $P^\hor_{\ell,\beta}(\sigma_0)^*\varphi = 0$ and $\supp(\varphi) = \{2m\}$, then $\varphi=0$.
\end{lemma}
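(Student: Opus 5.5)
Since $\supp(\varphi)=\{2m\}$, a distribution supported at a point is a finite combination of derivatives of the delta function. I would write
\[\varphi = \sum_{k=0}^K c_k\,\delta^{(k)}(r-2m),\qquad c_K\neq 0,\]
and argue by contradiction, assuming $K\geq 0$ is the top order. Note that $\varphi\in\dot H^{-s}$ with $s$ large puts no restriction on $K$ beyond $K<s-\frac12$.

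Near the horizon I would use the form of the operator already computed in the proof of Lemma \ref{lemma_keldysh}. With $y=r-2m$ and $h(r)=1$ there, $rP^\hor_{\ell,\beta}(\sigma_0)=yD_y^2+a(y)D_y+b(y)$. Rather than working with $rP$, I would apply the formal adjoint of $P^\hor_{\ell,\beta}(\sigma_0)$ itself, with respect to the volume form $r^2\,dr$ (or whichever pairing defines $P^*$; a smooth nonvanishing density factor only conjugates by a smooth function and does not affect the argument). The adjoint is again of the form
\[P^* = \alpha(y)\bigl(y\,\partial_y^2\bigr) + \beta(y)\,\partial_y + \gamma(y),\]
with smooth coefficients and $\alpha(0)\neq 0$. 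The key quantity is $\beta(0)$, which is determined by the first order coefficient $-\frac{4m}{r}\sigma D_r$ together with the contributions coming from taking the adjoint of $y D_y^2$.

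Next I would compute the action on the top term using $y\,\delta^{(k)}=-k\,\delta^{(k-1)}$. This gives
\[y\,\partial_y^2\,\delta^{(K)} = y\,\delta^{(K+2)} = -(K+2)\,\delta^{(K+1)},\]
so the coefficient of the highest-order delta $\delta^{(K+1)}$ in $P^*\varphi$ is
\[c_K\bigl(-(K+2)\alpha(0)+\beta(0)\bigr),\]
up to the precise normalization coming from $\alpha$. Lower-order terms in $\varphi$ and Taylor corrections of the coefficients produce only $\delta^{(j)}$ with $j\leq K$. Hence $P^*\varphi=0$ forces $\beta(0)/\alpha(0)=K+2$, or the analogous integer condition after careful bookkeeping.

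The main obstacle is computing this indicial relation exactly and showing it fails. I expect the relevant quantity to have the form $(\text{integer}) + 4m i\sigma_0$ times a constant. For $\sigma_0\neq 0$ with $\arg(\sigma_0)\in(-\beta,\pi-\beta)$, either $\Re\sigma_0\neq 0$, making the quantity non-real, or $\sigma_0$ lies on the imaginary axis. In the second case, equality with an integer $K+2$ would require $-4m\Im\sigma_0$ to equal a specific integer, and I would use the regularity condition $s>-\frac12-4m\Im\sigma_0$ together with $\varphi\in\dot H^{-s}$, which requires $K<s-\frac12$, to rule this out. In fact the regularity threshold is designed exactly so that the delta-type dual states, which are the analogue of the non-smooth outgoing solutions at the radial set, lie below it. So I would track the sign carefully: the constraint should read $K+2=c-4mi\sigma_0$, and $\dot H^{-s}$ membership should contradict it.

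If needed, I would verify the computation with the conjugation by $r$ and the exact form of $P^\hor$ near $r=2m$ from \eqref{wave_op_t_*}. Once the leading coefficient is shown to be nonzero, $c_K=0$, contradicting the choice of $K$, so $\varphi=0$.
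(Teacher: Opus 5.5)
Your first step matches the paper's: expand $\varphi=\sum_{k\le K}c_k\delta^{(k)}(y)$ and read off the coefficient of $\delta^{(K+1)}$. Your formula $c_K(\beta(0)-(K+2)\alpha(0))$ is correct. In the paper's normalization $r^2P^\hor_{\ell,\beta}(\sigma_0)^*$ it equals $2m(K+1+4im\bar\sigma_0)c_K$. (The paper first conjugates to $h=0$ near the horizon; a smooth nonvanishing factor does not change this coefficient.) So $c_K\neq0$ forces $\sigma_0=-i\frac{K+1}{4m}$.

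The gap is the step meant to exclude this value. For $\sigma_0=-i\frac{K+1}{4m}$, the hypothesis $s>-\frac12-4m\Im(\sigma_0)$ reads $s>K+\frac12$. That is exactly the condition for $\delta^{(K)}\in\dot H^{-s}$. The two constraints coincide rather than conflict, so the regularity assumption gives no contradiction. The threshold puts $\Bar{H}^{s+1}$ above threshold, so the dual space $\dot H^{-s}$ lies below it. That is precisely where such singular dual states are permitted, so your heuristic runs in the wrong direction.

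The missing ingredient is the lower-order equations. Substituting $\sigma_0=-i\frac{K+1}{4m}$ into the coefficients of $\delta^{(k)}$, $0\le k\le K$, gives the three-term recursion \eqref{delta_system} for the $a_k$ (your $c_k$).
\begin{itemize}
\item For $K=0$ the system reduces to $\ell(\ell+1)+1=0$, which is impossible.
\item For $K\ge1$, the paper shows by downward induction from $k=K$ that $a_k\neq0$ and $b_k=2ma_{k-1}/a_k>\frac{k(K+1)}{K-k+1}$.
\item The $k=0$ equation, however, forces $b_1\le\frac{K+1}{K}$, a contradiction.
\end{itemize}
This argument uses only $\ell(\ell+1)\ge0$, not any information about $s$.

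Alternatively, the exceptional value has $\arg(\sigma_0)=-\frac{\pi}{2}$. If you use that the scaling angle satisfies $|\beta|<\frac{\pi}{2}$ (consistent with QNM being defined on $\C\setminus i\R_{\le0}$), then the hypothesis $\arg(\sigma_0)\in(-\beta,\pi-\beta)$ already excludes it. In that case your top-order computation would suffice. The paper does not take this route; it proves instead that no horizon-supported dual state exists even at these points.
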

\begin{proof}
    For convenience, we set $h(r)=0$ near the horizon in \eqref{wave_op_t_*}. That is to say, for $H\in C^\infty([r_0,\infty)_r)$ with $H'=h$, we replace $\varphi$ by $e^{i\sigma_0H}\varphi$. This still satisfies the hypotheses of the lemma but with the spectral family $P^\hor_{\ell,\beta}(\sigma_0)$ obtained from the wave operator in \eqref{wave_op_t_*} with $h=0$. The formal adjoint of this operator takes the following form near the horizon:
    \[P^\hor_{\ell,\beta}(\sigma_0)^* = -r^{-2}\p_r r^2\mu\p_r + r^{-2}\ell(\ell+1) + 2i\Bar{\sigma}_0(\p_r + r^{-1}).\]
    Rewriting this operator in terms of the coordinate $y=r-2m$ and gathering terms by homogeneity, we find
    \begin{equation*}
    \begin{split}
        r^2P^\hor_{\ell,\beta}(\sigma_0)^* = &-(y\p_y)^2-y\p_y + \ell(\ell+1) + 4im\Bar{\sigma}_0(2y\p_y + 1) \\
        &- 2m(\p_yy\p_y - 4im\Bar{\sigma}_0\p_y) + 2i\Bar{\sigma}_0(y^2\p_y + y),
    \end{split}
    \end{equation*}

    Assume now that $\varphi$ is a non-trivial dual resonant state supported on the horizon. Since the distribution $\varphi$ has support at the point $r=2m$, it is a finite sum of derivatives of delta functions:
    \begin{equation}
    \label{dual_resonance_ansatz}
        \varphi = \sum_{k=0}^N a_k\delta^{(k)}(y), \quad\text{for some } N\in\N_0 \text{ and } a_k\in\C \text{ with } a_N\neq 0.
    \end{equation}
    Setting $P^\hor_{\ell,\beta}(\sigma_0)^*\varphi = 0$ and using $\p_y\delta^{(k)} = \delta^{(k+1)}$, $y\delta^{(k)} = -k\delta^{(k-1)}$, we find
    \begin{equation*}
    \begin{split}
        \sum_{k=0}^N a_k\Bigl(&\bigl(\ell(\ell+1) - k(k+1) - 4i\Bar{\sigma}_0m(2k+1)\bigr)\delta^{(k)} \\ 
        &+ 2m\bigl(k+1+4i\Bar{\sigma}_0m\bigr)\delta^{(k+1)} + 2i\Bar{\sigma}_0k^2\delta^{(k-1)}\Bigr) = 0.
    \end{split}
    \end{equation*}
    Gathering the terms multiplying $\delta^{(k)}$ for each $0\leq k\leq N+1$ and setting them to zero, results in $N+2$ equations for the $a_k\in\C$, $\sigma_0\in\C$ and $\ell\in\N_0$. From the coefficient of $\delta^{(N+1)}$, we find
    \[(N+1+4im\Bar{\sigma}_0)a_N = 0 \quad\implies\quad \sigma_0 = -i\frac{N+1}{4m},\]
    since $a_N\neq 0$ by assumption. Thus, a dual resonant state supported on the horizon would be associated to a resonance on the negative imaginary axis. Using this value of $\sigma_0$ in the remaining $N+1$ equations, we find
    \begin{equation}
    \label{delta_system}
    \begin{split}
        2ma_{N-1} &= \bigl((N+1)^2 + \ell(\ell+1)\bigr)a_N, \\
        2m(N-k+1)a_{k-1} &= \bigl((k+1)(N+1) + k(N-k) + \ell(\ell+1)\bigr)a_{k} - \frac{N+1}{2m}(k+1)^2a_{k+1}, \\
        \bigl(N+\ell(\ell+1)\bigr)a_0 &= \frac{N+1}{2m}a_1,
    \end{split}
    \end{equation}
    where the second equation holds for all $k\in\{1,\dots,N-1\}$. It remains to show that there is no non-trivial solution $\ell\in\N_0$, $a_k\in\C$ to this system.

    Consider first the case $N=0$, i.e.\ $\varphi = a_0\delta(y)$ with $a_0\neq 0$. Then $P^\hor_{\ell,\beta}(\sigma_0)^*\varphi = 0$ implies that $\sigma_0=-i\frac{1}{4m}$ and $\ell(\ell+1)-4im\Bar{\sigma_0} = \ell(\ell+1) + 1 = 0$, which is impossible for $\ell\in\N_0$.
    Now, assume that $N\geq 1$. We claim that
    \[\forall\, k\in\{1,\dots,N\}:\quad a_k\neq 0, \quad\text{and}\quad b_k := \frac{2ma_{k-1}}{a_k} > \frac{k(N+1)}{N-k+1}.\]
    For $k=N$, we have $a_N\neq 0$ by assumption and the first equation in \eqref{delta_system} gives
    \[b_N = (N+1)^2 + \ell(\ell+1) \geq (N+1)^2 > N(N+1)\]
    as claimed. Now assume for some $k\in\{1,\dots,N-1\}$ that the claim is true for $k+1$. Then $b_{k+1} > 0$ implies $a_k\neq 0$ and dividing the second equation in \eqref{delta_system} by $a_k$ gives
    \[(N-k+1)b_k = \bigl((k+1)(N+1) + k(N-k) + \ell(\ell+1)\bigr) - (N+1)(k+1)^2b_{k+1}^{-1}\]
    Using $\ell\geq 0$ and $b_{k+1}^{-1} < \frac{N-k}{(k+1)(N+1)}$, by assumption on $b_{k+1}$, we find
    \begin{equation*}
    \begin{split}
        (N-k+1)b_k &> (k+1)(N+1) + k(N-k) - (k+1)(N-k) \\
        &= k(N+1) + k + 1 > k(N+1),
    \end{split}
    \end{equation*}
    which proves the claim for $k$. Thus, we arrive at 
    \[b_1 = \frac{2ma_0}{a_1} > \frac{N+1}{N}.\]
    On the other hand, the third equation in \eqref{delta_system} implies that
    \[\frac{2ma_0}{a_1} = \frac{N+1}{N+\ell(\ell+1)} \leq \frac{N+1}{N}, \quad \forall\,\ell\in\N_0,\]
    which provides the desired contradiction.
\end{proof}
\begin{rmk}
    At least in spherical symmetry, the proof of Lemma \ref{lemma_no_delta_sol} suggests that, generically, black hole -- respectively cosmological -- spacetimes have no dual resonant states supported on the event -- or cosmological -- horizon. Indeed, without loss of generality we can set $a_N=1$ in the ansatz \eqref{dual_resonance_ansatz} for the dual resonant state $\varphi$. Acting on $\varphi$ with the adjoint of the Fourier transformed wave operator gives $N+2$ equations in the $N+2$ unknowns $\{a_k\}_{k=0}^{N-1}$, $\sigma_0$ and $\ell$. Since $\ell$ is restricted to lie in $\N_0$, one would expect a solution to exist only in special cases.
\end{rmk}

\begin{proof}[Proof of Thm.\ \ref{thm_pole_cancellation}]
    By Lemma \ref{lemma_resolvent_relation}, for any $\chi\in C_c^\infty((2m,\infty)_r)$ the poles of $\chi P_\ell(\sigma)^{-1}\chi$ coincide with the poles of $\chi P^\hor_\ell(\sigma)^{-1}\chi = \chi P^\hor_{\ell,\beta}(\sigma)^{-1}\chi$ for an appropriately chosen complex scaling contour $I_\beta$. Now, assume that $\sigma_0$ is a pole of the complex scaled resolvent $P^\hor_{\ell,\beta}(\sigma)^{-1}$. Then Lemma \ref{lemma_laurent_expansion} gives
    \[\frac{1}{2\pi i}\int_{\gamma_0}(\sigma-\sigma_0)^{J-1}\chi P^\hor_\ell(\sigma)^{-1}\chi\,d\sigma = \sum_{j=1}^{M_J}\chi u_j\pair{\chi\varphi_j}{\,\cdot\,},\]
    for some linearly independent resonant states $u_j$ and dual resonant states $\varphi_j$. Choosing $\chi = 1$ on a large enough subset of $(2m,\infty)$, we have $\chi u_j \neq 0$ by Lemma \ref{lemma_keldysh}. Similarly, combining Lemma \ref{lemma_dual_resonance_support} and Lemma \ref{lemma_no_delta_sol} gives $\chi\varphi_j \neq 0$ for all $j$. Thus, the residue of $(\sigma-\sigma_0)^{J-1}\chi P^\hor_\ell(\sigma)^{-1}\chi$ is non-zero, i.e.\ the cutoff resolvent has a pole at $\sigma_0$.
\end{proof}

\printbibliography

\end{document}